\colorlet{green}{black!30!green} % <------------------------------this redefines green to not be so bright and harsh!
\tikzstyle directed=[postaction={decorate,decoration={markings,
    mark=at position #1 with {\arrow{>}}}}]
\tikzstyle rdirected=[postaction={decorate,decoration={markings,
    mark=at position #1 with {\arrow{<}}}}]
\tikzset{anchorbase/.style={baseline={([yshift=-0.5ex]current bounding box.center)}}}
\tikzset{
    partial ellipse/.style args={#1:#2:#3}{
        insert path={+ (#1:#3) arc (#1:#2:#3)}
    }
}
\theoremstyle{plain}
\newtheorem{theorem}{Theorem}
\newtheorem{theorem*}[theorem]{Theorem*}
\newtheorem{theorem**}[theorem]{Theorem*}
\numberwithin{theorem}{section}
\newtheorem{corollary*}[theorem]{Corollary*}
\newtheorem{corollary**}[theorem]{Corollary*}
\newtheorem{lemma*}[theorem]{Lemma*}
\newtheorem{lemma**}[theorem]{Lemma*}
\newtheorem{notation*}[theorem]{Notation*}
\newtheorem{proposition*}[theorem]{Proposition*}
\newtheorem{proposition**}[theorem]{Proposition*}
\theoremstyle{definition}
\newtheorem{definition}[theorem]{Definition}
\newtheorem{definition*}[theorem]{Definition*}
\newtheorem{definition**}[theorem]{Definition*}
\theoremstyle{remark}
\newtheorem{remark}[theorem]{Remark}
\newcommand{\numroman}{\renewcommand{\labelenumi}{\roman{enumi})}}
\newcommand{\numarabic}{\renewcommand{\labelenumi}{\arabic{enumi})}}
\newcommand{\numAlph}{\renewcommand{\labelenumi}{\Alph{enumi}.}}
\newcommand{\To}{\Rightarrow}
\newcommand{\TO}{\Rrightarrow}
\newcommand{\Hom}{{\rm Hom}}
\newcommand{\HOM}{{\rm HOM}}
\renewcommand{\to}{\rightarrow}
\newcommand{\maps}{\colon}
\newcommand{\op}{{\rm op}}
\newcommand{\co}{{\rm co}}
\newcommand{\iso}{\cong}
\newcommand{\id}{{\rm id}}
\newcommand{\bigb}[1]{
\begin{pspicture}(0,0)
 \rput(0,0){\psframebox[framearc=.5,fillstyle=solid]{\small $#1$}}
\end{pspicture}}
\newcommand{\del}{\partial}
\newcommand{\Res}{{\rm Res}}
\newcommand{\End}{{\rm End}}
\newcommand{\Aut}{{\rm Aut}}
\newcommand{\im}{{\rm im\ }}
\newcommand{\coim}{{\rm coim\ }}
\newcommand{\chr}{{\rm char\ }}
\newcommand{\coker}{{\rm coker\ }}
\newcommand{\Span}{{\rm Span}}
\newcommand{\spann}{{\rm span}}
\newcommand{\rk}{{\rm rk\ }}
\def\bigboxtimes{\mathop{\boxtimes}\limits}
\newcommand{\scs}{\scriptstyle}
\def\mf{\mathfrak}
\numberwithin{equation}{section}
\def\OC#1{\textcolor[rgb]{1.00,0.00,0.00}{[Orientability Comment: #1]}}%
\def\BC#1{\textcolor[rgb]{1.00,0.00,0.00}{[Boerner Relation Comment: #1]}}%
\def\comm#1{}%
\def\new#1{\b #1\e}%
\def\emph#1{{\sl #1\/}}
\def\ie{{\sl i.e. \/}}
\def\eg{{\sl e.g. \/}}
\def\Eg{{\sl E.g.\/}}
\def\etc{{\sl etc.\/}}
\def\cf{{\sl c.f.\/}}
\def\etal{\sl{et al.\/}}%
\def\vs{\sl{vs.\/}}%
\let\hat=\widehat
\let\tilde=\widetilde
\let\phi=\varphi
\let\theta=\vartheta
\let\epsilon=\varepsilon
\def\C{{\mathbbm C}}
\def\N{{\mathbbm N}}
\def\R{{\mathbbm R}}
\def\Z{{\mathbbm Z}}
\def\Q{{\mathbbm Q}}
\def\H{{\mathbbm H}}
\def\P{{\mathbbm P}}
\newcommand{\Bb}{\mathbb{B}}
\newcommand{\I}{\mathbb{I}}
\newcommand{\Ss}{\mathbb{S}}
\newcommand{\D}{\mathbb{D}}
\pgfplotsset{every axis/.append style={
                    axis x line=middle,    % put the x axis in the middle
                    axis y line=middle,    % put the y axis in the middle
                    axis line style={<->,color=blue}, % arrows on the axis
                    xlabel={$x$},          % default put x on x-axis
                    ylabel={$y$},          % default put y on y-axis
            }}
\begin{document}
\allowdisplaybreaks

% ==============================================================================
\title{Movie moves for framed foams from multijet transversality}

\author{Hoel Queffelec}
\address{IMAG\\ Univ. Montpellier\\ CNRS \\ Montpellier \\ France}
\email{hoel.queffelec@umontpellier.fr}

\author{Kevin Walker}
\address{Microsoft Station Q\\ Santa Barbara, California}
\email{kevin@canyon23.net}

\begin{abstract}
We use multijet transversality techniques to give a presentation by generators and relations of the categories of framed tangled webs and foams.
\end{abstract}

\maketitle
\maxtocdepth{subsection}
\tableofcontents

%%%%%%%%%%%%%%%%%%%%%%%%%%%%%%%%%%%%%%%%%%%%%%%%%%%%%%%%%%%%%%%%%
\section{Introduction}
%%%%%%%%%%%%%%%%%%%%%%%%%%%%%%%%%%%%%%%%%%%%%%%%%%%%%%%%%%%%%%%%%

In a beautiful book, Carter and Saito~\cite{CS_book} describe ways to represent surfaces embedded in higher-dimensional vector spaces, and their isotopies. This is to be understood as a higher dimensional analog of Reidemeister's theorem, stating that generically, a link can be represented by a planar diagram assembled from elementary pieces (namely, pieces of strands and crossings), and that an isotopy of the link translates into a succession of elementary moves. In the case of knotted surfaces, diagrams of links are replaced by ``movies'' (a sequence of link diagrams). 
An isotopy of the knotted surface then translates into a succession of elementary movie isotopies or ``movie moves'', for which Carter and Saito establish an exhaustive list.

The goal of the present paper is to extend this work to certain knotted singular surfaces (namely foams), 
which play a prominent role in most 
developments of quantum knot homologies.

\subsection{Functoriality of Khovanov homology}

Khovanov's early definition of a homological lift of the Jones polynomial~\cite{Kh1} extends to an invariant of knotted surfaces~\cite{Kh2}. 
This is most easily seen under Bar-Natan's reformulation of Khovanov homology~\cite{BN2}. 
Indeed, in this latter version, the definition of the invariant makes use of cobordisms between (crossingless) curves. 
That way, an unknotted surface is naturally assigned a morphism, and elementary cobordism generators between links (namely, Reidemeister moves) are assigned morphisms from the very proof of the invariance. 
One then needs to check that these assignments do not depend on the chosen isotopy representative, and this is where Carter and Saito's movie moves come into play. Considering the images under Khovanov's process of all those moves, it turns out that the process is only independent on the choice up to a global sign. Functoriality holds on $\Z/2\Z$, but not on $\Z$ (see for example~\cite[p.1480]{BN2}).

This functoriality defect was fixed first by Clark, Morrison and the second author~\cite{CMW} and Caprau~\cite{Cap}, using cobordisms carrying special lines, then by Blanchet~\cite{Blan} using singular cobordisms ($\mathfrak{gl}_2$ incarnations of foams).

\subsection{From knot cobordisms to web-tangles and foams}

The latter fix introduces the notion of foams (singular cobordisms between trivalent graphs) as an intermediate object in Khovanov's process, allowing for a sign adjustment. That way, the homology of a knot remains unchanged, but the homomorphism associated to a (classical) cobordism gets adjusted by a sign (inherited from the determinant representation of $\mathfrak{gl}_2$). Checking the Carter-Saito's movie moves yields a functorial theory.

The appearance of foams in the context of knot homologies goes back to Khovanov-Rozansky's categorification~\cite{Kh5,KhR3} (and Mackaay-Stosic-Vaz version of it~\cite{MSV}) of the Reshetikhin-Turaev's $\mathfrak{sl}_n$ invariants~\cite{RT}. In these works, they simply appear as the most natural version of cobordisms between certain trivalent graphs called webs. These webs, as introduced by Kuperberg~\cite{Kup}, can be understood as diagrammatic versions of the categories of representations that are used in the definition of Reshetikhin and Turaev's invariants. The interest in their study was reinforced by Cautis-Kamnitzer-Morrison's proof of a presentation for them~\cite{CKM}. An analogous presentation of the foam category was then provided by the first author with Rose~\cite{QR}.

The question of functoriality of $\mathfrak{sl}_n$ link homologies is next in line, and was successfully addressed by Ehrig, Tubbenhauer and Wedrich~\cite{ETW}.

However, having at hand this notion of webs, it is natural to consider tangled versions of these, and the definition of the Khovanov-Rozansky functors will extend (almost) for free. Such an extension finds further justification in skein approaches~\cite{Prz1,Tur} to generalizations of the invariants to 3-manifolds (see~\cite{QW_SkeinCat} to read more about our own interest in this). However, while the definition of the extended version of the functors is given almost for free, the next question of functoriality confronts an unexpected issue: there does not exist a complete list of movie moves for foams.

In this direction, the main reference is Carter's work~\cite{Carter_foams}, where he presents a list of moves for unframed embedded foams with no preferred vertical direction (see also the book of Carter and Kamada~\cite{CarterKamada}).

The goal of this paper is to fill this gap in the literature, adding both the vertical direction (because we want to be able to project down to link diagrams) and a notion of framing. This notion appears to us as necessary since naive attempts to prove functoriality in an unframed setting quickly run into contradictions (see~\cite[Section 2.3.2.2]{Queff_PhD}). One might imagine that a closer analysis in the spirit of~\cite{Vogel,EST} might avoid framing. In such a case one can easily deduce from our main theorem~\ref{th:mainHT} a complete list of framing-free moves.

\subsection{Multijet transversality}

In order to organize the analysis of all possible moves, we have chosen to use the framework of multijet transversality
(Section \ref{sec:multijet}). 
This entails working in the smooth setting.
A framed foam is represented by a smooth map into $\R^4$.
The space of all such smooth maps has a stratification, with strata corresponding to non-generic situations, such as
double (or triple or quadruple) points in the projection to $\R^2$, framing vectors pointing the vertical direction, and so on.
After a small perturbation the original framed foam (and its associated multijets) can be made transverse to this stratification.
The resulting transverse intersections lead to foam generators (caps, saddles, Reidemeister moves, and so on).
Similarly, a 1-parameter family (isotopy) of foams can be perturbed to be transverse to the stratification, and the resulting 
transverse intersections correspond to movie moves for framed foams.

\subsection{Results}

In Section~\ref{sec:multijet} we briefly recall the basics of multijet transversality.

In Section~\ref{sec:webs} we investigate the situation for framed webs and isotopies of them, and give two versions of a presentation, using either half twists in Theorem~\ref{th:ReidHTwist} or full twists in Theorem~\ref{th:ReidFT}. The first version might appeal more to a topologist, while the second one is designed for knotted web invariants.

In Section~\ref{sec:foams} we extend this analysis to the case of foams and isotopies of them. The main results of the paper are Theorems~\ref{th:mainHT} and~\ref{th:mainFT}, which again are two versions of a full list of movie moves for framed foams between knotted framed web-tangles represented by diagrams.

\settocdepth{section}
\subsection*{Acknowledgements}
We would like to warmly thank Paul Wedrich, who was part of the research project that led to this paper and contributed a lot to the ideas that we present here. H.Q. also would like to thank Scott Carter for generously sharing partial results about movie moves for foams (more than 10 years ago!).

\subsection*{Funding}
H.Q. received partial support from the CNRS-MSI partnership {\it FuMa} and the ANR grants {\it QUANTACT} and {\it CATORE} .

\settocdepth{subsection}

%%%%%%%%%%%%%%%%%%%%%%%%%%%%%%%%%%%%%%%%%%%%%%%%%%%%%%%%%%%%%%%%%%%%%%%%%%%%%%%%%%%%%% 
\section{Multijet transversality} \label{sec:multijet}
%%%%%%%%%%%%%%%%%%%%%%%%%%%%%%%%%%%%%%%%%%%%%%%%%%%%%%%%%%%%%%%%%%%%%%%%%%%%%%%%%%%%%%% 

The situation handled by multijet transversality goes as follows. (We rely on Golubitsky and Guillemin's textbook~\cite{GoGui}, in particular Chapters 2.2 and 4.) Let $X$ and $Y$ be smooth manifolds. Denote $J^k(X,Y)_{p,q}$ the set of equivalence classes for mappings $f:X\to Y$ with $f(p)=q$, where the equivalence relation is that $f\sim_k g$ if $f$ has $k$-th order contact with $g$ at $p$. This property is inductively defined as follows:
\begin{itemize}
\item if $k=1$, $(df)_p=(dg)_p$;
\item if $k>1$, $(df)_p$ and $(dg)_p$ have $(k-1)$-st order contact at every point in $T_pX$.
\end{itemize}
This amounts to asking that all partial derivatives of order up to $k$ agree.

Then one can form
\[
  J^k(X,Y)=\bigcup_{(p,q)\in X\times Y}J^k(X,Y)_{p,q}
\]
the elements of which are called $k$-jets from $X$ to $Y$. 
The set $J^k(X,Y)$ can be given the structure of a finite-dimensional smooth manifold in a natural way.

Given $f:X\to Y$ there is an associated $k$-jet $j^kf:X\to J^k(X,Y)$.

Now, consider
\[
  X^s=X\times \cdots \times X\;\text{and}\; X^{(s)}=\{(x_1,\dots, x_s)\in X^s|\forall i,j,\;x_i\neq x_j\}
\]
One has source maps
\[
  \alpha:J^k(X,Y)\mapsto X,\; \alpha^s:J^k(X,Y)^s\mapsto X^s
\]
and one can form the $s$-fold $k$-jet bundle:
\[
  J^k_s(X,Y)=(\alpha^s)^{-1}(X^{(s)})
\]
Given $f:X\to Y$ there is an associated $s$-fold $k$-jet map $j_s^k f:X^{(s)}\to J^k_s(X,Y)$.
The map $j_s^k f$ describes the behavior of $f$ up to order $k$ at $s$ distinct points of $X$.

Our main tool is the following theorem of John Mather (see~\cite[Proposition 3.3]{MatherV}) generalizing Thom's transversality theorems~\cite{Thom1,Thom2} (see also~\cite{Laudenbach} for a gentle introduction to the topic):
\begin{theorem}[Multijet transversality theorem, Theorem 4.13 in~\cite{GoGui}]
  Let $W$ be a submanifold of $J^k_s(X,Y)$. Let:
  \[
    T_W=\{f\in \mathcal{C}^{\infty}(X,Y)|j_s^kf \bar{\pitchfork} W\}.
    \]
    Then $T_W$ is a residual subset of $\mathcal{C}^{\infty}(X,Y)$. Moreover, if $W$ is compact, then $T_W$ is open.
  \end{theorem}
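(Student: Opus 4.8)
The plan is to adapt the proof of the single-jet Thom transversality theorem to the multijet setting; the only genuinely new feature is that the $s$ source points are required to be distinct, and this distinctness is exactly what allows perturbations near the different source points to be chosen independently, so the classical argument carries over with mostly bookkeeping overhead.

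First I would reduce to a local statement using the Baire property of $\mathcal{C}^\infty(X,Y)$ in the Whitney topology. A residual set is a countable intersection of open dense sets, so it suffices to cover $X^{(s)}$ by countably many products $U=U_1\times\cdots\times U_s$ of coordinate charts with \emph{pairwise disjoint closures} $\overline{U_i}$ (possible precisely because $X^{(s)}$ avoids all the diagonals, and because $X$ is second countable hence $\sigma$-compact), to cover $Y$ by charts, to choose exhausting compact blocks $K=K_1\times\cdots\times K_s\subseteq U$, and to prove that each local set
\[
  T_W^K=\{f\in\mathcal{C}^\infty(X,Y) : j_s^kf \pitchfork W \text{ at every point of }K\}
\]
is open and dense. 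Since transversality is automatic off $(j_s^kf)^{-1}(W)$ and the blocks $K$ cover $X^{(s)}$, one has $T_W=\bigcap_K T_W^K$, which is then residual.

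For local density I would use polynomial perturbations together with the parametric transversality theorem. Working in the chosen charts, with $U_i\subseteq\R^n$ and the target in $\R^m$, I attach to $f$ an independent perturbation near each chart: choosing bump functions supported in the disjoint $\overline{U_i}$ and letting $a=(a^{(1)},\dots,a^{(s)})$ range over coefficients of polynomials of degree $\le k$, set $F_a=f+\sum_i p^{(i)}_{a^{(i)}}$ and form
\[
  \Phi(x_1,\dots,x_s;a)=j_s^kF_a(x_1,\dots,x_s)\in J^k_s(X,Y).
\]
The key computation is that $\Phi$ is a submersion near $K$: at a single source point the coefficients of a polynomial of degree $\le k$ realize an arbitrary $k$-jet, and since the $x_i$ lie in disjoint charts these realizations are independent across the $s$ factors, so surjectivity of $d\Phi$ for the multijet reduces to the classical single-jet case on each factor. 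A submersion is transverse to $W$, so $\Phi\pitchfork W$, and the parametric transversality theorem gives that for almost every --- hence arbitrarily small --- parameter $a$ the map $j_s^kF_a$ is transverse to $W$ over $K$, proving density. Openness of $T_W^K$ is the standard fact that transversality over a \emph{compact} source block is preserved under small Whitney perturbations.

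For the final clause I would upgrade residuality to openness under the hypothesis that $W$ is compact: compactness confines the points where $j_s^kf$ can meet $W$ to a controlled region, so non-transversality becomes a closed condition and $T_W$ is open. The step I expect to be the main obstacle is not any single computation --- the submersion calculation is routine and parametric transversality is classical --- but the organization of the globalization: verifying that $X^{(s)}$ can be covered by products of charts with pairwise disjoint closures, that the independently supported perturbations assemble into a genuine global variation of $f$, and that the openness argument for compact $W$ interacts correctly with the possibly non-compact preimage $(j_s^kf)^{-1}(W)$. This bookkeeping, rather than any new idea, is what distinguishes the multijet proof from a verbatim copy of the single-jet one.
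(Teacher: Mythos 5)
First, a point of comparison: the paper does not prove this statement at all --- it quotes it as Theorem 4.13 of Golubitsky--Guillemin (attributed to Mather), so your proposal can only be measured against the standard textbook proof. Your outline is indeed that proof, and you correctly isolate the one genuinely multijet-specific point: because the source points are pairwise distinct, one can choose perturbations supported in disjoint chart neighborhoods, so the polynomial coefficients realize the $k$-jets at the $s$ points independently and the submersion computation for $\Phi$ reduces to the single-jet case factor by factor. The reduction to compact blocks, the parametric transversality lemma, and the countable intersection are all organized as in Golubitsky--Guillemin.

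There is, however, one step that fails as written: the claim that each local set $T_W^K=\{f : j_s^kf \pitchfork W \text{ at every point of } K\}$ is open. Transversality over a compact \emph{source} block is an open condition only when $W$ is closed (as a subset of $J^k_s(X,Y)$), and here $W$ is an arbitrary submanifold of the jet space. Concretely, a sequence $f_n\notin T_W^K$ can converge to $f\in T_W^K$: the witnesses $x_n\in K$ with $j_s^kf_n(x_n)\in W$ non-transverse may converge to a point $x$ with $j_s^kf(x)\in \overline{W}\setminus W$, where transversality holds vacuously. So your sets $T_W^K$ are dense but need not be open, and the countable-intersection argument does not yet yield residuality. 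The standard repair is to localize in the target as well as the source: cover $W$ by countably many open subsets $W_r$ whose closures $\overline{W_r}$ are compact and contained in $W$, and replace $T_W^K$ by
\[
  T^{K,r}_W=\{f\in\mathcal{C}^{\infty}(X,Y) : j_s^kf \pitchfork W \text{ at every } x\in K \text{ with } j_s^kf(x)\in \overline{W_r}\}.
\]
These sets are open (the limit jet now stays in $\overline{W_r}\subset W$, so the limiting argument goes through) and dense (by your perturbation argument), and their countable intersection equals $T_W$. Note that this is exactly the same phenomenon that forces the hypothesis in the final clause: it is the compactness of $W$ (ensuring offending jets cannot escape through $\overline{W}\setminus W$, and that their sources converge in $X^{(s)}$ via continuity of the source map) that makes $T_W$ itself open, and your sketch of that clause should be run through this limiting argument rather than the informal ``controlled region'' claim.
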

  Above $\bar{\pitchfork}$ is the notation for transverse intersection, and residual means that it is the countable intersection of open dense subsets. In the case of a Baire space (which $\mathcal{C}^{\infty}(X,Y)$ is), this implies that it is dense.

  The typical situation we will want to address using the above theorem is that of a mapping $f:F\mapsto R^4$ of a (suitably modified) foam into $\R^4$. The submanifold $W$ will be given by some condition we wish to avoid or control, for example having multiple points under the vertical projection. Then we apply the theorem to claim that up to minor adjustment $f$ can be made transverse to $W$, and then we go on to analyze what a local model is.

%%%%%%%%%%%%%%%%%%%%%%%%%%%%%%%%%%%%%%%%%%%%%%%%%%%%%%%%%%%%%%%%%%%%%%%%%%%%%%%%%%%%%%%%%%
\section{Reidemeister theorem for framed webs} \label{sec:webs}
%%%%%%%%%%%%%%%%%%%%%%%%%%%%%%%%%%%%%%%%%%%%%%%%%%%%%%%%%%%%%%%%%%%%%%%%%%%%%%%%%%%%%%%%%%

% ----------------------------------------------------------------------------------------
\subsection{Webs}
% ----------------------------------------------------------------------------------------

  \begin{definition}
    An abstract smooth web is a singular $1$-manifold locally smoothly diffeomorphic to either an interval or smooth realizations of the following trivalent graphs:
    \[
  \begin{tikzpicture}[anchorbase,decoration={markings, mark=at position 0.5 with {\arrow{>}}; }]
    \draw [semithick, postaction={decorate}] (0,0) to [out=90,in=-90] (1,1);
    \draw [semithick, postaction={decorate}] (2,0) to [out=90,in=-90] (1,1);
    \draw [semithick, postaction={decorate}] (1,1) to [out=90,in=-90] (1,2);
  \end{tikzpicture}
  \qquad
  \begin{tikzpicture}[anchorbase,decoration={markings, mark=at position 0.5 with {\arrow{<}}; }]
    \draw [semithick, postaction={decorate}] (0,0) to [out=90,in=-90] (1,1);
    \draw [semithick, postaction={decorate}] (2,0) to [out=90,in=-90] (1,1);
    \draw [semithick, postaction={decorate}] (1,1) to [out=90,in=-90] (1,2);
  \end{tikzpicture}
\]
In other words, restricting the chart maps to the $1$-manifolds formed by selecting one of the two legs and the other strand forms a smooth $1$-manifold.
We write $W_0$ for the union of the $0$-cells, and $W_1$ for the union of the $1$-cells.
  \end{definition}
Then one can turn an abstract web into  a 2-manifold $\tilde{W}$  by considering a $2$-manifold with boundary with preferred web in it, locally smoothly diffeomorphic to one of the following pieces:
\[
    \begin{tikzpicture}[anchorbase]
      \draw [semithick,->] (0,0) -- (0,1);
      \draw (-.3,0) rectangle (.3,1);
      \draw [blue,->] (.6,.5) -- (1,.5);
      \node [blue] at (1.2,.5) {\tiny $v$};
      \draw [blue,->] (.6,.5) -- (.6,.9);
      \node [blue] at (.6,1.1) {\tiny $u$};
      \node at (.5,-.1) {\tiny \vphantom{$u$}};
    \end{tikzpicture}
\qquad
  \begin{tikzpicture}[anchorbase,decoration={markings, mark=at position 0.5 with {\arrow{>}}; }]
    \draw (-.2,0) -- (.2,0) to [out=90,in=90] (.8,0) -- (1.2,0) to [out=90,in=-90] (.7,1) -- (.3,1) to [out=-90,in=90] (-.2,0);
    \draw [semithick, postaction={decorate}] (0,0) to [out=90,in=-90] (.5,.5);
    \draw [semithick, postaction={decorate}] (1,0) to [out=90,in=-90] (.5,.5);
    \draw [semithick, postaction={decorate}] (.5,.5) to [out=90,in=-90] (.5,1);
      \draw [blue,->] (1.1,.5) -- (1.4,.5);
    \node [blue] at (1.6,.5) {\tiny $v$};
    \draw [blue,->] (1.1,.5) -- (1.1,.8);
    \node [blue] at (1.1,1) {\tiny $u$};
    \node at (1.1,0) {\tiny \vphantom{$u$}};
  \end{tikzpicture}
\qquad
  \begin{tikzpicture}[anchorbase,decoration={markings, mark=at position 0.5 with {\arrow{<}}; }]
    \draw (-.2,1) -- (.2,1) to [out=-90,in=-90] (.8,1) -- (1.2,1) to [out=-90,in=90] (.7,0) -- (.3,0) to [out=90,in=-90] (-.2,1);
    \draw [semithick, postaction={decorate}] (0,1) to [out=-90,in=90] (.5,.5);
    \draw [semithick, postaction={decorate}] (1,1) to [out=-90,in=90] (.5,.5);
    \draw [semithick, postaction={decorate}] (.5,.5) to [out=-90,in=90] (.5,0);
      \draw [blue,->] (1.3,.2) -- (1.6,.2);
    \node [blue] at (1.8,.2) {\tiny $v$};
    \draw [blue,->] (1.3,.2) -- (1.3,.5);
    \node [blue] at (1.3,.7) {\tiny $u$};
  \end{tikzpicture}
\]

\begin{remark}
  Our asymmetric choice for trivalent vertices might sound surprising, but it allows us to reduce the number of moves that will appear, in particular for the tetrahedral vertices that will be introduced in the foam section. For example, the moves involving twists presented in~\cite[Chapter 10, Section 2]{CarterKamada} do not appear in our case, as they involve a symmetry of the trivalent vertex (the closest analogs in our context would be the last two relations in Theorem~\ref{th:mainHT}).
  Another reason for making such a choice is that it allows us to cover a web by smooth segments, or a foam by smooth disks. 
On a related note, one can erase some of the strands (or facets) and the remainder will still be a web (or foam). 
All of these are particularly useful when turning to functoriality proofs for Khovanov or Khovanov-Rozansky homologies (where labels of strands/facets break the symmetry anyway from the beginning).
\end{remark}

We now consider smooth maps:
\[f\colon \tilde{W}\mapsto \R^3\] One recovers the usual notion of a tangled web
by restricting $f$ to $W$. Furthermore, we get a framing on $f(W)$ by looking at
a section of the normal bundle of $W$ in $\tilde{W}$ (given by $\frac{\partial}{\partial v}$ in some parametric version for example).

\begin{remark}
Two notions of framed objects appear in the literature: either one considers a vector field (non-vanishing, with possibly extra conditions), or one considers actual ribbons. In what follows, we consider the first notion. However, $f$ could produce the former or the latter, and several functions $f$ could induce the same web with the same vector field. A deformation of $f$ clearly induces a deformation of the associated framed web. Conversely, given a framed web, one can build an associated function $f$ (extend $f$ from $W$ to $\tilde{W}$ by going along the framing vector, drawing a ribbon of given length; by compactness one can find such a non-zero length so that the image is embedded). Then an isotopy of webs comes from a deformation of functions.
\end{remark}

We will denote by $(x,y,z)$ coordinates in $\R^3$, and we have a preferred projection $\pi$ onto $\R^2=\{(x,y,0)\}$. We require that $f$ is injective on $W$. At the level of jet bundles, consider the union of submanifolds\footnote{As the web is not a manifold, we only get a union of submanifolds by considering all circles that cover the web.}:
\[
  \left\{
      (M_1,N_1,M_2,N_2)\; |\; M_1,M_2\in W,\; N_1=N_2\in \R^3
    \right\}
\]
This is a union of submanifolds of codimension $5$ (twice $1$ because we restrict $M_1$ and $M_2$ to $W$, and then $3$ because we want the three coordinates of $N_2$ to be equal to those of $N_1$. The graph:
\[\left\{
    \left(M_1,f(M_1),M_2,f(M_2)\right)
  \right\}
\]
is of dimension $4$. Thus generically, $f$ is injective on $W$. We also impose that $df$ is full rank on $W$ (so that the framing does not meet the strands). The condition on jet bundles for this condition not to be met is:
\[
  \left\{(M,N,D)\; |\; M\in W,\; N\in \R^3,\; D\in M_{3,2}(\R),\;rank(D)\leq 1\right\}
\]
This is an object of codimension $3$: $1$ for the restriction to $W$, and $2$ for the rank condition. Indeed, one of the two vectors can be chosen freely. Then the other one has to be proportional to it, leaving us with only one choice (the proportionality factor) for three coordinates.
The corresponding graph:
\[
  \left\{(M,f(M),df_M)\; |\; M\in \tilde{W} \right\}\]
is of dimension $2$. Again, the condition on the rank is generic.

\begin{definition}
We say that a function $f$ injective on $W$ and with $df$ full rank on $W$ represents a framed web with generic projection if the following conditions are fulfilled:
\begin{enumerate}
\item \label{c1} $\pi\circ f$ has isolated double points, which both pre-images lying in $W_1$;
\item \label{c2} such double points are transverse: the two vectors $\pi(\frac{\partial f}{\partial u})$ span $\R^2$;
\item \label{c3} $\frac{\partial f}{\partial u}$ is not vertical on $W$ (in other words, $\pi(\frac{\partial f}{\partial u}) \ne 0$ on $W$);
  \item except at isolated ({\it half twist}) points, we have: \begin{equation}\frac{\partial f}{\partial v}\notin \Span_{\R}\left(\frac{\partial f}{\partial u},\begin{pmatrix} 0 \\ 0 \\ 1\end{pmatrix}\right) \label{eq:framing}\end{equation}
  \item \label{c5} the above half twist points are transverse: 
    \[\frac{\partial^2 f}{\partial u \partial v}\notin \Span_{\R}\left(\frac{\partial f}{\partial u},\begin{pmatrix} 0 \\ 0 \\ 1\end{pmatrix}\right) \]
\end{enumerate}
\end{definition}

\begin{remark}
Let us say a word about the framing convention. Above we have considered that points are generic if the projection of the framing vector is not proportional to the tangent vector (and in particular non-zero). That way we isolate points where the framing goes over or under the strand. Later in the argument we will want to turn the framing into a preferred position, namely, lying under $\pi$ on the right hand side of the strands (with respect to orientation). To do so, one can choose a rotation around $\frac{\partial f}{\partial u}$ that brings $\frac{\partial f}{\partial v}$ into the desired position (so that it projects to right hand side, and maximizes the length of the projection). Our preferred choice of rotation is the one that doesn't make $\frac{\partial f}{\partial v}$ cross $\R\frac{\partial f}{\partial u}+\R^{-}\begin{pmatrix} 0 \\ 0 \\ 1\end{pmatrix}$. At the end of the process, we get a framed web with framing on the right except at isolated points where the framing turns around a strand (we call this a twist). This process only requires that we isolate those points where the framing passes under a strand (because of the $\R^-\begin{pmatrix} 0 \\ 0 \\ 1\end{pmatrix}$ condition). However, the process of bringing a framed web with generic projection to a framed web with right-handed framing introduces crossings (at trivalent points). We thus found it easier to isolate the cases where the framing passes under or over a strand (these situations are called half-twists). This produces Theorems~\ref{th:ReidHTwist} and~\ref{th:mainHT}. At the very end, we will apply our preferred rotation and deduce classification results for right-sided framed webs: this yields Theorems~\ref{th:ReidFT} and~\ref{th:mainFT}.
\end{remark}

Let us first find local models for generic points.

\subsubsection{Local model for generic $m\in W_1$}

Let $m\in W_1$ that is not a multiple point under $\pi$, and so that condition~\ref{c3} and Equation~\eqref{eq:framing} above hold.

Fix a local chart on $\tilde{W}$ so that $m=0$, and consider a neighborhood of $m$ of the form $[-u_1,u_1]\times [-v_1,v_1]$. Since $\frac{\partial f}{\partial u}$ is not vertical, one of its $x$ or $y$ coordinates at least is non-zero. Up to rotation around a vertical axis in the target 3-space, one can assume that:
\[
  \frac{\partial f}{\partial u}=\begin{pmatrix} a \\ 0 \\ b\end{pmatrix},\;a>0.
\]
Then upon post-action by a matrix in $GL_2\subset GL_3$ (where $GL_2$ acts on the $x$ and $y$ coordinates), one can reduce $df$ to be of the following kind: 
\[
  df=\begin{pmatrix}
    a & 0\\
    0 & c\\
    b & d\\
  \end{pmatrix}
\]
Because of condition~\eqref{eq:framing}, we have $c\neq 0$.

  Then around $m$, one can write:
\[
f(\epsilon_1,\epsilon_2)=f(m)+\begin{pmatrix} a \epsilon_1 \\ c \epsilon_2 \\ b\epsilon_1+d\epsilon_2 \end{pmatrix}+o(\epsilon_1,\epsilon_2).
\]

Setting $\epsilon_2=0$, this draws a portion of line in the $3$-space that projects into a portion of line on the $(x,y)$ plane (as $a\neq 0$). Now we can look at the framing. Recall $\left(\frac{\partial f}{\partial v}\right)_m=\begin{pmatrix}0 \\ c \\ d \end{pmatrix}$ with $c\neq 0$. Thus:
\[
  \left(\frac{\partial f}{\partial v}\right)_{(\epsilon_1,0)}=\begin{pmatrix} 0 \\ c \\ d \end{pmatrix}+o(\epsilon_1).
\]
This means that around $m$ at first order, the projection of the framing can be simply obtained by transporting the framing at $m$. Depending on the sign of $c$, we get as generator:
\[
  \begin{tikzpicture}[anchorbase]
    \fill [fill=red,fill opacity=.5] (0,0) -- (0,1) -- (.1,1) -- (.1,0);
    \draw [semithick,->] (0,0) -- (0,1);
  \end{tikzpicture}
  \quad \text{or}\quad
    \begin{tikzpicture}[anchorbase]
    \fill [fill=red,fill opacity=.5] (0,0) -- (0,1) -- (-.1,1) -- (-.1,0);
    \draw [semithick,->] (0,0) -- (0,1);
  \end{tikzpicture}
\]
Above we have indicated the framing by a red ribbon.

\subsubsection{Local model for $m\in W_0$}

Let us now look at $m\in W_0$ and run the same analysis. Conveniently, forgetting one of the two strands that arrive parallel brings us back to the previous situation, so we can make the same simplifications. Depending on the framing, this will bring us to one of the two following situations:
\[
    \begin{tikzpicture}[anchorbase,decoration={markings, mark=at position 0.5 with {\arrow{>}}; }]
      \draw [semithick, postaction={decorate}] (0,0) to [out=90,in=-90] (.5,.5);
      \fill [fill=red, fill opacity=.5] (0,0) to [out=90,in=-90] (.5,.5) -- (.4,.5) to [out=-90,in=90] (-.1,0);
      \draw [semithick, postaction={decorate}] (1,0) to [out=90,in=-90] (.5,.5);
      \fill [fill=red, fill opacity=.5] (1,0) to [out=90,in=-90] (.5,.5) -- (.4,.5) to [out=-90,in=90] (.9,0);
      \draw [semithick, postaction={decorate}] (.5,.5) to [out=90,in=-90] (.5,1);
      \fill [fill=red, fill opacity=.5] (.5,.5) to [out=90,in=-90] (.5,1) -- (.4,1) to [out=-90,in=90] (.4,.5);
    \end{tikzpicture}
    \qquad
    \begin{tikzpicture}[anchorbase,decoration={markings, mark=at position 0.5 with {\arrow{>}}; }]
      \draw [semithick, postaction={decorate}] (0,0) to [out=90,in=-90] (.5,.5);
      \fill [fill=red, fill opacity=.5] (0,0) to [out=90,in=-90] (.5,.5) -- (.6,.5) to [out=-90,in=90] (.1,0);
      \draw [semithick, postaction={decorate}] (1,0) to [out=90,in=-90] (.5,.5);
      \fill [fill=red, fill opacity=.5] (1,0) to [out=90,in=-90] (.5,.5) -- (.6,.5) to [out=-90,in=90] (1.1,0);
      \draw [semithick, postaction={decorate}] (.5,.5) to [out=90,in=-90] (.5,1);
      \fill [fill=red, fill opacity=.5] (.5,.5) to [out=90,in=-90] (.5,1) -- (.6,1) to [out=-90,in=90] (.6,.5);
    \end{tikzpicture}
\]
One can also consider analogous pictures with reversed orientations (split vertices).

\subsubsection{Conditions}

Before looking for local models for double points and places where the assumption from Equation~\ref{eq:framing} fails, let us first check that the assumptions we made on $f$ to represent a framed web are reasonable.

Requiring $\pi\circ f$ to have double points corresponds to the union of submanifolds in $J_{0}^2$:
\[
\left\{
(M_1,N_1,M_2,N_2), M_1\in W,M_2\in W,\pi(N_1)=\pi(N_2)
\right\}
\]
These manifolds are of codimension $4$ (we get $1+1$ by restricting $M_i\in W$, and 2 from the equation $\pi(N_1)=\pi(N_2)$).
The graph:
\[
  \left\{
    (M_1,f(M_1),M_2,f(M_2)
    \right\}
\]
is of dimension $4$, so one can make $f$ transverse to the submanifolds with isolated intersections.

Furthermore, if one requires $M_1$ or $M_2$ to lie on $W_0$, the codimension drops again by one and transverse intersections are empty.

We argue that no higher multiplicities will generically occur. Indeed, in $J_{k}^1$, the union of submanifolds corresponding to $k$ points being sent to the same projection has codimension $k+2(k-1)=3k-2$, while the graph is of dimension $2k$. At $k=2$ we have isolated singularities, at $k=3$ we have a drop of $1$ (so this will show up when passing to isotopies), and starting $k=4$ we fall into empty transverse intersections even when looking at $1$-parameter families of functions.

% Condition \ref{c3}  requiring the tangent vector to not be vertical corresponds to a union of submanifolds of codimension $3$ ($1$ from the restriction to $W$, $2$ for the vanishing of $x$ and $y$ coefficients) with a $2$-dimensional graph: it is generically fulfilled for a web (and we will see isolated points when we turn to time families).

% The condition from Equation \eqref{eq:framing} corresponds to a codimension $2$ object, so one expects isolated points on a web where this is not verified. Condition~\ref{c5} then increases the codimension by $1$: this is generically true for a web, and one will expect isolated points over time.

Let us now go to $J_{1}^2$ and consider non-transverse double points on the projection:
\[
\left\{
(M_1,N_1,D_1,M_2,N_2,D_2), M_1\in W,M_2\in W,\pi(N_1)=\pi(N_2),det(\begin{pmatrix} (D_1)_{1,1} & (D_2)_{1,1} \\ (D_1)_{2,1} & (D_2)_{2,1}\end{pmatrix})=0
\right\}
\]
This is a union of submanifolds of codimensions $5$, while the graph is still of dimension $4$: transverse intersections are empty.

Now, let us work in $J_{1}^{1}$ and consider the following submanifolds, that correspond to the failure of condition~\ref{c3}.
\[
  \left\{ (M,N,D),M\in W,D=\begin{pmatrix}0 & \cdot \\ 0 & \cdot \\ \cdot & \cdot \end{pmatrix} \right\}
\]
This is a union of codimension $3$ submanifolds, and the graph is of dimension $2$. We again have empty transverse intersection.

We now turn our attention to the framing. Failure of condition~\eqref{eq:framing} can be written as:
\[df=\begin{pmatrix}a & \lambda a \\ b & \lambda b \\ c & \lambda c+\mu \end{pmatrix}\]
One gets a union of codimension 2 ($1$ by restriction to $W$, $1$ for the rank condition) submanifolds. So one expects isolated transverse intersections. Notice that these intersections occur on $W_1$ (forcing them to be on $W_0$ drops the codimension by $1$). Condition~\ref{c5} also increases the codimension by $1$: it holds generically for a web, and one will expect isolated points over time where it doesn't hold.

Furthermore, notice that imposing failure of the above condition on top of the double point case also drops the codimension by $1$, and one can thus assume that these two situations arise at distinct places.

We will now go back to the above special situations and identify local models for them.

\subsubsection{Multiple points}

Let us first look at transverse double points in $\pi\circ f$. The situation is very classical : the local picture is determined by both derivatives (and transversality implies that they are not colinear) and looks like (the first one is a positive crossing, the second one a negative crossing):
\[
  \begin{tikzpicture}[anchorbase]
    \draw [semithick, ->] (0,0) -- (1,1);
    \draw [semithick] (1,0) -- (.6,.4);
    \draw [semithick, ->] (.4,.6) -- (0,1);
  \end{tikzpicture}
\quad \text{or} \quad
  \begin{tikzpicture}[anchorbase]
    \draw [semithick, ->] (1,0) -- (0,1);
    \draw [semithick] (0,0) -- (.4,.4);
    \draw [semithick, ->] (.6,.6) -- (1,1);
  \end{tikzpicture}
\]
As explained previously, the framing can be assumed to be non-singular under projection at all time, which we can emphasize as follows:
\[
  \begin{tikzpicture}[anchorbase]
    \draw [semithick, ->] (0,0) -- (1,1);
    \fill [fill=red, fill opacity=.5] (0,0) --(.1,-.1) -- (1.1,.9) -- (1,1);
    \draw [semithick] (1,0) -- (.65,.35);
    \fill [fill=red, opacity=.5] (1,0) -- (.65,.35) -- (.75,.45) -- (1.1,.1);
    \draw [semithick, ->] (.4,.6) -- (0,1);
    \fill [fill=red,opacity=.5] (.4,.6) -- (.5,.7) -- (.1,1.1) -- (0,1);
  \end{tikzpicture}
\quad \text{or} \quad
  \begin{tikzpicture}[anchorbase]
    \draw [semithick, ->] (0,0) -- (1,1);
    \fill [fill=red, fill opacity=.5] (0,0) --(-.1,.1) -- (.9,1.1) -- (1,1);
    \draw [semithick] (1,0) -- (.6,.4);
    \fill [fill=red, opacity=.5] (1,0) -- (.6,.4) -- (.7,.5) -- (1.1,.1);
    \draw [semithick, ->] (.35,.65) -- (0,1);
    \fill [fill=red,opacity=.5] (.35,.65) -- (.45,.75) -- (.1,1.1) -- (0,1);
  \end{tikzpicture}
\quad \text{or} \quad
  \begin{tikzpicture}[anchorbase]
    \draw [semithick, ->] (0,0) -- (1,1);
    \fill [fill=red, fill opacity=.5] (0,0) --(.1,-.1) -- (1.1,.9) -- (1,1);
    \draw [semithick] (1,0) -- (.65,.35);
    \fill [fill=red, opacity=.5] (1,0) -- (.65,.35) -- (.55,.25) -- (.9,-.1);
    \draw [semithick, ->] (.4,.6) -- (0,1);
    \fill [fill=red,opacity=.5] (.4,.6) -- (.3,.5) -- (-.1,.9) -- (0,1);
  \end{tikzpicture}
\quad \text{or} \quad
  \begin{tikzpicture}[anchorbase]
    \draw [semithick, ->] (0,0) -- (1,1);
    \fill [fill=red, fill opacity=.5] (0,0) --(-.1,.1) -- (.9,1.1) -- (1,1);
    \draw [semithick] (1,0) -- (.6,.4);
    \fill [fill=red, opacity=.5] (1,0) -- (.6,.4) -- (.5,.3) -- (.9,-.1);
    \draw [semithick, ->] (.35,.65) -- (0,1);
    \fill [fill=red,opacity=.5] (.35,.65) -- (.25,.55) -- (-.1,.9) -- (0,1);
  \end{tikzpicture}
\]
We have a similar list of cases for the negative crossing.

\subsubsection{Framing}

We now turn our attention to the framing issue. Assume that at some point $m\in W_1$ :
\[
  (df)_m=\begin{pmatrix} a & \lambda a \\ b & \lambda b \\ c & \lambda c +\mu \end{pmatrix}\; \mu\neq 0
\]
  
Up to rotation in the $x,y$ plane one can assume that $a>0$ and $b=0$, and up to adding to $v$ a multiple of $u$, one reduces to:
\[
  (df)_m=\begin{pmatrix} a & 0 \\ 0 & 0 \\ c &  \mu \end{pmatrix}\; a>0,\;\mu\neq 0
\]

If we go to $J_2^{1}$ and consider the union of submanifolds corresponding to simultaneously requiring:
\begin{itemize}
\item failure of the framing condition~\eqref{eq:framing};
\item that the $\frac{\partial^2}{\partial u \partial v}$ entry of the matrix corresponding to the second derivative has zero coordinate in the $y$ direction;
\end{itemize}
then the codimension increases again by one. This means that we can assume that the entry $\frac{\partial^2f}{\partial u \partial v}(m)\neq \begin{pmatrix} \cdot \\ 0 \\ \cdot\end{pmatrix}$. Let us write:
\[\frac{\partial^2f}{\partial u \partial v}(m)\neq \begin{pmatrix} \alpha \\ \beta \\ \gamma \end{pmatrix}, \; \beta \neq 0.\]

Up to symmetry, one can assume that $\beta >0$. Then by writing a Taylor expansion for $\frac{\partial f}{\partial v}$, one has:
\[
  \left(\frac{\partial f}{\partial v}\right)_{(\epsilon_1,0)}=\begin{pmatrix} \alpha \epsilon_1 \\ \beta \epsilon_1 \\ \mu+\gamma \epsilon_1 \end{pmatrix}+o(\epsilon_1)
\]
This means that just before $m$ on $W$, the framing was pointing to the right, while just after, it points to the left, passing at $m$ under the strand if $\mu<0$ and over the strand if $\mu>0$. We make this into the first and third of the following generators (for more clarity, we indicate the framing). The other two generators correspond to $\beta <0$.
\[
  \begin{tikzpicture}[anchorbase,rotate=-90] 
    \draw [semithick, ->] (0,0) -- (0,2);
    \fill [opacity=.5, fill=red] (-.1,0) -- (.1,2) -- (0,2) -- (0,0);
    \node at (0,1) {$\circ$};
    \node at (-.3,1) {\tiny $-1$};
    \draw [blue,->] (1,.5) --(1,.9);
    \node at (1,1.1) {\tiny $x$};
    \draw [blue,->] (1,.5) --(.6,.5);
    \node at (.4,.5) {\tiny $y$};
    \node at (2,1) {$\mu<0,\; \beta<0$};
  \end{tikzpicture}
  \qquad
  \begin{tikzpicture}[anchorbase,rotate=-90] 
    \draw [semithick, ->] (0,0) -- (0,2);
    \fill [opacity=.5, fill=red] (.1,0) -- (-.1,2) -- (0,2) -- (0,0);
    \node at (0,1) {$\circ$};
    \node at (-.3,1) {\tiny $+1$};
    \draw [blue,->] (1,.5) --(1,.9);
    \node at (1,1.1) {\tiny $x$};
    \draw [blue,->] (1,.5) --(.6,.5);
    \node at (.4,.5) {\tiny $y$};
    \node at (2,1) {$\mu<0,\; \beta>0$};
  \end{tikzpicture}
  \qquad
  \begin{tikzpicture}[anchorbase,rotate=-90] 
    \draw [semithick, ->] (0,0) -- (0,2);
    \fill [opacity=.5, fill=red] (-.1,0) -- (.1,2) -- (0,2) -- (0,0);
    \node at (0,1) {$\bullet$};
    \node at (-.3,1) {\tiny $+1$};
    \draw [blue,->] (1,.5) --(1,.9);
    \node at (1,1.1) {\tiny $x$};
    \draw [blue,->] (1,.5) --(.6,.5);
    \node at (.4,.5) {\tiny $y$};
    \node at (2,1) {$\mu>0,\; \beta<0$};
  \end{tikzpicture}
  \qquad
  \begin{tikzpicture}[anchorbase,rotate=-90] 
    \draw [semithick, ->] (0,0) -- (0,2);
    \fill [opacity=.5, fill=red] (.1,0) -- (-.1,2) -- (0,2) -- (0,0);
    \node at (0,1) {$\bullet$};
    \node at (-.3,1) {\tiny $-1$};
    \draw [blue,->] (1,.5) --(1,.9);
    \node at (1,1.1) {\tiny $x$};
    \draw [blue,->] (1,.5) --(.6,.5);
    \node at (.4,.5) {\tiny $y$};
    \node at (2,1) {$\mu>0,\; \beta>0$};
  \end{tikzpicture}
\]
We emphasize our notation: a $\circ$ sign means that the framing goes under the strand, while a $\bullet$ sign means that it goes over it. The sign indicated next to the framing change indicates whether the vector turns positively or negatively (with respect to the right hand rule).

% -------------------------------------------------------------------------------------
\subsection{Isotopies of webs}
% -------------------------------------------------------------------------------------

Let us now consider isotopies of webs by looking at families:
\[f\colon \tilde{W}\times [0,1]\mapsto \R^3\]
For $t\in [0,1]$, we denote by $f_t$ the corresponding thickened web embedding.

\begin{definition}
  We say that such a family is an isotopy if:
  \begin{enumerate}
  \item $\forall t\in [0,1]$, $f_t$ is injective on $W$;
  \item $\forall t\in [0,1]$, $\left(\frac{\partial f}{\partial u},\frac{\partial f}{\partial v}\right)$ is of rank $2$ on $W$.
  \end{enumerate}
\end{definition}
Both conditions fail at isolated times for generic families of functions. We thus only consider functions that have no such singular points, and notice that they are stable under small perturbations.

Looking again at the previous analysis, the codimensions remain unchanged but the dimension of the graph increases by one, so we have:
\begin{itemize}
\item isolated triple points;
\item a $1$-dimensional set of double points, that generically are transverse and lie on $W_1$; among which (both situations are mutually exclusive):
  \begin{itemize}
  \item isolated non transverse double points;
  \item isolated double points from $W_0\times W_1$;
  \end{itemize}
\item isolated points where $\frac{\partial f}{\partial u}$ is vertical on $W$;
\item a $1$-dimensional set of framing changes controlled by equation~\eqref{eq:framing}; among those (situations are mutually exclusive):
  \begin{itemize}
  \item isolated points where condition \ref{c5} does not hold;
  \item isolated points on $W_0$;
  \item isolated double points.
  \end{itemize}
\end{itemize}

We will analyze these situations in the following paragraphs.

\subsubsection{Triple points and non-transverse double points}

Triple points may only involve points from $W^{1}$ and thus correspond to the third Reidemeister move, as usual, while non-transverse double points also involve only points from $W^{1}$ and yield the second Reidemeister move. We will investigate the other situations a little further.

\subsubsection{Isolated double points involving a trivalent vertex}

Let us start with isolated double points involving $m_1\in W_1$ and $m_2\in W_0$. We consider the case where $m_2$ is a 
split vertex 
(the case of a merge can be run parallel). We can assume that the projections of the derivatives in the $u$ direction make the intersection transverse, so that, up to $GL_2$ action in the $(x,y)$ plane in the target space, one has:
\[
  \left(\frac{\partial f}{\partial u}\right)_{m_1}=\begin{pmatrix}a \\ 0 \\ b \end{pmatrix}\quad a>0 \quad,\quad  \left(\frac{\partial f}{\partial u}\right)_{m_2}=\begin{pmatrix}0 \\ c \\ d \end{pmatrix}\quad  c\neq 0.
\]
We consider the case when $c>0$. The other one is symmetric.
This controls the shape of the intersection as follows:
\[
  \begin{tikzpicture}[anchorbase]
    % strand
    \draw [semithick,->] (-1,0) -- (1,0);
    % web
    \draw [line width=3, white] (0,-1) -- (0,0); 
    \draw [line width=3, white] (0,0) to [out=90,in=-90] (1,1);
    \draw [line width=3, white] (0,0) to [out=90,in=-90] (-1,1);
    \draw [semithick] (0,-1) -- (0,0);
    \draw [semithick,->] (0,0) to [out=90,in=-90] (1,1);
    \draw [semithick,->] (0,0) to [out=90,in=-90] (-1,1);
    % chart
    \draw [blue, ->] (2,0) -- (2.5,0);
    \node at (2.7,0) {\tiny $x$};
    \draw [blue, ->] (2,0) -- (2,.5);
    \node at (2,.7) {\tiny $y$};
  \end{tikzpicture}
  \quad \text{or} \quad
  \begin{tikzpicture}[anchorbase]
    % web
    \draw [line width=3, white] (0,-1) -- (0,0); 
    \draw [line width=3, white] (0,0) to [out=90,in=-90] (1,1);
    \draw [line width=3, white] (0,0) to [out=90,in=-90] (-1,1);
    \draw [semithick] (0,-1) -- (0,0);
    \draw [semithick,->] (0,0) to [out=90,in=-90] (1,1);
    \draw [semithick,->] (0,0) to [out=90,in=-90] (-1,1);
    % strand
    \draw [line width=3, white] (-1,0) -- (1,0);
    \draw [semithick,->] (-1,0) -- (1,0);
    % chart
    \draw [blue, ->] (2,0) -- (2.5,0);
    \node at (2.7,0) {\tiny $x$};
    \draw [blue, ->] (2,0) -- (2,.5);
    \node at (2,.7) {\tiny $y$};
  \end{tikzpicture}
\]

The shape just before or just after in the time direction will be controlled by $\frac{\partial f}{\partial t}$. Again by a codimension argument, we can assume that both vectors have non zero $y$ coordinates, denoted $\eta_1$ for $m_1$ and $\eta_2$ for $m_2$. Then:
\begin{align*}
  f\left(m_1+\begin{pmatrix}\epsilon_1 \\ 0 \\ \delta_1 \end{pmatrix}\right)&= f(m_1)+\epsilon_1\begin{pmatrix}a \\ 0 \\ b \end{pmatrix}+\delta_1\begin{pmatrix} \cdot \\ \eta_1 \\ \cdot \end{pmatrix}
\end{align*}
so that one sees a line moving north or south (depending on the sign of $\eta_1$) at pace $|\eta_1|$,
and 
\begin{align*}
  f\left(m_2+\begin{pmatrix}0 \\ 0 \\ \delta_2 \end{pmatrix}\right)&= f(m_1)\delta_2\begin{pmatrix} \cdot \\ \eta_2 \\ \cdot \end{pmatrix}
\end{align*}
so that the trivalent point moves north or south (depending on the sign of $\eta_2$) at pace $|\eta_2|$ (the east/west component of the motion is not relevant). Altogether, depending on the sign of $\eta_2-\eta_1$, one gets a move:
\[
  \begin{tikzpicture}[anchorbase]
    % strand
    \draw [semithick,->] (-1,0) to [out=0,in=180] (0,-.5) to [out=0,in=180]  (1,0);
    % web
    \draw [line width=3, white] (0,-1) -- (0,0); 
    \draw [line width=3, white] (0,0) to [out=90,in=-90] (1,1);
    \draw [line width=3, white] (0,0) to [out=90,in=-90] (-1,1);
    \draw [semithick] (0,-1) -- (0,0);
    \draw [semithick,->] (0,0) to [out=90,in=-90] (1,1);
    \draw [semithick,->] (0,0) to [out=90,in=-90] (-1,1);
  \end{tikzpicture}
  \quad \leftrightarrow
  \quad
    \begin{tikzpicture}[anchorbase]
    % strand
    \draw [semithick,->] (-1,0) to [out=0,in=180] (0,.5) to [out=0,in=180]  (1,0);
    % web
    \draw [line width=3, white] (0,-1) -- (0,0); 
    \draw [line width=3, white] (0,0) to [out=90,in=-90] (1,1);
    \draw [line width=3, white] (0,0) to [out=90,in=-90] (-1,1);
    \draw [semithick] (0,-1) -- (0,0);
    \draw [semithick,->] (0,0) to [out=90,in=-90] (1,1);
    \draw [semithick,->] (0,0) to [out=90,in=-90] (-1,1);
  \end{tikzpicture}
\quad \text{or} \quad
  \begin{tikzpicture}[anchorbase]
    % web
    \draw [line width=3, white] (0,-1) -- (0,0); 
    \draw [line width=3, white] (0,0) to [out=90,in=-90] (1,1);
    \draw [line width=3, white] (0,0) to [out=90,in=-90] (-1,1);
    \draw [semithick] (0,-1) -- (0,0);
    \draw [semithick,->] (0,0) to [out=90,in=-90] (1,1);
    \draw [semithick,->] (0,0) to [out=90,in=-90] (-1,1);
    % strand
    \draw [line width=3, white] (-1,0) to [out=0,in=180] (0,-.5) to [out=0,in=180]  (1,0);
    \draw [semithick,->] (-1,0) to [out=0,in=180] (0,-.5) to [out=0,in=180]  (1,0);
  \end{tikzpicture}
  \quad \leftrightarrow \quad
  \begin{tikzpicture}[anchorbase]
    % web
    \draw [line width=3, white] (0,-1) -- (0,0); 
    \draw [line width=3, white] (0,0) to [out=90,in=-90] (1,1);
    \draw [line width=3, white] (0,0) to [out=90,in=-90] (-1,1);
    \draw [semithick] (0,-1) -- (0,0);
    \draw [semithick,->] (0,0) to [out=90,in=-90] (1,1);
    \draw [semithick,->] (0,0) to [out=90,in=-90] (-1,1);
    % strand
    \draw [line width=3, white] (-1,0) to [out=0,in=180] (0,.5) to [out=0,in=180]  (1,0);
    \draw [semithick,->] (-1,0) to [out=0,in=180] (0,.5) to [out=0,in=180]  (1,0);
  \end{tikzpicture}
\]

We emphasize that no framing changes occur during the move.

\subsubsection{Isolated points with vertical derivative}

Now, we consider the case of an isolated point where the tangent vector is vertical. In the non-framed setting, this yields a Reidemeister 1 move. Here we will have to be more careful about the framing. We consider $(u_0,v_0,t_0)\in W\times \{t_0\} \subset \tilde{W}\times \{t_0\}$ so that $(\frac{\partial f}{\partial u})(u_0,v_0,t_0)=\begin{pmatrix} 0 \\ 0 \\ \alpha \end{pmatrix}$ ($\alpha\neq 0$).

Again by a codimension argument, one can assume that $\pi\left(\frac{\partial^2f}{\partial u^2}\right)$ is non-zero at $(u_0,v_0,t_0)$, and up to rotation of the target space around a vertical axis we can assume that:
\[
  \frac{\partial^2f}{\partial u^2}=\begin{pmatrix}\eta \\ 0 \\ \cdot \end{pmatrix},\quad \eta>0.
\]

Then the $y$-coordinate in $\frac{\partial^3f}{\partial u^3}=\zeta$ can be assumed to be non-zero, as well as the $y$-coordinate $\mu$ in $\frac{\partial^2f }{\partial u \partial t}$.

This brings us to the following situation (up to a drift proportional to $\delta$ that won't change the shape of the picture):
\[
  f(u_0+\epsilon, v_0,t_0+\delta)\sim f(u_0,v_0,t_0)+\begin{pmatrix} \epsilon^2\eta \\ \epsilon \delta \mu+\epsilon^3\zeta  \\ \epsilon \alpha \end{pmatrix}
\]

Assume that $\zeta>0$ (the case of $\zeta<0$ can be treated similarly). Then at $\delta=0$ this draws (we take the projection, and the example drawn below is for $\eta=\zeta=1$):

\[
\begin{tikzpicture}
    \begin{axis}[
            xmin=-1,xmax=2,
            ymin=-4,ymax=4,
            grid=both,
            ]
            \addplot [domain=-3:3,samples=50]({x^2},{x^3}); 
    \end{axis}
\end{tikzpicture}
\]

Up to reversal of the time direction, we can assume that $\mu>0$. Then at positive $\delta$, we have an additional component $\epsilon \delta \mu$ in the $y$ direction, that pushes up the part of the curve with $\epsilon>0$ and down the part of the curve with $\epsilon <0$: this has the result of smoothing the curl, as illustrated below ($\mu=1$, $\delta=1$).
\[
\begin{tikzpicture}
    \begin{axis}[
            xmin=-1,xmax=2,
            ymin=-4,ymax=4,
            grid=both,
            ]
            \addplot [domain=-3:3,samples=50]({x^2},{x^3+x}); 
    \end{axis}
\end{tikzpicture}
\]

On the other hand for $\delta<0$, the upper part of the curve is pushed down and the bottom part is pushed up. For small $\epsilon$, then $\epsilon\delta >> \epsilon^3$ and the part in $\epsilon$ dominates in $y$ coordinate. At larger $\epsilon$ we go back to the first picture, as $\epsilon^3$ dominates $\epsilon\delta$. Below we illustrate the case $\mu=1$, $\delta=-1$.

\[
\begin{tikzpicture}
    \begin{axis}[
            xmin=-1,xmax=2,
            ymin=-4,ymax=4,
            grid=both,
            ]
            \addplot [domain=-3:3,samples=50]({x^2},{x^3-x}); 
    \end{axis}
\end{tikzpicture}
\]

Let us now focus on the framing. Recall that at $(u_0,v_0,t_0)$ we have a framing vector that is not vertical. Thus it projects onto a non-zero vector in the plane, and it determines the general shape of the framing around $(u_0,v_0,t_0)$. In other words, the framing around $(u_0,v_0,t_0)$ is always parallel to the one at $(u_0,v_0,t_0)$ up to small perturbation. Assume that the framing vector has non-zero $y$-coordinate. Requiring this assumption to fail is a codimension $1$ condition, and thus yields a non-generic situation. 
Then one easily sees that the condition~\eqref{eq:framing} will fail twice, one at $\delta<0$ and once at $\delta >0$, once passing over the derivative, once under the derivative (depending on the sign of $\alpha$).

We get a move:

\begin{align} \label{eq:frR1}
  \begin{tikzpicture}[anchorbase]
    \fill [fill=red,opacity=.5] (1,-1) to [out=180,in=-90] (0,0) to [out=90,in=180] (1,1) -- (1,1.1) to [out=180,in=90] (0,0) to [out=-90,in=180] (1,-.9);
    \draw[semithick] (1,-1) to [out=180,in=-90] (0,0) to [out=90,in=180] (1,1);
    \node at (0,0) {$\circ$};
    \node at (-.3,0) {\tiny $+$};
  \end{tikzpicture}
  \quad \leftrightarrow
  \quad
    \begin{tikzpicture}[anchorbase]
      \fill [fill=red,opacity=.5] (1,-1) to [out=180,in=-90] (.5,0) to [out=90,in=90] (0,0) to [out=90,in=90] (.6,0) to [out=-90,in=180] (1,-.9);
      \draw[semithick] (1,-1) to [out=180,in=-90] (.5,0) to [out=90,in=90] (0,0);
      \draw [line width=5, white] (0,0) to [out=-90,in=-90] (.5,0) to [out=90,in=180] (1,1);
      \fill [fill=red, opacity=.5] (0,0) to [out=-90,in=-90] (.5,0) to [out=90,in=180] (1,1) -- (1,1.1) to [out=180,in=90] (.4,0) to [out=-90,in=-90] (0,0);
      \draw [semithick] (0,0) to [out=-90,in=-90] (.5,0) to [out=90,in=180] (1,1);
      \node at (0,0) {$\bullet$};
      \node at (-.3,0) {\tiny $-$};
    \end{tikzpicture}
  \qquad,\qquad
  \begin{tikzpicture}[anchorbase]
    \fill [fill=red,opacity=.5] (1,-1) to [out=180,in=-90] (0,0) to [out=90,in=180] (1,1) -- (1,.9) to [out=180,in=90] (0,0) to [out=-90,in=180] (1,-1.1);
    \draw[semithick] (1,-1) to [out=180,in=-90] (0,0) to [out=90,in=180] (1,1);
    \node at (0,0) {$\bullet$};
    \node at (-.3,0) {\tiny $+$};
  \end{tikzpicture}
  \quad \leftrightarrow
  \quad
    \begin{tikzpicture}[anchorbase]
      \fill [fill=red,opacity=.5] (1,-1) to [out=180,in=-90] (.5,0) to [out=90,in=90] (0,0) to [out=90,in=90] (.4,0) to [out=-90,in=180] (1,-1.1);
      \draw[semithick] (1,-1) to [out=180,in=-90] (.5,0) to [out=90,in=90] (0,0);
      \draw [line width=5, white] (0,0) to [out=-90,in=-90] (.5,0) to [out=90,in=180] (1,1);
      \fill [fill=red, opacity=.5] (0,0) to [out=-90,in=-90] (.5,0) to [out=90,in=180] (1,1) -- (1,.9) to [out=180,in=90] (.6,0) to [out=-90,in=-90] (0,0);
      \draw [semithick] (0,0) to [out=-90,in=-90] (.5,0) to [out=90,in=180] (1,1);
      \node at (0,0) {$\circ$};
      \node at (-.3,0) {\tiny $-$};
  \end{tikzpicture} \\
  \begin{tikzpicture}[anchorbase]
    \fill [fill=red,opacity=.5] (1,-1) to [out=180,in=-90] (0,0) to [out=90,in=180] (1,1) -- (1,1.1) to [out=180,in=90] (0,0) to [out=-90,in=180] (1,-.9);
    \draw[semithick] (1,-1) to [out=180,in=-90] (0,0) to [out=90,in=180] (1,1);
    \node at (0,0) {$\bullet$};
    \node at (-.3,0) {\tiny $-$};
  \end{tikzpicture}
  \quad \leftrightarrow
  \quad
    \begin{tikzpicture}[anchorbase]
      \draw [line width=5, white] (0,0) to [out=-90,in=-90] (.5,0) to [out=90,in=180] (1,1);
      \fill [fill=red, opacity=.5] (0,0) to [out=-90,in=-90] (.5,0) to [out=90,in=180] (1,1) -- (1,1.1) to [out=180,in=90] (.4,0) to [out=-90,in=-90] (0,0);
      \draw [semithick] (0,0) to [out=-90,in=-90] (.5,0) to [out=90,in=180] (1,1);
      \draw[line width=5, white] (1,-1) to [out=180,in=-90] (.5,0) to [out=90,in=90] (0,0);
      \fill [fill=red,opacity=.5] (1,-1) to [out=180,in=-90] (.5,0) to [out=90,in=90] (0,0) to [out=90,in=90] (.6,0) to [out=-90,in=180] (1,-.9);
      \draw[semithick] (1,-1) to [out=180,in=-90] (.5,0) to [out=90,in=90] (0,0);
      \node at (0,0) {$\circ$};
      \node at (-.3,0) {\tiny $+$};
    \end{tikzpicture}
  \qquad,\qquad
  \begin{tikzpicture}[anchorbase]
    \fill [fill=red,opacity=.5] (1,-1) to [out=180,in=-90] (0,0) to [out=90,in=180] (1,1) -- (1,.9) to [out=180,in=90] (0,0) to [out=-90,in=180] (1,-1.1);
    \draw[semithick] (1,-1) to [out=180,in=-90] (0,0) to [out=90,in=180] (1,1);
    \node at (0,0) {$\circ$};
    \node at (-.3,0) {\tiny $-$};
  \end{tikzpicture}
  \quad \leftrightarrow
  \quad
    \begin{tikzpicture}[anchorbase]
      \draw [line width=5, white] (0,0) to [out=-90,in=-90] (.5,0) to [out=90,in=180] (1,1);
      \fill [fill=red, opacity=.5] (0,0) to [out=-90,in=-90] (.5,0) to [out=90,in=180] (1,1) -- (1,.9) to [out=180,in=90] (.6,0) to [out=-90,in=-90] (0,0);
      \draw [semithick] (0,0) to [out=-90,in=-90] (.5,0) to [out=90,in=180] (1,1);
      \draw[line width=5, white] (1,-1) to [out=180,in=-90] (.5,0) to [out=90,in=90] (0,0);
      \fill [fill=red,opacity=.5] (1,-1) to [out=180,in=-90] (.5,0) to [out=90,in=90] (0,0) to [out=90,in=90] (.4,0) to [out=-90,in=180] (1,-1.1);
      \draw[semithick] (1,-1) to [out=180,in=-90] (.5,0) to [out=90,in=90] (0,0);
      \node at (0,0) {$\bullet$};
      \node at (-.3,0) {\tiny $+$};
  \end{tikzpicture} 
  \nonumber
\end{align}

\subsubsection{Isolated framing changes with annihilation of second derivative}

Let us now consider an isolated point $m\times \{t_0\}\in W_1\times \{t_0\}$ of coordinates $(u_0,v_0,t_0)$ where:
\[(df)m=\begin{pmatrix}  a & \lambda a &  \cdot \\ b & \lambda b & \cdot \\ c & \lambda c +\mu & \cdot \end{pmatrix},\;\mu \neq 0.\]
As before, we reduce the situation to:
\[(df)m=\begin{pmatrix}  a &0 &  \cdot \\ 0 & 0 & \cdot \\ c & \mu & \cdot \end{pmatrix},\;\mu \neq 0.\]
We have furthermore assumed that $\frac{\partial^2 f}{\partial u\partial v}$ has zero $y$ coordinate. We can assume that $\beta'$, the $y$-coordinate of $\frac{\partial^3 f}{\partial u^2\partial v}$ is non-zero, as the codimension again increases when requiring failure of this property. Let us assume, up to symmetry, that $\beta'>0$. Then one can write:

\[
  \frac{\partial f_{t_0}}{\partial v}(u_0+\epsilon_1,v_0)=f(u_0,v_0,t_0)+ \begin{pmatrix} \alpha \epsilon_1 \\ \beta' \epsilon_1^2 \\ \mu+\gamma \epsilon_1 \end{pmatrix}+o(\epsilon_1)
\]
At $t=t_0$, this draws:
\begin{equation} \label{eq:FrChMorse}
  \begin{tikzpicture}[anchorbase] 
    \draw [semithick, ->] (0,0) -- (0,2);
    \fill [opacity=.5, fill=red] (-.1,0) -- (0,1) -- (-.1,2) -- (0,2) -- (0,0);
    \draw [blue,->] (.8,.8) -- (.8,1.2);
    \node at (.8,1.4) {\tiny $x$};
    \draw [blue,->] (.8,.8) -- (.4,.8);
    \node at (.4,.6) {\tiny $y$};
  \end{tikzpicture}
\end{equation}

Let us now look at the time direction. We claim that $\nu=\frac{\partial^2 f}{\partial v\partial t}$ can be assumed to be non-zero, again by codimension considerations. Thus one sees (up to a time drift in the $x$ and $z$ directions we haven't included):
\[
  \frac{\partial f}{\partial v}(u_0+\epsilon_1,v_0,t_0+\delta)\simeq f(u_0,v_0,t_0)+\begin{pmatrix} \alpha \epsilon_1  \\ \beta' \epsilon_1^2+\nu \delta \\ \mu+\gamma \epsilon_1 \end{pmatrix}
\]
Depending on the sign of $\delta$, one gets one or the other direction of the following moves (depending on the sign of $\mu$):
\begin{equation} \label{eq:FrChMorse2}
   \begin{tikzpicture}[anchorbase] 
    \draw [semithick, ->] (0,0) -- (0,2);
    \fill [opacity=.5, fill=red] (-.1,0) -- (.1,1) -- (-.1,2) -- (0,2) -- (0,0);
    \node at (0,1.5) {$\circ$};
    \node at (.3,1.5) {\tiny $1$};
    \node at (0,.5) {$\circ$};
    \node at (.3,.5) {\tiny $-1$};
  \end{tikzpicture}
  \quad \leftrightarrow \quad
   \begin{tikzpicture}[anchorbase] 
    \draw [semithick, ->] (0,0) -- (0,2);
    \fill [opacity=.5, fill=red] (-.1,0) -- (-.1,2) -- (0,2) -- (0,0);
  \end{tikzpicture}
  \qquad \text{or} \qquad
    \begin{tikzpicture}[anchorbase] 
    \draw [semithick, ->] (0,0) -- (0,2);
    \fill [opacity=.5, fill=red] (-.1,0) -- (.1,1) -- (-.1,2) -- (0,2) -- (0,0);
    \node at (0,1.5) {$\bullet$};
    \node at (.3,1.5) {\tiny $-1$};
    \node at (0,.5) {$\bullet$};
    \node at (.3,.5) {\tiny $1$};
  \end{tikzpicture}
  \quad \leftrightarrow \quad
   \begin{tikzpicture}[anchorbase] 
    \draw [semithick, ->] (0,0) -- (0,2);
    \fill [opacity=.5, fill=red] (-.1,0) -- (-.1,2) -- (0,2) -- (0,0);
  \end{tikzpicture}
\end{equation}

With $\beta'<0$ one gets:
\begin{equation} \label{eq:FrChMorse3}
   \begin{tikzpicture}[anchorbase] 
    \draw [semithick, ->] (0,0) -- (0,2);
    \fill [opacity=.5, fill=red] (.1,0) -- (-.1,1) -- (.1,2) -- (0,2) -- (0,0);
    \node at (0,1.5) {$\circ$};
    \node at (.3,1.5) {\tiny $-1$};
    \node at (0,.5) {$\circ$};
    \node at (.3,.5) {\tiny $+1$};
  \end{tikzpicture}
  \quad \leftrightarrow \quad
   \begin{tikzpicture}[anchorbase] 
    \draw [semithick, ->] (0,0) -- (0,2);
    \fill [opacity=.5, fill=red] (.1,0) -- (.1,2) -- (0,2) -- (0,0);
  \end{tikzpicture}
  \qquad \text{or} \qquad
   \begin{tikzpicture}[anchorbase] 
    \draw [semithick, ->] (0,0) -- (0,2);
    \fill [opacity=.5, fill=red] (.1,0) -- (-.1,1) -- (.1,2) -- (0,2) -- (0,0);
    \node at (0,1.5) {$\bullet$};
    \node at (.3,1.5) {\tiny $+1$};
    \node at (0,.5) {$\bullet$};
    \node at (.3,.5) {\tiny $-1$};
  \end{tikzpicture}
  \quad \leftrightarrow \quad
   \begin{tikzpicture}[anchorbase] 
    \draw [semithick, ->] (0,0) -- (0,2);
    \fill [opacity=.5, fill=red] (.1,0) -- (.1,2) -- (0,2) -- (0,0);
  \end{tikzpicture}
\end{equation}

\subsubsection{Isolated points on $W_0$ with framing breach}

Assume that $m=(u_0,v_0,t_0)\in W_0\times \{t_0\}$ with:
\[
  (df)_m=\begin{pmatrix}
    a & \lambda a & \cdot \\ b & \lambda b & \cdot \\ c & \lambda c +\mu &\cdot 
  \end{pmatrix}
  \quad \mu \neq 0.
\]
We again reduce to:
\[(df)m=\begin{pmatrix}  a &0 &  \cdot \\ 0 & 0 & \cdot \\ c & \mu & \cdot \end{pmatrix},\;\mu \neq 0,\; a>0.\]

We consider the case of split vertex (a merge vertex would be treated similarly), and for convenience we number the two outgoing strands $1$ and $2$ so that the $v$-coordinate on strand $1$ is positive, while the $v$-coordinate on strand $2$ is negative.

\[
  \begin{tikzpicture}[anchorbase,decoration={markings, mark=at position 0.5 with {\arrow{<}}; }]
    \draw [semithick, postaction={decorate}] (0,1) to [out=-90,in=90] (.5,.5);
    \draw [semithick, postaction={decorate}] (1,1) to [out=-90,in=90] (.5,.5);
    \draw [semithick, postaction={decorate}] (.5,.5) to [out=-90,in=90] (.5,0);
      \draw [blue,->] (1.3,.2) -- (1.6,.2);
    \node [blue] at (1.8,.2) {\tiny $v$};
    \draw [blue,->] (1.3,.2) -- (1.3,.5);
    \node [blue] at (1.3,.7) {\tiny $u$};
    \node at (1,1.2) {\tiny $1$};
    \node at (0,1.2) {\tiny $2$};
  \end{tikzpicture}
\]

Denote:
\[
  \frac{\partial^2 f}{\partial u \partial v}=\begin{pmatrix} d_1 \\ d_2 \\ d_3
  \end{pmatrix}\quad,\quad 
  \frac{\partial^2 f}{\partial t \partial v}=\begin{pmatrix} d_4 \\ d_5 \\ d_6
  \end{pmatrix}\quad,\quad
    \frac{\partial^2 f}{ \partial v^2}=\begin{pmatrix} d_7 \\ d_8 \\ d_9
  \end{pmatrix}
\]
One can assume that $d_2$ and $d_5$ are non-zero. Let us first look at the shape of the web through time. Since at first order $\frac{\partial f}{\partial t}$ only produces a drift of the web through time, we can ignore its contribution. Below it is treated as zero, as well as all purely-$t$ higher derivatives. Similarly the mixed derivative in $\partial u\partial t$ will not contribute to the change in shape of the web.
\[
  f(u_0+\epsilon_1,v_0+\epsilon_2,t_0+\delta)\sim f(u_0,v_0,t_0)+\begin{pmatrix} a \epsilon_1 +d_1\epsilon_1\epsilon_2+d_4\epsilon_2\delta+d_7\epsilon_2^2\\d_2\epsilon_1\epsilon_2 +d_5\epsilon_2\delta+d_8\epsilon_2^2\\ c \epsilon_1+\mu \epsilon_2 +d_3\epsilon_1\epsilon_2+d_6\epsilon_2\delta+d_0\epsilon_2^2 \end{pmatrix}
\]
Let's analyze the expression in the $y$ coordinate, and first consider $\delta=0$. $d_8\epsilon_2^2$ will not change the general shape of the image surface. $d_2\epsilon_1\epsilon_2$ determines whether the leg number $1$ is sent to positive or negative $y$ coordinates (resp., leg numbered $2$ being sent to negative or positive coordinates). Assume $d_2>0$. Thus at $\delta=0$ we simply observe:
\[
  \begin{tikzpicture}[anchorbase,decoration={markings, mark=at position 0.5 with {\arrow{>}}; }]
    \draw [semithick, postaction={decorate}] (0,.5) -- (.5,.5);
    \draw [semithick, postaction={decorate}] (.5,.5) to [out=0,in=180] (1,1);
    \node at (1.2,0) {\tiny $2$};
    \draw [semithick, postaction={decorate}] (.5,.5) to [out=0,in=180] (1,0);
    \node at (1.2,1) {\tiny $1$};
  \end{tikzpicture}
\]
In the time direction we add a contribution of $d_5\epsilon_2\delta$. Assume $d_5>0$ (up to time reversion one can reduce to this case). Then the strand $1$ is pushed higher in the $y$ direction and the strand $2$ is pushed lower in the $y$ direction if $\delta>0$, while for $\delta<0$ the strand $1$ is pushed lower in the $y$ direction and the strand numbered $2$ higher in the $y$ direction. This phenomenon is predominant while $\epsilon_1<<\delta$, but as $\epsilon_1$ grows, $d_2\epsilon_1\epsilon_2$ controls the shape of the web. One thus reads (the first picture corresponds to $\mu>0$, the second one to $\mu <0$):
\begin{equation} \label{eq:twistFork}
  % [inline block 0: 74 envs, 39252 chars -> data_tex | \begin{tikzpicture}[anchorbase,decoration={markings, mark=at position 0.7 with {\arrow{>}}; }]     \node at (0,1) {$\del...]

\]

\end{theorem}
\begin{proof}
Going from the multi-jet bundle stratification to the statement of the theorem is a standard argument in 
geometric topology, but we will review the high-level structure of that argument here.

Start with an arbitrary framed web-tangle.
After a small isotopy, we may assume that the multi-jet map associated to the web-tangle is transverse to the 
stratification (collection of submanifolds) described above (because the set of transverse maps is dense).
As explained earlier in this section, 
near each transverse intersection point the corresponding web-tangle projection looks like one of the generators
listed in the first part of the theorem (strand, half-twist, crossing, and trivalent vertex).
This proves the first part of the theorem.

Now consider an arbitrary isotopy of web-tangles, with the beginning and end of the isotopy already transverse (i.e.\ the multijet maps associated to the beginning and end of the isotopy are transverse to the jet-bundle stratification described above).
Because the set of transverse maps is dense, we can make a small perturbation (second-order isotopy) to the isotopy such that the new isotopy is transverse.
We now examine the possible transverse intersections.
Each such intersection yields one of the framed Reidemeister moves listed in the second part of the theorem.
\end{proof}

Recall that away from the points where the framing points downward, we have a preferred isotopy that makes the framing flat and rightward-pointing with respect to the orientation of the web. Notice that in the case of a trivalent, this changes the shape of the web:
\[
 \begin{tikzpicture}[anchorbase,rotate=90]
    \draw [semithick,->] (0,0) to [out=0,in=180] (1,.5);
    \draw [semithick] (-1,0) -- (0,0);
    \draw [semithick, ->] (0,0) to [out=0,in=180] (1,-.5);
    \fill [red,opacity=.5] (-1,0) -- (-1,.2) -- (0,.2) -- (0,0);
    \fill [red,opacity=.5] (0,0) to [out=0,in=180] (1,.5) -- (1,.7) to [out=180,in=0] (0,.2);
    \fill [red,opacity=.5] (0,0) to [out=0,in=180] (1,-.5) -- (1,-.3) to [out=180,in=0] (0,.2);
  \end{tikzpicture}
  \quad \leftrightarrow \quad
\begin{tikzpicture}[anchorbase,rotate=90]
    \draw [semithick, ->] (0,0) to [out=0,in=180] (.3,.3) to [out=0,in=180] (1,-.5);
    \draw [white, line width=3] (0,0) to [out=0,in=180] (.3,-.3) to [out=0,in=180] (1,.5);
    \draw [semithick] (-1,0) -- (0,0);
    \draw [semithick, ->] (0,0) to [out=0,in=180] (.3,-.3) to [out=0,in=180] (1,.5);
    \fill [red,opacity=.5] (-1,0) -- (0,0) -- (0,-.2) -- (-1,.2);
    \fill [red,opacity=.5] (0,0) to [out=0,in=180] (.3,.3) to [out=0,in=180] (1,-.5) -- (1,-.3) to [out=180,in=-60] (.6,.2) to [out=120,in=0] (.3,.2) to [out=180,in=0] (0,-.2);
    \fill [red, opacity=.5] (0,0) to [out=0,in=180] (.3,-.3) to [out=0,in=180] (1,.5) -- (1,.7) to [out=180,in=60] (.6,.2) to [out=-120,in=0] (.3,-.4) to [out=180,in=0] (0,-.2);
    \node at (-.5,0) {$\bullet$};
    \node at (-.5,-.3) {\tiny $+1$};
   \node at (.5,-.23) {$\bullet$};
    \node at (.5,-.5) {\tiny $-1$};
    \node at (.5,.23) {$\bullet$};
    \node at (.5,.45) {\tiny $-1$};
      \end{tikzpicture}
  \]
One can thus translate the previous theorem into the following one (where we do not show the framing anymore, as it is on the right hand side of the strand at all times, except for the twists). For full twists, we use the symbol $\LEFTcircle$.

\begin{theorem} \label{th:ReidFT}
Oriented, framed web-tangles admit preferred diagrams locally generated  by the following pieces: 
  \begin{gather*}
      \begin{tikzpicture}[anchorbase] 
    \draw [semithick,->] (0,0) -- (0,1);
  \end{tikzpicture}
  \quad
  \begin{tikzpicture}[anchorbase] 
    \draw [semithick,->] (0,0) -- (0,1);
    \node at (0,.5) {\tiny $\LEFTcircle$};
    \node at (.3,.5) {\tiny $-1$};
  \end{tikzpicture}
  \quad
  \begin{tikzpicture}[anchorbase] 
    \draw [semithick,->] (0,0) -- (0,1);
    \node at (0,.5) {\tiny $\LEFTcircle$};
    \node at (.3,.5) {\tiny $+1$};
  \end{tikzpicture}
\quad
  \begin{tikzpicture}[anchorbase]
    \draw [semithick,->] (0,0) -- (1,1);
    \draw [white, line width=3] (1,0) -- (0,1);
    \draw [semithick,->] (1,0) -- (0,1);
  \end{tikzpicture}
\quad
  \begin{tikzpicture}[anchorbase,rotate=90]
    \draw [semithick,->] (0,0) -- (1,1);
    \draw [white, line width=3] (1,0) -- (0,1);
    \draw [semithick,<-] (1,0) -- (0,1);
  \end{tikzpicture}
\quad
    \begin{tikzpicture}[anchorbase,rotate=90,scale=.75]
    \draw [semithick,->] (0,0) to [out=0,in=180] (1,.5);
    \draw [semithick] (-1,0) -- (0,0);
    \draw [semithick,->] (0,0) to [out=0,in=180] (1,-.5);
  \end{tikzpicture}
  \quad
    \begin{tikzpicture}[anchorbase,rotate=-90,scale=.75]
    \draw [semithick] (0,0) to [out=0,in=180] (1,.5);
    \draw [semithick,<-] (-1,0) -- (0,0);
    \draw [semithick] (0,0) to [out=0,in=180] (1,-.5);
   \end{tikzpicture}
\end{gather*}

Isotopies of oriented, framed web-tangles induce planar isotopies together a finite number of moves from the following list (in pictures not containing framing information, this can be freely chosen without framing changes):
\begin{equation*}
  \begin{tikzpicture}[anchorbase,scale=.75]
    \draw [semithick] (1,0) to [out=90,in=-90] (0,1) to [out=90,in=-90] (1,2);
    \draw [white, line width=3] (0,0) to [out=90,in=-90] (1,1) to [out=90,in=-90] (0,2);
     \draw [semithick] (0,0) to [out=90,in=-90] (1,1) to [out=90,in=-90] (0,2);
   \end{tikzpicture}
   \quad \leftrightarrow \quad
   \begin{tikzpicture}[anchorbase,scale=.75]
    \draw [semithick] (0,0) -- (0,2);
    \draw [semithick] (1,0) -- (1,2);
  \end{tikzpicture}
  \quad \leftrightarrow \quad
  \begin{tikzpicture}[anchorbase,scale=.75]
    \draw [semithick] (0,0) to [out=90,in=-90] (1,1) to [out=90,in=-90] (0,2);
    \draw [line width=3, white] (1,0) to [out=90,in=-90] (0,1) to [out=90,in=-90] (1,2);
    \draw [semithick] (1,0) to [out=90,in=-90] (0,1) to [out=90,in=-90] (1,2);
  \end{tikzpicture}
   \qquad,\qquad
   \begin{tikzpicture}[anchorbase,scale=.5]
     \draw [semithick] (2,0) -- (2,1) to [out=90,in=-90] (1,2) to [out=90,in=-90] (0,3);
     \draw [white, line width=3] (1,0) to [out=90,in=-90] (0,1) -- (0,2) to [out=90,in=-90] (1,3);
     \draw [semithick] (1,0) to [out=90,in=-90] (0,1) -- (0,2) to [out=90,in=-90] (1,3);
     \draw [white, line width=3] (0,0) to [out=90,in=-90] (1,1) to [out=90,in=-90] (2,2) -- (2,3);
     \draw [semithick] (0,0) to [out=90,in=-90] (1,1) to [out=90,in=-90] (2,2) -- (2,3);
   \end{tikzpicture}   
   \quad \leftrightarrow \quad
   \begin{tikzpicture}[anchorbase,scale=.5]
     \draw [semithick] (2,0) to [out=90,in=-90] (1,1) to [out=90,in=-90] (0,2) -- (0,3);
     \draw [white, line width=3] (1,0) to [out=90,in=-90] (2,1) -- (2,2) to [out=90,in=-90] (1,3);
     \draw [semithick] (1,0) to [out=90,in=-90] (2,1) -- (2,2) to [out=90,in=-90] (1,3);
     \draw [white, line width=3] (0,0) -- (0,1)  to [out=90,in=-90] (1,2) to [out=90,in=-90] (2,3);
     \draw [semithick] (0,0) -- (0,1) to [out=90,in=-90] (1,2) to [out=90,in=-90] (2,3);
   \end{tikzpicture}
   \quad +\text{\small \; other versions of the $R_{3}$ move; }
\end{equation*}

\begin{equation*}
    \begin{tikzpicture}[anchorbase,scale=.75]
    % strand
    \draw [semithick] (-1,0) to [out=0,in=180] (0,-.5) to [out=0,in=180]  (1,0);
    % web
    \draw [line width=3, white] (0,-1) -- (0,0); 
    \draw [line width=3, white] (0,0) to [out=90,in=-90] (1,1);
    \draw [line width=3, white] (0,0) to [out=90,in=-90] (-1,1);
    \draw [semithick] (0,-1) -- (0,0);
    \draw [semithick] (0,0) to [out=90,in=-90] (1,1);
    \draw [semithick] (0,0) to [out=90,in=-90] (-1,1);
  \end{tikzpicture}
  \quad \leftrightarrow
  \quad
    \begin{tikzpicture}[anchorbase, scale=.75]
    % strand
    \draw [semithick] (-1,0) to [out=0,in=180] (0,.5) to [out=0,in=180]  (1,0);
    % web
    \draw [line width=3, white] (0,-1) -- (0,0); 
    \draw [line width=3, white] (0,0) to [out=90,in=-90] (1,1);
    \draw [line width=3, white] (0,0) to [out=90,in=-90] (-1,1);
    \draw [semithick] (0,-1) -- (0,0);
    \draw [semithick] (0,0) to [out=90,in=-90] (1,1);
    \draw [semithick] (0,0) to [out=90,in=-90] (-1,1);
  \end{tikzpicture}
\quad , \quad
  \begin{tikzpicture}[anchorbase, scale=.75]
    % web
    \draw [line width=3, white] (0,-1) -- (0,0); 
    \draw [line width=3, white] (0,0) to [out=90,in=-90] (1,1);
    \draw [line width=3, white] (0,0) to [out=90,in=-90] (-1,1);
    \draw [semithick] (0,-1) -- (0,0);
    \draw [semithick] (0,0) to [out=90,in=-90] (1,1);
    \draw [semithick] (0,0) to [out=90,in=-90] (-1,1);
    % strand
    \draw [line width=3, white] (-1,0) to [out=0,in=180] (0,-.5) to [out=0,in=180]  (1,0);
    \draw [semithick] (-1,0) to [out=0,in=180] (0,-.5) to [out=0,in=180]  (1,0);
  \end{tikzpicture}
  \quad \leftrightarrow \quad
  \begin{tikzpicture}[anchorbase, scale=.75]
    % web
    \draw [line width=3, white] (0,-1) -- (0,0); 
    \draw [line width=3, white] (0,0) to [out=90,in=-90] (1,1);
    \draw [line width=3, white] (0,0) to [out=90,in=-90] (-1,1);
    \draw [semithick] (0,-1) -- (0,0);
    \draw [semithick] (0,0) to [out=90,in=-90] (1,1);
    \draw [semithick] (0,0) to [out=90,in=-90] (-1,1);
    % strand
    \draw [line width=3, white] (-1,0) to [out=0,in=180] (0,.5) to [out=0,in=180]  (1,0);
    \draw [semithick] (-1,0) to [out=0,in=180] (0,.5) to [out=0,in=180]  (1,0);
  \end{tikzpicture}
\end{equation*}

\begin{gather*}
  %% 1
  \begin{tikzpicture}[anchorbase,scale=.75]
    \draw[semithick,->] (0,-1) -- (0,1);
  \end{tikzpicture}
  \quad \leftrightarrow
  \quad
    \begin{tikzpicture}[anchorbase,scale=.75]
      \draw[semithick] (.5,-1) to [out=90,in=-90] (.5,0) to [out=90,in=90] (0,0);
      \draw [line width=3, white] (0,0) to [out=-90,in=-90] (.5,0) to [out=90,in=-90] (.5,1);
      \draw [semithick,->] (0,0) to [out=-90,in=-90] (.5,0) to [out=90,in=-90] (.5,1);
      \node at (.25,-.18) {\tiny $\LEFTcircle$};
      \node at (.25,-.4) {\tiny $-1$};
  \end{tikzpicture}
  \qquad,\qquad
  %% 2
  \begin{tikzpicture}[anchorbase,scale=.75,yscale=-1]
    \draw[semithick,<-] (0,-1) -- (0,1);
  \end{tikzpicture}
  \quad \leftrightarrow
  \quad
    \begin{tikzpicture}[anchorbase,scale=.75,yscale=-1]
      \draw[semithick,<-] (.5,-1) to [out=90,in=-90] (.5,0) to [out=90,in=90] (0,0);
      \draw [line width=3, white] (0,0) to [out=-90,in=-90] (.5,0) to [out=90,in=-90] (.5,1);
      \draw [semithick,] (0,0) to [out=-90,in=-90] (.5,0) to [out=90,in=-90] (.5,1);
      \node at (.25,-.18) {\tiny $\LEFTcircle$};
      \node at (.25,-.4) {\tiny $+1$};
  \end{tikzpicture}
  \quad,\quad
  %% 3
  \begin{tikzpicture}[anchorbase,scale=.75,xscale=-1]
    \draw[semithick,->] (0,-1) -- (0,1);
     \end{tikzpicture}
  \quad \leftrightarrow
  \quad
    \begin{tikzpicture}[anchorbase,scale=.75,xscale=-1]
      \draw[semithick] (.5,-1) to [out=90,in=-90] (.5,0) to [out=90,in=90] (0,0);
      \draw [line width=3, white] (0,0) to [out=-90,in=-90] (.5,0) to [out=90,in=-90] (.5,1);
      \draw [semithick,->] (0,0) to [out=-90,in=-90] (.5,0) to [out=90,in=-90] (.5,1);
         \node at (.25,-.18) {\tiny $\LEFTcircle$};
      \node at (.25,-.4) {\tiny $+1$};
  \end{tikzpicture}
  \qquad,\qquad
  %%4
  \begin{tikzpicture}[anchorbase,scale=-.75]
    \draw[semithick,<-] (0,-1) -- (0,1);
  \end{tikzpicture}
  \quad \leftrightarrow
  \quad
    \begin{tikzpicture}[anchorbase,scale=-.75]
      \draw[semithick,<-] (.5,-1) to [out=90,in=-90] (.5,0) to [out=90,in=90] (0,0);
      \draw [line width=3, white] (0,0) to [out=-90,in=-90] (.5,0) to [out=90,in=-90] (.5,1);
      \draw [semithick] (0,0) to [out=-90,in=-90] (.5,0) to [out=90,in=-90] (.5,1);
      \node at (.25,-.18) {\tiny $\LEFTcircle$};
      \node at (.25,-.4) {\tiny $-1$};
  \end{tikzpicture}
  \\
  %% 1
  \begin{tikzpicture}[anchorbase,scale=.75]
    \draw[semithick,->] (0,-1) -- (0,1);
    \node at (0,0) {\tiny $\LEFTcircle$};
    \node at (-.3,0) {\tiny $+$};
  \end{tikzpicture}
  \quad \leftrightarrow
  \quad
    \begin{tikzpicture}[anchorbase,scale=.75]
      \draw[semithick] (.5,-1) to [out=90,in=-90] (.5,0) to [out=90,in=90] (0,0);
      \draw [line width=3, white] (0,0) to [out=-90,in=-90] (.5,0) to [out=90,in=-90] (.5,1);
      \draw [semithick,->] (0,0) to [out=-90,in=-90] (.5,0) to [out=90,in=-90] (.5,1);
  \end{tikzpicture}
  \qquad,\qquad
  %% 2
  \begin{tikzpicture}[anchorbase,scale=.75,yscale=-1]
    \draw[semithick,<-] (0,-1) -- (0,1);
    \node at (0,0) {\tiny $\LEFTcircle$};
    \node at (-.3,0) {\tiny $-$};
  \end{tikzpicture}
  \quad \leftrightarrow
  \quad
    \begin{tikzpicture}[anchorbase,scale=.75,yscale=-1]
      \draw[semithick,<-] (.5,-1) to [out=90,in=-90] (.5,0) to [out=90,in=90] (0,0);
      \draw [line width=3, white] (0,0) to [out=-90,in=-90] (.5,0) to [out=90,in=-90] (.5,1);
      \draw [semithick,] (0,0) to [out=-90,in=-90] (.5,0) to [out=90,in=-90] (.5,1);
  \end{tikzpicture}
  \quad,\quad
  %% 3
  \begin{tikzpicture}[anchorbase,scale=.75,xscale=-1]
    \draw[semithick,->] (0,-1) -- (0,1);
    \node at (0,0) {\tiny $\LEFTcircle$};
    \node at (-.3,0) {\tiny $-$};
     \end{tikzpicture}
  \quad \leftrightarrow
  \quad
    \begin{tikzpicture}[anchorbase,scale=.75,xscale=-1]
      \draw[semithick] (.5,-1) to [out=90,in=-90] (.5,0) to [out=90,in=90] (0,0);
      \draw [line width=3, white] (0,0) to [out=-90,in=-90] (.5,0) to [out=90,in=-90] (.5,1);
      \draw [semithick,->] (0,0) to [out=-90,in=-90] (.5,0) to [out=90,in=-90] (.5,1);
  \end{tikzpicture}
  \qquad,\qquad
  %%4
  \begin{tikzpicture}[anchorbase,scale=-.75]
    \draw[semithick,<-] (0,-1) -- (0,1);
    \node at (0,0) {\tiny $\LEFTcircle$};
    \node at (-.3,0) {\tiny $+$};
  \end{tikzpicture}
  \quad \leftrightarrow
  \quad
    \begin{tikzpicture}[anchorbase,scale=-.75]
      \draw[semithick,<-] (.5,-1) to [out=90,in=-90] (.5,0) to [out=90,in=90] (0,0);
      \draw [line width=3, white] (0,0) to [out=-90,in=-90] (.5,0) to [out=90,in=-90] (.5,1);
      \draw [semithick] (0,0) to [out=-90,in=-90] (.5,0) to [out=90,in=-90] (.5,1);
  \end{tikzpicture}
\end{gather*}

\[
   \begin{tikzpicture}[anchorbase] 
    \draw [semithick, ->] (0,0) -- (0,2);
    \node at (0,1.5) {\tiny $\LEFTcircle$};
    \node at (.3,1.5) {\tiny $-1$};
    \node at (0,.5) {\tiny $\LEFTcircle$};
    \node at (.3,.5) {\tiny $+1$};
  \end{tikzpicture}
  \quad \leftrightarrow \quad
   \begin{tikzpicture}[anchorbase] 
    \draw [semithick, ->] (0,0) -- (0,2);
  \end{tikzpicture}
\quad,\quad
   \begin{tikzpicture}[anchorbase] 
    \draw [semithick, ->] (0,0) -- (0,2);
    \node at (0,1.5) {\tiny $\LEFTcircle$};
    \node at (.3,1.5) {\tiny $1$};
    \node at (0,.5) {\tiny $\LEFTcircle$};
    \node at (.3,.5) {\tiny $-1$};
  \end{tikzpicture}
  \quad \leftrightarrow \quad
   \begin{tikzpicture}[anchorbase] 
    \draw [semithick, ->] (0,0) -- (0,2);
  \end{tikzpicture}
\]

\[
  \begin{tikzpicture}[anchorbase,rotate=90]
    \draw [semithick, ->] (0,0) to [out=0,in=180] (1,.5);
    \draw [semithick] (-1,0) -- (0,0);
    \draw [semithick, ->] (0,0) to [out=0,in=180] (1,-.5);
    \node at (-.3,0) {\tiny $\LEFTcircle$};
    \node at (-.3,-.3) {\tiny $+1$};
  \end{tikzpicture}
  \quad \leftrightarrow \quad
  \begin{tikzpicture}[anchorbase,rotate=90]
      \draw [semithick] (-.5,0) to [out=0,in=180] (-.25,-.5) to [out=0,in=180] (.25,.5);
      \draw [white, line width=3] (-.5,0) to [out=0,in=180] (-.25,.5) to [out=0,in=180] (.25,-.5);
      \draw [semithick] (-.5,0) to [out=0,in=180] (-.25,.5) to [out=0,in=180] (.25,-.5);
      \draw [semithick,->] (.25,-.5) to [out=0,in=180] (.75,.5) -- (1,.5);
      \draw [white, line width=3] (.25,.5) to [out=0,in=180] (.75,-.5) -- (1,-.5);
      \draw [semithick,->] (.25,.5) to [out=0,in=180] (.75,-.5) -- (1,-.5);
      \draw [semithick] (-1,0) -- (-.5,0);
  \node at (.75,-.5) {\tiny $\LEFTcircle$};
    \node at (.75,-.7) {\tiny $+1$};
    \node at (.75,.5) {\tiny $\LEFTcircle$};
    \node at (.75,.7) {\tiny $+1$};
      \end{tikzpicture}
  \quad,\quad
  \begin{tikzpicture}[anchorbase,rotate=-90, xscale=-1]
    \draw [semithick, ->] (0,0) to [out=0,in=180] (1,.5);
    \draw [semithick] (-1,0) -- (0,0);
    \draw [semithick, ->] (0,0) to [out=0,in=180] (1,-.5);
    \node at (-.3,0) {\tiny $\LEFTcircle$};
    \node at (-.3,-.3) {\tiny $-1$};
  \end{tikzpicture}
  \quad \leftrightarrow \quad
  \begin{tikzpicture}[anchorbase,rotate=-90,xscale=-1]
      \draw [semithick] (-.5,0) to [out=0,in=180] (-.25,-.5) to [out=0,in=180] (.25,.5);
      \draw [white, line width=3] (-.5,0) to [out=0,in=180] (-.25,.5) to [out=0,in=180] (.25,-.5);
      \draw [semithick] (-.5,0) to [out=0,in=180] (-.25,.5) to [out=0,in=180] (.25,-.5);
      \draw [semithick,->] (.25,-.5) to [out=0,in=180] (.75,.5) -- (1,.5);
      \draw [white, line width=3] (.25,.5) to [out=0,in=180] (.75,-.5) -- (1,-.5);
      \draw [semithick,->] (.25,.5) to [out=0,in=180] (.75,-.5) -- (1,-.5);
      \draw [semithick] (-1,0) -- (-.5,0);
  \node at (.75,-.5) {\tiny $\LEFTcircle$};
    \node at (.75,-.7) {\tiny $-1$};
    \node at (.75,.5) {\tiny $\LEFTcircle$};
    \node at (.75,.7) {\tiny $-1$};
      \end{tikzpicture}
    \]

    \[
  \begin{tikzpicture}[anchorbase,rotate=-90]
    \draw [semithick] (0,0) to [out=0,in=180] (1,.5);
    \draw [semithick,<-] (-1,0) -- (0,0);
    \draw [semithick] (0,0) to [out=0,in=180] (1,-.5);
    \node at (-.3,0) {\tiny $\LEFTcircle$};
    \node at (-.3,-.3) {\tiny $+1$};
  \end{tikzpicture}
  \quad \leftrightarrow \quad
  \begin{tikzpicture}[anchorbase,rotate=-90]
      \draw [semithick] (-.5,0) to [out=0,in=180] (-.25,-.5) to [out=0,in=180] (.25,.5);
      \draw [white, line width=3] (-.5,0) to [out=0,in=180] (-.25,.5) to [out=0,in=180] (.25,-.5);
      \draw [semithick] (-.5,0) to [out=0,in=180] (-.25,.5) to [out=0,in=180] (.25,-.5);
      \draw [semithick] (.25,-.5) to [out=0,in=180] (.75,.5) -- (1,.5);
      \draw [white, line width=3] (.25,.5) to [out=0,in=180] (.75,-.5) -- (1,-.5);
      \draw [semithick] (.25,.5) to [out=0,in=180] (.75,-.5) -- (1,-.5);
      \draw [semithick,<-] (-1,0) -- (-.5,0);
  \node at (.75,-.5) {\tiny $\LEFTcircle$};
    \node at (.75,-.7) {\tiny $+1$};
    \node at (.75,.5) {\tiny $\LEFTcircle$};
    \node at (.75,.7) {\tiny $+1$};
      \end{tikzpicture}
  \quad,\quad
  \begin{tikzpicture}[anchorbase,rotate=-90]
    \draw [semithick] (0,0) to [out=0,in=180] (1,.5);
    \draw [semithick,<-] (-1,0) -- (0,0);
    \draw [semithick] (0,0) to [out=0,in=180] (1,-.5);
    \node at (-.3,0) {\tiny $\LEFTcircle$};
    \node at (-.3,-.3) {\tiny $+1$};
  \end{tikzpicture}
  \quad \leftrightarrow \quad
  \begin{tikzpicture}[anchorbase,rotate=-90]
      \draw [semithick] (-.5,0) to [out=0,in=180] (-.25,-.5) to [out=0,in=180] (.25,.5);
      \draw [white, line width=3] (-.5,0) to [out=0,in=180] (-.25,.5) to [out=0,in=180] (.25,-.5);
      \draw [semithick] (-.5,0) to [out=0,in=180] (-.25,.5) to [out=0,in=180] (.25,-.5);
      \draw [semithick] (.25,-.5) to [out=0,in=180] (.75,.5) -- (1,.5);
      \draw [white, line width=3] (.25,.5) to [out=0,in=180] (.75,-.5) -- (1,-.5);
      \draw [semithick] (.25,.5) to [out=0,in=180] (.75,-.5) -- (1,-.5);
      \draw [semithick,<-] (-1,0) -- (-.5,0);
  \node at (.75,-.5) {\tiny $\LEFTcircle$};
    \node at (.75,-.7) {\tiny $+1$};
    \node at (.75,.5) {\tiny $\LEFTcircle$};
    \node at (.75,.7) {\tiny $+1$};
      \end{tikzpicture}
\]
\end{theorem}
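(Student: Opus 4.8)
The plan is to deduce this statement directly from Theorem~\ref{th:ReidHTwist} by applying the preferred rotation isotopy described in the framing convention remark, rather than re-running the multijet analysis from scratch. Theorem~\ref{th:ReidHTwist} already furnishes a complete list of generators and moves in the presentation where the framing is tracked explicitly (as a red ribbon) and its passages over/under a strand are recorded as half-twists. To reach the present statement I would apply, continuously and strand-by-strand, the rotation of $\partial f/\partial v$ about the tangent $\partial f/\partial u$ that brings the framing into the rightward (flat) position while never sweeping through the downward half-plane $\R\,\partial f/\partial u + \R^{-}(0,0,1)$. Since this rotation is canonical (the non-downward-crossing choice is unique away from the twist points), it defines an isotopy of framed web-tangles, and therefore transports the classification of Theorem~\ref{th:ReidHTwist} to a classification of right-framed diagrams. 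The content of the theorem is then simply to read off the images of the generators and moves.

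First I would translate the generators. The right-framed strand becomes the plain strand, the framing being suppressed; each crossing and each trivalent vertex of Theorem~\ref{th:ReidHTwist}, whose framing is already rightward, becomes the corresponding plain generator. The framing-on-the-left segments are rotated back to the right; as such a segment is bounded by half-twist points and the framing has a well-defined residual winding relative to the blackboard framing, the leftover rotation is recorded at isolated points as the full-twist generator $\LEFTcircle$, the $\circ/\bullet$ and $\pm$ data of the original half-twists fixing its sign. This exhausts the generator list: plain strand, the two full twists $\LEFTcircle(\pm1)$, the two crossings, and the split and merge vertices.

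Next I would transport the moves along the same rotation. The $R_2$ and $R_3$ moves and the trivalent fork-slide involve no framing change in their strand geometry, so they become the stated $R_2$, $R_3$ and vertex-slide moves verbatim. The framed Reidemeister-1 moves~\eqref{eq:frR1} become the curl/full-twist moves (an $R_1$ kink now carrying a full twist rather than a single half-twist). The framing-change cancellations~\eqref{eq:FrChMorse2} and~\eqref{eq:FrChMorse3} become the full-twist cancellation $\LEFTcircle(+1)\,\LEFTcircle(-1)\leftrightarrow\mathrm{id}$, and the $W_0$ framing-breach moves~\eqref{eq:twistFork} become the moves relating a twist on the incoming leg of a trivalent vertex to twists on its two outgoing legs, exactly as in the shape-change computation displayed just before the statement.

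The main obstacle I expect is the bookkeeping at the trivalent vertex. Because the vertex is asymmetric, the rightward-framing condition cannot hold on all three legs simultaneously, so pushing the framing to the right forces the two crossings shown in the transition picture; one must verify that the preferred rotation produces precisely that crossed configuration and that the induced full twists on the outgoing legs carry the correct signs. The secondary difficulty is confirming that the conversion of half-twist data into full-twist data is consistent across all sign and $\circ/\bullet$ cases, i.e.\ that the non-downward-crossing convention assigns a well-defined full-twist sign in each instance. Once these local verifications are in place, the global statement follows from Theorem~\ref{th:ReidHTwist}, since the preferred rotation is applied pointwise and does not interact with the stratification analysis.
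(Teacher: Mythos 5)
Your proposal is correct and follows exactly the route the paper takes: the paper derives Theorem~\ref{th:ReidFT} from Theorem~\ref{th:ReidHTwist} by applying the preferred non-downward-crossing rotation from the framing-convention remark, recording the residual winding as full twists $\LEFTcircle$ and noting (via the displayed picture before the statement) that the asymmetric trivalent vertex forces the two crossings and paired twists on its legs. The verifications you flag as obstacles are precisely the content the paper treats as the translation step, so nothing further is missing.
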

%%%%%%%%%%%%%%%%%%%%%%%%%%

%%%%%%%%%%%%%%%%%%%%%%%%%%%%%%%%%%%%%%%%%%%%%%%%%%%%%%%%%%%%%%%%%%%%%%%%%%%%%%%%%%%%%%%%%%
\section{Movie moves for framed foams} \label{sec:foams}
%%%%%%%%%%%%%%%%%%%%%%%%%%%%%%%%%%%%%%%%%%%%%%%%%%%%%%%%%%%%%%%%%%%%%%%%%%%%%%%%%%%%%%%%%%

In this section, we will work on upgrading the Reidemeister theorem for framed, tangled webs one dimension higher. Natural cobordisms between webs are foams, and we will be looking for a presentation of the category of framed foams between tangled webs.

Classically, a foam $F$ is a special kind of 2-dimensional singular surface. Just as in the previous section, we will encode the framing by considering a thickened version $N(F)$ containing a preferred copy $F\subset N(F)$. There should be an atlas making it locally smoothly diffeomorphic to one of the following elementary pieces:
\begin{itemize}
\item for $x\in F^{2}$, a local model is given by $\D^2\times [0,1]$, with preferred copy $\D^2\times \{\frac{1}{2}\}$;
\item for $x\in F^{1}$, a local model for $F$ is given by $Y\times [0,1]$, with $Y$ a trivalent web: \[
Y=\quad    \begin{tikzpicture}[anchorbase,decoration={markings, mark=at position 0.5 with {\arrow{>}}; }]
    \draw [semithick, postaction={decorate}] (0,0) to [out=90,in=-90] (.5,.5);
    \draw [semithick, postaction={decorate}] (1,0) to [out=90,in=-90] (.5,.5);
    \draw [semithick, postaction={decorate}] (.5,.5) to [out=90,in=-90] (.5,1);
  \end{tikzpicture}
  \quad \text{or}\quad
  Y=\quad 
    \begin{tikzpicture}[anchorbase,decoration={markings, mark=at position 0.5 with {\arrow{<}}; }]
    \draw [semithick, postaction={decorate}] (0,1) to [out=-90,in=90] (.5,.5);
    \draw [semithick, postaction={decorate}] (1,1) to [out=-90,in=90] (.5,.5);
    \draw [semithick, postaction={decorate}] (.5,.5) to [out=-90,in=90] (.5,0);
  \end{tikzpicture}
\]
Recalling from the previous section that $Y$ can be turned into a $2$-manifold with corners $\tilde{Y}$, we define $N(F)=\tilde{Y}\times [0,1]$ with preferred copy $Y\times [0,1]$;
\item for $x\in F^0$, the local model is illustrated below:
  \begin{equation} \label{eq:infl6val}
    \begin{tikzpicture}[anchorbase]
      %% Bottom web
      \draw [semithick] (-1,0) -- (0,0);
      \draw [semithick] (0,0) to [out=0,in=160] (1,-.5);
      \draw [semithick] (1,-.5) to [out=-20,in=180] (1.8,-1);
      \draw [semithick] (1,-.5) to [out=-20,in=180] (2,0);
      \draw [semithick] (0,0) to [out=0,in=180] (2.2,1);
      %% Foam
      % LHS sheet
      \fill [fill=yellow, fill opacity=.2] (-1,0) -- (0,0) -- (0,3) -- (-1,3) -- (-1,0);
      \draw (-1,0) -- (-1,3);
      % Back sheet
      \fill [semithick, fill=red, fill opacity=.2] (0,0) to [out=0,in=180] (2.2,1) -- (2.2,4) to [out=180,in=20] (1,3.5) to [out=-90,in=90] (0,1.5) -- (0,0);
      \draw (2.2,1) -- (2.2,4);
      % Middle sheets
      \fill [fill=green, fill opacity=.2] (0,3) to [out=0,in=-160] (1,3.5) to [out=-90,in=90] (0,1.5) -- (0,3);
      \fill [fill=blue, fill opacity=.2] (0,0) to [out=0,in=160] (1,-.5) to [out=90,in=-90] (0,1.5) -- (0,0);
      \fill [fill=purple, fill opacity=.2] (1,-.5) to [out=-20,in=180] (2,0) -- (2,3) to [out=180,in=20] (1,3.5) to [out=-90,in=90] (0,1.5) to [out=-90,in=90] (1,-.5);
      \draw (2,0) -- (2,3);
      % Back seam
      \draw [red, thick] (0,1.5) to [out=90,in=-90] (1,3.5);
      % Front sheet
      \fill [fill=orange, fill opacity=.2] (1.8,-1) to [out=180,in=-20] (1, -.5) to [out=90,in=-90] (0,1.5) -- (0,3) to [out=0,in=180] (1.8,2) -- (1.8,-1);
      \draw (1.8,-1) -- (1.8,2);
      % Other seams
      \draw [red, thick] (0,0) -- (0,3);
      \draw [red, thick] (1,-.5) to [out=90,in=-90] (0,1.5);
      %% Top web
      \draw [semithick] (-1,3) -- (0,3);
      \draw [semithick] (0,3) to [out=0,in=-160] (1,3.5);
      \draw [semithick] (1,3.5) to [out=20,in=180] (2.2,4);
      \draw [semithick] (1,3.5) to [out=20,in=180] (2,3);
      \draw [semithick] (0,3) to [out=0,in=180] (1.8,2);
      % Numbering the facets
      \node [circle] at (-.5,1.5) {1};
      \node [circle] at (.6,-.05) {2};
      \node [circle] at (.6,3) {3};
      \node [circle] at (1.5,-.5) {4};
      \node [circle] at (1.6,2.5) {5};
      \node [circle] at (1.8,3.5) {6};
    \end{tikzpicture}
    \quad \rightarrow \quad
    \begin{tikzpicture}[anchorbase]
      %% Bottom web
      \draw [semithick] (-1,0) -- (0,0);
      \draw [semithick] (0,0) to [out=0,in=160] (1,-.5);
      \draw [semithick] (1,-.5) to [out=-20,in=180] (1.8,-1);
      \draw [semithick] (1,-.5) to [out=-20,in=180] (2,0);
      \draw [semithick] (0,0) to [out=0,in=180] (2.2,1);
      %% Inflated bottom web
      \draw (-.9,.1) -- (-1.1,-.1) -- (-.1,-.1) to [out=0,in=160] (.9,-.6) to [out=-20,in=180] (1.7,-1.1) -- (1.9,-.9) to [out=180,in=-100] (1.5,-.5) to [out=80,in=180] (1.9,-.1) -- (2.1,.1) to [out=180,in=-20] (1,-.4) to [out=160, in=-20] (.5,-.075) to [out=160,in=-160] (.5,.075) to [out=20,in=180] (2.1,.9) -- (2.3,1.1) to [out=180,in=0] (0,.1) --  (-.9,.1);
      %% Foam
      % LHS sheet
      \fill [fill=yellow, fill opacity=.2] (-1,0) -- (0,0) -- (0,3) -- (-1,3) -- (-1,0);
      \draw (-1,0) -- (-1,3);
      % Back inflated vertical lines
      \draw (-.9,.1) -- (-.9,3.1);
      \draw (2.3,1.1) -- (2.3,4.1);
      % Back sheet
      \fill [semithick, fill=red, fill opacity=.2] (0,0) to [out=0,in=180] (2.2,1) -- (2.2,4) to [out=180,in=20] (1,3.5) to [out=-90,in=90] (0,1.5) -- (0,0);
      \draw (2.2,1) -- (2.2,4);
      % Inflated vertical lines
      \draw (2.1,.9) -- (2.1,3.9);
      \draw (2.1,.1) -- (2.1,3.1);
      % Middle sheets
      \fill [fill=green, fill opacity=.2] (0,3) to [out=0,in=-160] (1,3.5) to [out=-90,in=90] (0,1.5) -- (0,3);
      \fill [fill=blue, fill opacity=.2] (0,0) to [out=0,in=160] (1,-.5) to [out=90,in=-90] (0,1.5) -- (0,0);
      \fill [fill=purple, fill opacity=.2] (1,-.5) to [out=-20,in=180] (2,0) -- (2,3) to [out=180,in=20] (1,3.5) to [out=-90,in=90] (0,1.5) to [out=-90,in=90] (1,-.5);
      \draw (2,0) -- (2,3);
      % Back seam
      \draw [red, thick] (0,1.5) to [out=90,in=-90] (1,3.5);
      % Inflated vertical lines
      \draw (1.9,-.1) -- (1.9,2.9);
      \draw (1.9,-.9) -- (1.9,1.9);
      % Front sheet
      \fill [fill=orange, fill opacity=.2] (1.8,-1) to [out=180,in=-20] (1, -.5) to [out=90,in=-90] (0,1.5) -- (0,3) to [out=0,in=180] (1.8,2) -- (1.8,-1);
      \draw (1.8,-1) -- (1.8,2);
      % Other seams
      \draw [red, thick] (0,0) -- (0,3);
      \draw [red, thick] (1,-.5) to [out=90,in=-90] (0,1.5);
      % Front inflated vertical lines
      \draw (-1.1,-.1) -- (-1.1,2.9);
      \draw (1.7,-1.1) -- (1.7,1.9);
      %% Top web
      \draw [semithick] (-1,3) -- (0,3);
      \draw [semithick] (0,3) to [out=0,in=-160] (1,3.5);
      \draw [semithick] (1,3.5) to [out=20,in=180] (2.2,4);
      \draw [semithick] (1,3.5) to [out=20,in=180] (2,3);
      \draw [semithick] (0,3) to [out=0,in=180] (1.8,2);
      % Inflated top web
      \draw (-.9,3.1) -- (-1.1,2.9) -- (-.1,2.9) to [out=0,in=180] (1.7,1.9) -- (1.9,2.1) to [out=180,in=-20] (.5,2.925) to [out=160,in=-160] (.5,3.075) to [out=20,in=-160] (1,3.4) to [out=20,in=180] (1.9,2.9) -- (2.1,3.1) to [out=160,in=-100] (1.5,3.55) to [out=80,in=-160] (2.1,3.9) -- (2.3,4.1) to [out=180,in=20] (1,3.6) to [out=-160,in=0] (0,3.1) to [out=180,in=0] (-.9,3.1);
      % Local coordinates
      \draw [blue,->] (3.5,1) -- (4,1);
      \node [blue] at (4.2,1) {\tiny u};
      \draw [blue,->] (3.5,1) -- (3.5,1.5);
      \node [blue] at (3.5,1.7) {\tiny v};
      \draw [blue,->] (3.5,1) -- (3.8,1.3);
      \node [blue] at (4,1.5) {\tiny w};
    \end{tikzpicture}
  \end{equation}
  Above we have indicated our preferred system of local coordinates. More precisely, we want to have a singular surface that is combinatorially as shown above, and so that the following pieces assemble into a smooth rectangle:
  \begin{itemize}
  \item facets $1$, $2$ and $4$;
  \item facets $1$, $2$, $3$ and $5$;
  \item facets $1$, $3$ and $6$.
  \end{itemize}
\end{itemize}

An explicit smooth realization of such a local model for $F$ at a 6-valent point can be built as follows (but other, non-diffeomorphic ones can exist):
  \begin{itemize}
  \item $\{x\leq 0,y=0,z\}$;
  \item $\{x>0,y=e^{-\frac{1}{x}},z\}$;
  \item $\{x>0,y=-e^{-\frac{1}{x}},z\}$;
  \item $\left\{
        x>e^{-\frac{1}{|z|}},y=\begin{cases}
      e^{-\frac{1}{x}}-e^{-\frac{1}{x-e^{-\frac{1}{z}}}}\; \text{if}\; z>0
      \\
      y=0 \; \text{if}\; z=0
      \\
      -e^{-\frac{1}{x}}+e^{-\frac{1}{x-e^{\frac{1}{z}}}}\; \text{if}\; z<0
    \end{cases}
    \right\}$
  \end{itemize}
Below we provide pictures of the slices at $z=1$, $z=0$ and $z=-1$.

\[
\begin{tikzpicture}[scale=.7]
    \begin{axis}[
            xmin=-.3,xmax=1,
            ymin=-.5,ymax=.5,
            grid=both,
            ]
            \addplot [domain=-.3:0,samples=50,line width=1.3, color=red]({x},{0});
            \addplot [domain=.01:1,samples=50, line width=1.3, color=green]({x},{exp(-1/x)});
            \addplot [domain=exp(-1):1,samples=50,line width=1.3, color=purple]({x},{exp(-1/x)-exp(-1/(x-exp(-1)))});
            \addplot [domain=.01:1,samples=50, line width=1.3, color=orange]({x},{-exp(-1/x)});
    \end{axis}
\end{tikzpicture}
\quad
\begin{tikzpicture}[scale=.7]
    \begin{axis}[
            xmin=-.3,xmax=1,
            ymin=-.5,ymax=.5,
            grid=both,
            ]
            \addplot [domain=-.3:0,samples=50,line width=1.3, color=red]({x},{0});
            \addplot [domain=.01:1,samples=50, line width=1.3, color=green]({x},{exp(-1/x)});
            \addplot [domain=0:1,samples=50,line width=1.3, color=purple]({x},{0});
            \addplot [domain=.01:1,samples=50, line width=1.3, color=orange]({x},{-exp(-1/x)});
    \end{axis}
\end{tikzpicture}
\quad
\begin{tikzpicture}[scale=.7]
    \begin{axis}[
            xmin=-.3,xmax=1,
            ymin=-.5,ymax=.5,
            grid=both,
            ]
            \addplot [domain=-.3:0,samples=50,line width=1.3, color=red]({x},{0});
            \addplot [domain=.01:1,samples=50, line width=1.3, color=green]({x},{exp(-1/x)});
            \addplot [domain=exp(-1):1,samples=50, line width=1.3, color=purple]({x},{-exp(-1/x)+exp(-1/(x-exp(-1)))});
            \addplot [domain=.01:1,samples=50, line width=1.3, color=orange]({x},{-exp(-1/x)});
    \end{axis}
\end{tikzpicture}
\]
Then $N(F)$ is obtained by taking a neighborhood in the 3-ball.

We then consider smooth maps $f:N(F) \rightarrow \R^4$, and denoting $x,y,z,s$ the coordinates in $\R^4$, we consider the projection maps $\pi:\R^4\rightarrow \R^3$ that forgets the coordinate $z$. A framed foam in $\R^4$ is the image of $F$ under such a map $f$ that is injective on $F$, together with a non-tangent non-vanishing vector field, obtained by looking at the first derivative in the $w$ direction.

We will also look at isotopies of foams, described by families $f_t$ of such maps, $t\in [0,1]$.

We require the map $f$ (and all maps $f_t$) to respect the following conditions:
\begin{enumerate} \label{enum:condFoam}
\item \label{enum:condFoam1} $f$ is injective when restricted to $F$;
\item \label{enum:condFoam2} $\left(\frac{\partial f}{\partial u},\frac{\partial f}{\partial v},\frac{\partial f}{\partial w}\right)$ is full-rank on $F$.
\end{enumerate}

Consider the first condition. The union of submanifolds where this is not met is:
\[
  \{(M_1,N_1,M_2,N_1)\;|\;M_1,M_2\in F,\; N_1=N_2\in \R^4\}
\]
This is a union of codimension 6 submanifolds in the 2-fold jet bundle, while the graph of $f$ is 6-dimensional. One thus expects generically isolated intersections. We restrict our attention to maps that have no such intersections.

For the second condition, we have a set:
\[
  \{(M,N,D)\;|\;M\in F,\; N\in \R^4, D\;\text{not full rank}\}
\]
The codimension is: $1$ for the restriction to $F$, and since $D$ is a 4 by 3 matrix, requiring it not to be full rank amounts to freely choosing two columns, and asking the last one to be a linear combination of the first two ones. Since there are two parameters to choose for 4 entries, this is a codimension 2 condition. This all adds up to codimension $3$, while the graph is of dimension $3$ ($4$ if one adds the time parameter). As before, one expects generically isolated points where the condition is not fulfilled. We restrict out attention to maps that do not have such points.

%For the third condition, we read a codimension $4$ condition ($3$ for the . This is thus generic for foams, but could happen at isolated points through time. We thus restrict to maps $f_t$ that have $0$ such points, and again this set is stable under small perturbations. The reason to include this condition is as follows. Given $M\in F$, let $V(M)$ be  a neighborhood of $M$ in $F$. From the discussions to follow, we will see that generically, $V(M)\cap \R^4_{x,y,z,s=s(f(M))}\simeq \R^3$ is a web in $\R^3$. To obtain a framed web, we will want to 

%Let us say a bit more about this condition, which is related to the framing. Given $M\in F$, let $V(M)$ be  a neighborhood of $M$ in $F$. From the discussions to follow, we will see that generically, $V(M)\cap \R^4_{x,y,z,s=s(f(M))}\simeq \R^3$ is a web in $\R^3$. To obtain the framing, one can proceed as follows: 

Now, we will be looking for local models for generators of foams and foam isotopies, and relations between them.

As we go into this analysis, we will be using the following notations for derivatives: 
\[
\frac{\partial f}{\partial u}=\begin{pmatrix} a_1 \\ a_2 \\ a_3 \\ a_4 \end{pmatrix},\quad
\frac{\partial f}{\partial v}=\begin{pmatrix} b_1 \\ b_2 \\ b_3 \\ b_4 \end{pmatrix},\quad
\frac{\partial f}{\partial w}=\begin{pmatrix} c_1 \\ c_2 \\ c_3 \\ c_4 \end{pmatrix}.
\]

Similarly for second derivatives, we will use the following notations:
\[
\frac{\partial^2 f}{\partial u^2}=\begin{pmatrix} d_1 \\ d_2 \\ d_3 \\ d_4 \end{pmatrix},\quad
\frac{\partial^2 f}{\partial u\partial v}=\begin{pmatrix} e_1 \\ e_2 \\ e_3 \\ e_4 \end{pmatrix},\quad
\frac{\partial^2 f}{\partial u\partial w}=\begin{pmatrix} f_1 \\ f_2 \\ f_3 \\ f_4 \end{pmatrix},\quad
\frac{\partial^2 f}{\partial v^2}=\begin{pmatrix} g_1 \\ g_2 \\ g_3 \\ g_4 \end{pmatrix},\quad
\frac{\partial^2 f}{\partial v\partial w}=\begin{pmatrix} h_1 \\ h_2 \\ h_3 \\ h_4 \end{pmatrix},\quad
\frac{\partial^2 f}{\partial w^2}=\begin{pmatrix} i_1 \\ i_2 \\ i_3 \\ i_4 \end{pmatrix}.
\]
Higher derivatives will occasionally appear. We do not define global notations for them, and will choose local ones when required.

We will first list conditions that make the neighborhood of a given point trivial (meaning that it corresponds to an identity movie). Then we will investigate the failure of each of these conditions. We consider successively points on $F^2$, $F^1$ or $F^0$.

%%%%%%%%%%%%%%%%%%%%%%%%%%%%%%%%%%%%%%%%%%%%%%%%
\subsection{Conditions on $F^2$}
%%%%%%%%%%%%%%%%%%%%%%%%%%%%%%%%%%%%%%%%%%%%%%%

Given a point $M$ on $F^2$, we can find a trivial neighborhood for the image $f(M)$ if:
\begin{itemize}
\item $f(M)$ is not a multiple point under $\pi$;
\item $(\frac{\partial f}{\partial u},\frac{\partial f}{\partial v})$ is not contained in the $s=0$ sub-space of $\R^4$;
\item $(\frac{\partial f}{\partial u},\frac{\partial f}{\partial v})$ does not contain the direction $\vec{z}$;
\item $\frac{\partial f}{\partial w}$ is not contained in the space spanned by $\vec{z},\frac{\partial f}{\partial u},\frac{\partial f}{\partial v}$.
\end{itemize}

Indeed, consider $M\in F^2$ that enjoys the previous properties. Then:
\[
  (df)_m=\begin{pmatrix}  \cdot & \cdot & \cdot \\ \cdot & \cdot & \cdot \\\cdot & \cdot & \cdot \\\star & \star & \cdot \end{pmatrix}
\]
and we can assume that at least one of the entries marked with a star is non-zero (since the first two vectors are not contained in the $s=0$ subspace). Up to reparametrization in $u,v$, we can reduce to:
\[
  (df)_m=\begin{pmatrix}  \cdot & \cdot & \cdot \\ \cdot & \cdot & \cdot \\\cdot & \cdot & \cdot \\0 & \neq 0 & \cdot \end{pmatrix}
\]
Now, since  $(\frac{\partial f}{\partial u},\frac{\partial f}{\partial v})$ does not contain the direction $\vec{z}$, we can assume that:
\[
  (df)_m\neq\begin{pmatrix}  \textcolor{red}{0} & \cdot & \cdot \\ \textcolor{red}{0} & \cdot & \cdot \\\cdot & \cdot & \cdot \\0 & \neq 0 & \cdot \\\end{pmatrix}
\]
Up to rotation in the $x,y$ plane in the target space, we can assume that the differential is:
\[
  (df)_m=\begin{pmatrix}  >0 & \cdot & \cdot \\ 0 & \cdot & \cdot \\\cdot & \cdot & \cdot \\0 & \neq 0 & \cdot \end{pmatrix}
\]
and up to adding to the $v$ coordinate a multiple of $u$, and to the $w$ coordinates multiples of $u$ and $v$, we reduce to:
\[
  (df)_m=\begin{pmatrix}  >0 & 0 & 0 \\ 0 & \cdot & \cdot \\\cdot & \cdot & \cdot \\0 & \neq 0 & \cdot \end{pmatrix}=
  \begin{pmatrix}  a_1 & 0 & 0 \\ 0 & b_2 & c_2 \\ a_3 & b_3 & c_3 \\ 0 & b_4 & 0 \end{pmatrix}\quad a_1>0,\; b_4\neq 0
\]
The change of coordinate in $w$ modifies the framing by a multiple of a vector that belongs to the tangent plane to $F$. This is of no effect from our perspective. 

Furthermore, $c_2$ can be assumed non-zero thanks to the condition on $\frac{\partial f}{\partial w}$.

Then one can run a Taylor expansion:
\begin{align*}
  f(m_1+\epsilon_1,m_2+\epsilon_2,m3)&=\begin{pmatrix}  \epsilon_1 a_1 \\ \epsilon_2 b_2 \\ \epsilon_1 a_3 + \epsilon_2 b_3 \\ b_4 \epsilon_2 \end{pmatrix} \\
  \left(\frac{\partial f}{\partial w}\right)_{m_1+\epsilon_1,m_2+\epsilon_2,m_3}&=\left(\frac{\partial f}{\partial w}\right)_{m_1,m_2,m_3}+o(\epsilon_1,\epsilon_2)
\end{align*}
In terms of movies, one reads the following trivial movie:
\[
  \begin{tikzpicture}[anchorbase,scale=.75]
    % Frame
    \draw (.1,.1) rectangle (3.9,3.9);
    \draw[semithick] (.1,1) -- (3.9,1);
    % Frame
    \draw (4.1,.1) rectangle (7.9,3.9);
    \draw[semithick] (4.1,2) -- (7.9,2);
    % Frame
    \draw (8.1,.1) rectangle (11.9,3.9);
    \draw[semithick] (8.1,3) -- (11.9,3);
  \end{tikzpicture}
\]
We haven't indicated the framing, which stays on the same side at all time and never vanishes.

%%%%%%%%%%%%%%%%%%%%%%%%%%%%%%%%%%%%%%%%%%%
\subsection{Conditions on $F^1$}
%%%%%%%%%%%%%%%%%%%%%%%%%%%%%%%%%%%%%%%%%%%%

Let us now look for a local model on $F^1$ in the easiest case. We will use the following conditions, for $M\in F^{1}$, with the local parametrization in the source space so that the direction $v$ agrees with the direction of the seam:
\begin{itemize}
\item $f(M)$ is not a multiple point under $\pi$;
\item $\frac{\partial f}{\partial v}\neq \begin{pmatrix} \cdot \\ \cdot \\ \cdot \\ 0\end{pmatrix}$;
\item $(\frac{\partial f}{\partial u},\frac{\partial f}{\partial v})$ does not contain the direction $\vec{z}$;
\item $\frac{\partial f}{\partial v}$ is not contained in the space spanned by $\vec{z},\frac{\partial f}{\partial u},\frac{\partial f}{\partial v}$.
\end{itemize}

We use the condition on $\frac{\partial f}{\partial v}$ to add a multiple of $v$ to $u$ so that:
\[
  (df)_m=\begin{pmatrix}  \cdot & \cdot & \cdot \\ \cdot & \cdot & \cdot \\\cdot & \cdot & \cdot \\ 0 & \neq 0 & \cdot \end{pmatrix}
\]
This has the effect to not leave the seam strictly vertical, but makes it drift in the $u$ direction.

Then the condition on the $\vec{z}$ direction implies that up to rotation in the $x,y$ plane, we can restrict to:
\[
  (df)_m=\begin{pmatrix}  >0 & \cdot & \cdot \\ 0 & \cdot & \cdot \\\cdot & \cdot & \cdot \\ 0 & \neq 0 & \cdot \end{pmatrix}
\]

Then we can use the condition on the framing and add to $w$ multiples of $u$ and $v$ so that:
\[
    (df)_m=\begin{pmatrix}  >0 & \cdot & 0 \\ 0 & \cdot & \neq 0 \\\cdot & \cdot & \cdot \\ 0 & \neq 0 & 0 \end{pmatrix}
  \]

  As before, we read from a Taylor expansion a trivial movie with non-vanishing framing in the projection:
\[
  \begin{tikzpicture}[anchorbase,scale=.75]
    % Frame
    \draw (.1,.1) rectangle (3.9,3.9);
    \draw[semithick] (.1,1) -- (1,1);
    \draw[semithick] (1,1) to [out=0,in=180] (3.9,1.5);
    \draw[semithick] (1,1) to [out=0,in=180] (3.9,.5);
    % Frame
    \draw (4.1,.1) rectangle (7.9,3.9);
    \draw[semithick] (4.1,2) -- (6,2);
    \draw[semithick] (6,2) to [out=0,in=180] (7.9,2.5);
    \draw[semithick] (6,2) to [out=0,in=180] (7.9,1.5);
    % Frame
    \draw (8.1,.1) rectangle (11.9,3.9);
    \draw[semithick] (8.1,3) -- (11,3);
    \draw[semithick] (11,3) to [out=0,in=180] (11.9,3.5);
    \draw[semithick] (11,3) to [out=0,in=180] (11.9,2.5);
  \end{tikzpicture}
\]

%%%%%%%%%%%%%%%%%%%%%%%%%%%%%%%%%%%%%%%%%%%%%%%
\subsection{Conditions on $F^0$}
%%%%%%%%%%%%%%%%%%%%%%%%%%%%%%%%%%%%%%%%%%%%%%%

Finally, we focus on $M\in F^0$, with choice of coordinates as in Equation~\eqref{eq:infl6val}.
We assume the following:
\begin{itemize}
\item $f(M)$ is not a multiple point under $\pi$;
\item $\frac{\partial f}{\partial v}\neq \begin{pmatrix} \cdot \\ \cdot \\ \cdot \\ 0\end{pmatrix}$;
\item $(\frac{\partial f}{\partial u},\frac{\partial f}{\partial v})$ does not contain the direction $\vec{z}$;
\item $\frac{\partial f}{\partial w}$ is not contained in the space spanned by $\vec{z},\frac{\partial f}{\partial u},\frac{\partial f}{\partial v}$.
\end{itemize}

We run the very same analysis as in the $F^1$ case, and obtain the following generator for a foam:
\[
  \begin{tikzpicture}[anchorbase,scale=.75]
    % Frame
    \draw (.1,.1) rectangle (3.9,3.9);
    \draw[semithick] (.1,1) -- (1.5,1);
    \draw[semithick] (1.5,1) to [out=0,in=180] (2.25,1.5);
    \draw [semithick] (2.25,1.5) to [out=0,in=180] (3.9,1.75);
    \draw [semithick] (2.25,1.5) to [out=0,in=180] (3.9,1.25);
    \draw[semithick] (1.5,1) to [out=0,in=180] (3.9,.5);
    % Frame
    \draw (4.1,.1) rectangle (7.9,3.9);
    \draw[semithick] (4.1,2) -- (6,2);
    \draw[semithick] (6,2) to [out=0,in=180] (7.9,2.5);
    \draw [semithick] (6,2) -- (7.9,2);
    \draw[semithick] (6,2) to [out=0,in=180] (7.9,1.5);
    % Frame
    \draw (8.1,.1) rectangle (11.9,3.9);
    \draw[semithick] (8.1,3) -- (10.5,3);
    \draw[semithick] (10.5,3) to [out=0,in=180] (11.9,3.5);
    \draw[semithick] (10.5,3) to [out=0,in=180] (11.25,2.5);
    \draw [semithick] (11.25,2.5) to [out=0,in=180] (11.9,2.75);
    \draw [semithick] (11.25,2.5) to [out=0,in=180] (11.9,2.25);    
  \end{tikzpicture}
\]

In general the middle picture is omitted and we represent this generator as:
\[
  \begin{tikzpicture}[anchorbase,scale=.75]
    % Frame
    \draw (.1,.1) rectangle (3.9,3.9);
    \draw[semithick] (.1,1) -- (1.5,1);
    \draw[semithick] (1.5,1) to [out=0,in=180] (2.25,1.5);
    \draw [semithick] (2.25,1.5) to [out=0,in=180] (3.9,1.75);
    \draw [semithick] (2.25,1.5) to [out=0,in=180] (3.9,1.25);
    \draw[semithick] (1.5,1) to [out=0,in=180] (3.9,.5);
    % Frame
    \draw (4.1,.1) rectangle (7.9,3.9);
    \draw[semithick] (4.1,3) -- (6.5,3);
    \draw[semithick] (6.5,3) to [out=0,in=180] (7.9,3.5);
    \draw[semithick] (6.5,3) to [out=0,in=180] (7.25,2.5);
    \draw [semithick] (7.25,2.5) to [out=0,in=180] (7.9,2.75);
    \draw [semithick] (7.25,2.5) to [out=0,in=180] (7.9,2.25);    
  \end{tikzpicture}
\]

Again, nothing interesting happens to the framing.

\vspace{1cm}

We will now analyze all situations we have excluded in the above discussion, and look for other foam generators (movies) as well as generators for isotopies (movie moves).

%%%%%%%%%%%%%%%%%%%%%%%%%%%%%%%%%%%%%%%%%%%%
\subsection{Multiple points in projection}
%%%%%%%%%%%%%%%%%%%%%%%%%%%%%%%%%%%%%%%%%%%

In $J_0^{k}$, we consider:
\[
  \{(M_1,N_1,\dots,M_k,N_k)\;|\; M_1\in F,\dots,M_k\in F,\; \pi(N_1)=\pi(N_2)=\dots=\pi(N_k)\}
\]
The codimension is: $k$ times $1$ for the restriction to $F$, and $(k-1)$ times $3$ for the identification of three coordinates in $N_i$, $i>1$, with those of $N_1$. In total we read $4k-3$. The dimension of the graph on the other hand is $3k$, or $3k+1$ if one considers a 1-parameter family of functions.

If $k>3$ then $3k<4k-3$, and if $k>4$, $3k+1<4k-3$, so one only has to consider double or triple points for foams, and quadruple points for isotopies.

{\bf Double points in projection}

We have a codimension $5$ condition for a $6$-dimensional graph, so we expect $1$-dimensional generic intersections (2-dimensional through time). If instead of looking at multiple points from $F\times F$ one restricts to $F^1\times F$ or $F\times F^1$, then one has isolated points ($1$-dimensional sets through time). For $F^1\times F^1$, $F^2\times F^0$ or $F^0\times F^2$, the intersection is generically empty for a foam, and isolated for a time family. Finally, there are no intersections of the kind $F^0\times F^0$, even through time.

Let us start with double points $\pi(f(M_1))=\pi(f(M_2))$ with $M_1$ and $M_2\in F^2$. Asking the intersection to be transverse is equivalent to requiring that the following vectors generate a 3-dimensional space under $\pi$:
\[
\left(\frac{\partial f}{\partial u}\right)_{M_1},\;\left(\frac{\partial f}{\partial v}\right)_{M_1},\;\left(\frac{\partial f}{\partial u}\right)_{M_2},\;\left(\frac{\partial f}{\partial u}\right)_{M_2}
\]
Failure of this requirement is a codimension 2 condition (up to permutation, one fixes the first two vectors, then the third and fourth ones are linear combinations of the first two ones: this takes two parameters for each of them, out of three coordinates). Since we already have restricted our attention to a $1$-dimensional situation, this can be assumed for free for a foam. For a $1$-parameter family there will be isolated points where this condition is not met (see page~\pageref{page:nontransverse2pt} for details).

Asking both $\left(\frac{\partial f}{\partial u}\right)_{M_1}$ and $\left(\frac{\partial f}{\partial v}\right)_{M_1}$ to have zero $s$-coordinate is a codimension 2 condition, so for a foam this can be assumed not to happen at a double point (there will be isolated points through time, yielding $\rm{MM}_{14}$ and $\rm{MM}_{15}$ as we shall see later). So we can reparametrize at $M_1$ and $M_2$ so that $\frac{\partial f}{\partial u}$ has zero $s$-coordinate. It might happen (this is a codimension $1$ condition) that $\pi(\left(\frac{\partial f}{\partial u}\right)_{M_1})$ and $\pi(\left(\frac{\partial f}{\partial u}\right)_{M_2})$ are colinear. We will analyze this situation later (this will give $\rm{MM}_3$, $\rm{MM}_4$ and $\rm{MM}_9$) and assume for now that this is not the case. We will also assume that both $u$ derivatives are not purely vertical. As this is a codimension $2$ condition, this could happen for only one of the two vectors at isolated points through time. This will yield a version of the classical movie move $\rm{MM}_8$, as we will see later. Up to reparametrization in the $x,y$ variables, we can assume that $\pi(\left(\frac{\partial f}{\partial u}\right)_{M_1})$ is parallel to $\vec{x}$ and $\pi(\left(\frac{\partial f}{\partial u}\right)_{M_2})$ is parallel to $\vec{y}$.

Then one reads an identity movie over a crossing:
\[
  \begin{tikzpicture}[anchorbase,scale=.5]
    % Frame
    \draw (.1,.1) rectangle (3.9,3.9);
    \draw[semithick] (2,.1) -- (2,3.9);
    \draw [white, line width=3] (.1,2) -- (3.9,2);
    \draw [semithick] (.1,2) -- (3.9,2);
  \end{tikzpicture}
\]

Let us now look at the framing. If $\pi(\left(\frac{\partial f}{\partial w})_{M_1}\right)$ and $\pi(\left(\frac{\partial f}{\partial w})_{M_2}\right)$ lie outside of the space spanned by the other two tangent vectors and the vertical direction, then nothing happens. Requiring that the framing is vertical is a codimension $1$ condition, so there are isolated points where this happens on one of the two strands. Through time, there also are isolated points where this happens on both strands at the same time.

    Consider first the foam case, and assume that the framing vector projects onto the tangent vector to the strand parallel to the $x$ direction (corresponding to $M_1$). Then the shape of the framing on this strand will be determined by $\left(\frac{\partial^2 f}{\partial u\partial w}\right)$ that can be assumed to have a non-zero $y$ component (through time, this could be false). In the $s$ direction, the framing is simply transported parallel if $\left(\frac{\partial^2 f}{\partial u\partial w}\right)_y\neq 0$, while in the other case the control is ensured by $\left(\frac{\partial^2 f}{\partial v\partial w}\right)$, assumed to have a non-zero $y$ entry for codimension reason (see page~\pageref{page:framingcross} for the moves). Depending on signs, we are in one of the following situations (each generator can be read from left to right or right to left):

    \begin{gather*}
 % [inline block 1: 107 envs, 50183 chars -> data_tex | \begin{tikzpicture}[anchorbase,scale=.5]    % Big frame...]

        };
        \node at (0,-3) {$\text{MM}_{15}$};
        \draw [<->] (A) -- (B1);
        \draw [<->] (A) -- (B2);
        \draw [<->] (B1) -- (C);
        \draw [<->] (B2) -- (C);
      \end{tikzpicture}
\]

Let us now argue about the case where the intersection is not transverse. Since $\left(\frac{\partial f}{\partial u}\right)_{M_1}$ and $\left(\frac{\partial f}{\partial v}\right)_{M_1}$ always form a 2-dimensional vector space (and the same holds for $M_2$), two things can happen for the intersection not to be transverse:
\begin{itemize} \label{page:nontransverse2pt}
\item both projections remain of dimension $2$, and become equal. This is a codimension 2 condition, and it will recover the movie moves $\rm{MM}_3$ and $\rm{MM}_4$ already considered;
\item one of the two projections is only of dimension $1$, and included in the projection of the other tangent space. But this is a codimension $3$ condition and thus cannot happen generically.
\end{itemize}

Finally, we consider the case where one of the $u$ derivatives is purely vertical, say at $M_1$. One sees a generator from~\eqref{eq:frR1} happening on one of the two strands, superposed with an extra strand that intersects transversely (for codimension reasons). One then obtains the following movie move (and its analogs with other choices for the crossings and twist sign):

\[
  \begin{tikzpicture}[anchorbase]
    \node (A) at (0,0) {
\begin{tikzpicture}[anchorbase, scale=.5]
  % R1
  \draw [line width=3, white] (0,2) -- (4,2);
  \draw [semithick] (0,2) -- (4,2);
  % Strand
  \draw [line width=3,white] (2,0) -- (2,4);
  \draw [semithick] (2,0) -- (2,4);
  % twists
  \node at (1,2) {$\circ$};
  \node at (1,2.4) {\tiny $+$};
  % Rectangle
  \draw (0,0) rectangle (4,4);
  \end{tikzpicture}};
    \node (B1) at (-2,-3) {
\begin{tikzpicture}[anchorbase, scale=.5]
  % R1
  \draw [line width=3, white] (0,2) to [out=0,in=180] (1,2) to [out=0,in=0] (1,2.8);
   \draw [semithick] (0,2) to [out=0,in=180] (1,2) to [out=0,in=0] (1,2.8);
  \draw [line width=3, white] (1,2.8) to [out=180,in=180] (1,2) -- (4,2);
  \draw [semithick] (1,2.8) to [out=180,in=180] (1,2) -- (4,2);
  \node at (1,2.8) {$\bullet$};
  \node at (1,3.2) {\tiny $-$};
  % Strand
  \draw [line width=3,white] (2,0) -- (2,4);
  \draw [semithick] (2,0) -- (2,4);
  % Rectangle
  \draw (0,0) rectangle (4,4);
  \end{tikzpicture}};
    \node (C1) at (-2,-6) {
\begin{tikzpicture}[anchorbase, scale=.5]
  % R1
  \draw [line width=3, white] (0,2) to [out=0,in=180] (1.5,2) to [out=0,in=-90] (2,2.5) to [out=90,in=0] (1.5,3);
   \draw [semithick] (0,2) to [out=0,in=180] (1.5,2) to [out=0,in=-90] (2,2.5) to [out=90,in=0] (1.5,3);
  \draw [line width=3, white] (1.5,3) to [out=180,in=90] (1,2.5) to [out=-90,in=180] (1.5,2) -- (4,2);
  \draw [semithick] (1.5,3) to [out=180,in=90] (1,2.5) to [out=-90,in=180] (1.5,2) -- (4,2);
  \node at (2,2.5) {$\bullet$};
  \node at (2.4,2.5) {\tiny $-$};
  % Strand
  \draw [line width=3,white] (2,0) -- (2,2) to [out=90,in=-90] (1.5,2.5) to [out=90,in=-90] (2,3) -- (2,4);
  \draw [semithick] (2,0) -- (2,2) to [out=90,in=-90] (1.5,2.5) to [out=90,in=-90] (2,3) -- (2,4);
  % Rectangle
  \draw (0,0) rectangle (4,4);
  \end{tikzpicture}};
    \node (D1) at (0,-9) {
\begin{tikzpicture}[anchorbase, scale=.5]
  % R1
  \draw [line width=3, white] (0,2) to [out=0,in=180] (1.5,2) to [out=0,in=-90] (2,2.5) to [out=90,in=0] (1.5,3);
   \draw [semithick] (0,2) to [out=0,in=180] (1.5,2) to [out=0,in=-90] (2,2.5) to [out=90,in=0] (1.5,3);
  \draw [line width=3, white] (1.5,3) to [out=180,in=90] (1,2.5) to [out=-90,in=180] (1.5,2) -- (4,2);
  \draw [semithick] (1.5,3) to [out=180,in=90] (1,2.5) to [out=-90,in=180] (1.5,2) -- (4,2);
  \node at (2,2.5) {$\bullet$};
  \node at (2.4,2.5) {\tiny $-$};
  % Strand
  \draw [line width=3,white] (2,0) to [out=90,in=-90]  (1,2) to [out=90,in=-90] (2,3) -- (2,4);
  \draw [semithick]  (2,0) to [out=90,in=-90]  (1,2) to [out=90,in=-90] (2,3) -- (2,4);
  % Rectangle
  \draw (0,0) rectangle (4,4);
  \end{tikzpicture}};
    \node (E1) at (2,-6) {
\begin{tikzpicture}[anchorbase, scale=.5]
  % R1
  \draw [line width=3, white] (0,2) to [out=0,in=180] (3,2) to [out=0,in=0] (3,2.8);
  \draw [semithick] (0,2) to [out=0,in=180] (3,2) to [out=0,in=0] (3,2.8);
  \draw [line width=3, white] (3,2.8) to [out=180,in=180] (3,2) -- (4,2);
  \draw [semithick] (3,2.8) to [out=180,in=180] (3,2) -- (4,2);
  \node at (3,2.8) {$\bullet$};
  \node at (3,3.2) {\tiny $-$};
  % Strand
  \draw [line width=3,white] (2,0) -- (2,4);
  \draw [semithick] (2,0) -- (2,4);
  % Rectangle
  \draw (0,0) rectangle (4,4);
  \end{tikzpicture}};
    \node (F) at (2,-3) {
\begin{tikzpicture}[anchorbase, scale=.5]
  % R1
  \draw [line width=3, white] (0,2) -- (4,2);
  \draw [semithick] (0,2) -- (4,2);
  % Strand
  \draw [line width=3,white] (2,0) -- (2,4);
  \draw [semithick] (2,0) -- (2,4);
  % twists
  \node at (3,2) {$\circ$};
  \node at (3,2.4) {\tiny $+$};
  % Rectangle
  \draw (0,0) rectangle (4,4);
  \end{tikzpicture}};
\draw [<->] (A) -- (B1);
\draw [<->] (B1) -- (C1);
\draw [<->] (C1) -- (D1);
\draw [<->] (D1) -- (E1);
\draw [<->] (E1) -- (F);
\draw [<->] (A) -- (F);
\node at (0,-4.5) {$\text{MM}_8$};
\end{tikzpicture}
\]

{\bf Double points from $F^{1}\times F^2$}

Now we consider double points from $F^1\times F^2$. We have isolated points in a foam, and 1-dimensional sets in isotopies. Let us first establish a local model. We consider $M_1\in F^1$ and $M_2\in F^2$, and we suppose that the parametrization has be chosen so that the seam is supported in the $v$ direction. By a codimension argument, one may assume that $\left(\frac{\partial f}{\partial v}\right)_{M_1}$ has non-zero $s$ coordinate (this will happen at isolated points through time). Then upon reparametrizing in the $u$ direction, one can assume that $\left(\frac{\partial f}{\partial u}\right)_{M_1}$ has no $s$ coordinate. Again because of the codimension, we can assume that $\pi(\left(\frac{\partial f}{\partial u}\right)_{M_1})\neq 0$ (this is a codimension $2$ condition, so this can be safely assumed even for isotopies), and up to rotation in the $x,y$ plane that it is parallel to the $x$ direction with positive coordinate.

At this point, we have:
\[
    (df)_{M_1}=\begin{pmatrix}  >0 & \cdot & \cdot \\ 0 & \cdot & \cdot \\\cdot & \cdot & \cdot \\ 0 & \neq 0 & \cdot \end{pmatrix}
  \]

  Let us now look at $M_2$. As in the $F^2\times F^2$ case, we reparametrize at $M_2$ so that $\left(\frac{\partial f}{\partial u}\right)_{M_2}$ has zero $s$-coordinate and $\left(\frac{\partial f}{\partial v}\right)_{M_2}$ has non-zero $s$ coordinate. Then we can assume (and this will fail through time at isolated points, see  the move from Equation~\eqref{StrandAroundVertex}) that $\pi\left(\left(\frac{\partial f}{\partial u}\right)_{M_1}\right)$ and $\pi\left(\left(\frac{\partial f}{\partial u}\right)_{M_2}\right)$ are not colinear. We thus have:

\[
  (df)_{M_1}=% [inline block 2: 81 envs, 59674 chars -> data_tex | \begin{pmatrix}  >0 & \cdot & \cdot \\ 0 & \cdot & \cdot \\\cdot & \cdot & \cdot \\ 0 & \neq 0 & \cdot \end{pmatrix}\qua...]
};
\draw [<->] (A) -- (B1);
\draw [<->] (A) -- (B2);
\draw [<->] (B1) -- (D);
\draw [<->] (B2) -- (C2);
\draw [<->] (C2) -- (D);
\end{tikzpicture}
\end{equation}

Again, similar moves should be considered with other framing changes, other crossing signs and also up to mirror image.

{\bf Double points in $F^1\times F^1$}

We are looking at a situation of codimension $7$ (twice $1$ for restriction to $F$, then twice $1$ for restriction to $F^1$, and $3$ for the equality of the $x,y,s$ coordinates). Such a situation is thus not generic for a foam, but does happen at isolated points during a foam isotopy.

One may assume that one of the $x$ or $y$ entries of the $\frac{\partial f}{\partial u}$ derivatives at $M_1$ and $M_2$, the two points of interest, is non-zero. Up to rotation, the one at $M_1$ can be assumed to be parallel to the $x$ direction. Then the other one can be assumed to have a non-zero $y$ coordinate: the intersection is transverse. One thus has a singular situation of the following kind:

\[
  % [inline block 3: 79 envs, 75500 chars -> data_tex | \begin{tikzpicture}[anchorbase,scale=.4]       % Boundary...]
};
\draw [<->] (A) -- (B1);
\draw [<->] (A) -- (B2);
\draw [<->] (B1) -- (C1);
\draw [<->] (B2) -- (C2);
\draw [<->] (C1) -- (D1);
\draw [<->] (C2) -- (D2);
\draw [<->] (D1) -- (E);
\draw [<->] (D2) -- (E);
\node at (0,-4) {$\text{MM}_{10}$};
\end{tikzpicture}
\]

This exhausts the classification of multiple points. We now consider the neighborhood of a single point.

%%%%%%%%%%%%%%%%%%%%%%%%%%%%%
\subsection{Neighborhood of a point on $F^{2}$} \label{subsec:nF2}
%%%%%%%%%%%%%%%%%%%%%%%%%%%%%%%%%

When framing is ignored, the analysis to be run here is classical and equivalent to Morse theory techniques. As foam generators, one gets birth cobordisms, death cobordisms, and saddles.

    \[
      \begin{tikzpicture}[anchorbase]
        \node at (0,0)
        {
          \begin{tikzpicture}[anchorbase,scale=.5]
            \draw [semithick] (2,2) circle (1);
            \draw (0,0) rectangle (4,4);
          \end{tikzpicture}
        };
        \node at (2.1,0)
        {
          \begin{tikzpicture}[anchorbase,scale=.5]
                     \draw (0,0) rectangle (4,4);
          \end{tikzpicture}
        };
      \end{tikzpicture}
      \quad \text{or}\quad
      \begin{tikzpicture}[anchorbase]
        \node at (0,0)
        {
          \begin{tikzpicture}[anchorbase,scale=.5]
            \draw [semithick] (0,0) to [out=45,in=-45] (0,4);
            \draw [semithick] (4,0) to [out=135,in=-135] (4,4);
            \draw (0,0) rectangle (4,4);
          \end{tikzpicture}
        };
        \node at (2.1,0)
        {
          \begin{tikzpicture}[anchorbase,scale=.5]
            \draw [semithick] (0,0) to [out=45,in=135] (4,0);
            \draw [semithick] (0,4) to [out=-45,in=-135] (4,4);
            \draw (0,0) rectangle (4,4);
          \end{tikzpicture}
        };
      \end{tikzpicture}
    \]

When adding the time dimension, one gets classical crossingless movie moves, which reduce to MM11, as well as MM12 and MM13 which will need additional attention to incorporate the framing data.

Recall from the previous paragraphs that Morse points occur when we assume that both $\frac{\partial f}{\partial u}$ and $\frac{\partial f}{\partial v}$ have vanishing $s$ coordinates. This corresponds to a codimension $3$ condition, while the graph of one function is of dimension $3$. This thus occurs at isolated points. Denote:
\[
\frac{\partial f}{\partial u}= \begin{pmatrix} a_1 \\ a_2 \\ a_3 \\0 \end{pmatrix},\quad \frac{\partial f}{\partial v}=\begin{pmatrix}b_1 \\ b_2 \\ b_3 \\0 \end{pmatrix}.
\]
By $\rm{SL}_2$ action in the $(x,y)$ plane one reduces as usual to (unless the two vectors are colinear, which will yield moves $\rm{MM}_{12}$ and $\rm{MM}_{13}$):
\[
  df_M=\begin{pmatrix} 1 & 0 & c_1 \\ 0 & \pm 1 & c_2 \\ a_3 & b_3 & c_3 \\ 0 & 0 & c_4 \end{pmatrix}
\]
Up to symmetry we will assume the entry $b_2$ to be $1$. One can then write:
\[
  f(u_0+\epsilon_1,v_0+\epsilon_2,w_0)=\begin{pmatrix} \epsilon_1 \\ \epsilon_2 \\ \cdot \\ d_4\epsilon_1^2+e_4\epsilon_1\epsilon_2+g_4\epsilon_2^2 \end{pmatrix}
\]
By a codimension argument one can assume that $\begin{vmatrix}d_4 & e_4 \\ e_4 & d_4\end{vmatrix}\neq 0$ (this will fail through time at isolated points, see the movie move~$\rm{MM}_{11}$), yielding a cap, cup, or saddle generator.

Now for the framing, since we have:
\[
  df_M=\begin{pmatrix} 1 & 0 & c_1 \\ 0 & \pm 1 & c_2 \\ a_3 & b_3 & c_3 \\ 0 & 0 & c_4 \end{pmatrix}
\]
one can replace $w$ by a linear combination of $u$, $v$ and $w$ to get:
\[
  df_M=\begin{pmatrix} 1 & 0 & 0 \\ 0 & \pm 1 & 0 \\ a_3 & b_3 & c_3 \\ 0 & 0 & c_4 \end{pmatrix}
\]
Through time it might happen that a framing change occurs at the Morse point ($c_4=0$, see the analysis on page~\pageref{page:b4=0}), but for a generic foam $c_4\neq 0$ and thus:
\begin{equation} \label{eq:framingforcup}
  \frac{\partial f}{\partial w}\notin \Span_{\R}\left(\frac{\partial f}{\partial u},\frac{\partial f}{\partial v}, \vec{z}\right)
\end{equation}
Since this is an open condition (as the three vectors span a space of dimension $3$), it will remain true around $M$ and thus the framing does not become singular on the generators.

Let us now go back to the case where $\begin{pmatrix}d_4 & e_4 \\ e_4 & d_4\end{pmatrix}$ is of rank $1$. One can reduce to the following situation:
\[
  f(u_0+\epsilon_1,v_0+\epsilon_2,w_0)=f(u_0,v_0,w_0)+\begin{pmatrix} \epsilon_1 \\ \epsilon_2 \\ \cdot \\ d_4 \epsilon_1^2+g_4 \epsilon_2^3\end{pmatrix},\;d_4\neq 0,\;g_4\neq 0
\]
This yields the classical movie move MM11, with generically no framing change.

\[
  \begin{tikzpicture}[anchorbase]
    \node (A) at (0,0) {
\begin{tikzpicture}[anchorbase, scale=.5]
  % Strands
    \draw [semithick] (0,0) to [out=45,in=-45] (0,4);
  % Rectangle
  \draw (0,0) rectangle (4,4);
  \end{tikzpicture}};
    \node (B1) at (-2,-3) {
\begin{tikzpicture}[anchorbase, scale=.5]
  % Strands
  \draw [semithick] (0,0) to [out=45,in=-45] (0,4);
  \draw [semithick] (2,2) circle (.5);
  % Rectangle
  \draw (0,0) rectangle (4,4);
  \end{tikzpicture}};
    \node (C) at (0,-6) {
\begin{tikzpicture}[anchorbase, scale=.5]
  % Strands
  \draw [semithick] (0,0) to [out=45,in=180] (1,1.7) to [out=0,in=180] (2,1.5) to [out=0,in=-90] (2.5,2) to [out=90,in=0] (2,2.5) to [out=180,in=0] (1,2.3) to [out=180,in=-45] (0,4);
  % Rectangle
  \draw (0,0) rectangle (4,4);
  \end{tikzpicture}};
\draw [<->] (A) -- (B1);
\draw [<->] (B1) -- (C);
\draw [<->] (A) to [out=-60,in=60] (C);
\node at (0,-3) {$\text{MM}_{11}$};
\end{tikzpicture}
\]

Let us now consider the case where the $\partial u$ and $\partial v$ derivatives have colinear $x,y$ projections. Up to rotation in the $x,y$ plane one can assume that:
\[
  \frac{\partial f}{\partial u}= \begin{pmatrix} a_1 \\ 0 \\ a_3 \\0 \end{pmatrix},\quad \frac{\partial f}{\partial v}=\begin{pmatrix}b_1 \\ 0 \\ b_3 \\0 \end{pmatrix},b_1>0.
\]

% \[
%   \frac{\partial f}{\partial u}= \begin{pmatrix} a_1 \\ a_2 \\ a_3 \\0 \end{pmatrix},\quad \frac{\partial f}{\partial v}=\begin{pmatrix}b_1 \\ b_2 \\ b_3 \\0 \end{pmatrix}.
% \]
% and:

One can assume that $\left|\begin{pmatrix} d_4 & e_4 \\ e_4 & g_4\end{pmatrix}\right|\neq 0$. This implies that there exists a change in coordinates $u,v$ such that:
\[
  \frac{\partial^2 f}{\partial u^2}= \begin{pmatrix} d_1 \\ d_2 \\ d_3 \\ \pm 1 \end{pmatrix},\quad \frac{\partial^2 f}{\partial u\partial v}=\begin{pmatrix}e_1 \\ e_2 \\ e_3 \\0 \end{pmatrix},  \frac{\partial^2 f}{\partial v^2}= \begin{pmatrix} g_1 \\ g_2 \\ g_3 \\ \pm 1 \end{pmatrix}.
\]

We first consider the case where the last line is $\epsilon_1^2+\epsilon_2^2$. The case with two minus signs will follow by mirror. The mixed case will be considered later. This gives as Taylor expansion:
\[
  f(u_0+\epsilon_1,v_0+\epsilon_2,w_0)=\begin{pmatrix} a_1 \epsilon_1+b_1\epsilon_2 \\ d_2 \epsilon_1^2+e_2\epsilon_1\epsilon_2+ g_2\epsilon_2^2 \\ a_3\epsilon_1+b_3\epsilon_2 \\ \epsilon_1^2+\epsilon_2^2 \end{pmatrix}
\]

Let us focus on the coordinates $x,y$ and $s$, and fix a given value $s=r$. Since the three equations are homogeneous, the surface drawn is topologically just the cone on one of these curves, say for $r=1$. At $r=1$, we consider the set of points such that $\epsilon_1^2+\epsilon_2^2=1$, which we can reparametrize as $\epsilon_1=\cos(\theta)$ and $\epsilon_2=\sin(\theta)$ for $\theta\in [-\pi,\pi]$. So we care about the following parametric curve:
\[
  \{(a_1\cos(\theta)+b_1\sin(\theta),d_2\cos(\theta)^2+e_2\cos(\theta)\sin(\theta)+g_2\sin(\theta)^2),\quad \theta\in [-\pi,\pi]\}
\]
Below is an example with parameters $a_1=b_1=1$ and $d_2=1$, $e_2=2$, $g_2=3$.
  \[
\begin{tikzpicture}
    \begin{axis}[
            xmin=-2,xmax=2,
            ymin=-1,ymax=4,
            grid=both,
            ]
            \addplot [domain=0:360,samples=100]({cos(x)+sin(x)},{cos(x)^2+2*cos(x)*sin(x)+3*sin(x)^2}); 
    \end{axis}
\end{tikzpicture}
\]

We want to claim that the above curve is representative of the generic situation. Let us go back to the curve:
\[
  \left\{
  \begin{pmatrix} a_1 \epsilon_1+b_1\epsilon_2 \\ d_2 \epsilon_1^2+e_2\epsilon_1\epsilon_2+ g_2\epsilon_2^2 \end{pmatrix},\; \epsilon_1^2+\epsilon_2^2=1
  \right\}
\]

Up to changing coordinates: $\epsilon_1=\frac{a_1\epsilon'_1+b_1\epsilon'_2}{\sqrt{a_1^2+b_1^2}}$ and $\epsilon_2=\frac{b_1\epsilon'_1-a_1\epsilon'_2}{\sqrt{a_1^2+b_1^2}}$, one reduces to:
\[
  \left\{
  \begin{pmatrix} \sqrt{a_1^2+b_1^2} \epsilon'_1 \\ d'_2 {\epsilon'_1}^2+e'_2\epsilon'_1\epsilon'_2+ f'_2{\epsilon'_2}^2 \end{pmatrix},\; {\epsilon'_1}^2+{\epsilon'_2}^2=1
  \right\}
\]

Let us now look at multiple points: assume that $(\xi_1,\xi_2)$ and $(\xi'_1,\xi'_2)$ yield the same point. From the first coordinate, one reads that $\xi_1=\xi'_1$, and thus from the equation ${\epsilon'_1}^2+{\epsilon'_2}^2=1$ one gets that $\xi_2=\pm \xi'_2$. Now, $(\xi_1,\xi_2)$ and $(\xi_1,-\xi_2)$ produce the same point if $2e'_2\xi_1\xi_2=0$. Generically, $e'_2\neq 0$ and one gets $\xi_1=0$ and thus as only double points, $(0,1)$ and $(0,-1)$. Thus the curve is a circle with a single double point, the only topological solution being the one depicted above.

Adding up the time parameter, we will get a framed version of the classical move $\rm{MM}_{12}$. Let us focus on the framing data, and compare to the situation in Equation~\eqref{eq:framingforcup}. Now
\[
  \Span_R\left(\frac{\partial f}{\partial u},\frac{\partial f}{\partial v},\vec{z}\right)
\]
is only two-dimensional, but will become three-dimensional when perturbed. Although at the singular point the framing does not belong to this spanned space, this is not an open condition anymore. To see this, notice that one can only reduce to:
\[
  \frac{\partial f}{\partial w}=\begin{pmatrix}0 \\ c_2 \\ c_3 \\ c_4 \end{pmatrix}
\]
The non-vanishing of the $y$-coordinate will dictate the shape of the framing, and justifies the following move:

\[
  \begin{tikzpicture}[anchorbase]
    \node (A) at (0,0) {
\begin{tikzpicture}[anchorbase, scale=.5]
  % Strands
  \draw [line width=3, white] (1,2) to [out=-90,in=-180] (1.5,1) to [out=0,in=180] (2.5,3) to [out=0,in=90] (3,2);
  \draw [semithick] (1,2) to [out=-90,in=-180] (1.5,1) to [out=0,in=180] (2.5,3) to [out=0,in=90] (3,2);
  \draw [line width=3, white] (1,2) to [out=90,in=-180] (1.5,3) to [out=0,in=180] (2.5,1) to [out=0,in=-90] (3,2);
  \draw [semithick] (1,2) to [out=90,in=-180] (1.5,3) to [out=0,in=180] (2.5,1) to [out=0,in=-90] (3,2);
  \node at (3,2) {$\bullet$};
  \node at (3.4,2) {\tiny $+$};
  \node at (1,2) {$\circ$};
  \node at (.5,2) {\tiny $+$};
  % Rectangle
  \draw (0,0) rectangle (4,4);
  \end{tikzpicture}};
    \node (B1) at (-2,-3) {
\begin{tikzpicture}[anchorbase, scale=.5]
  % Strands
  \draw [semithick] (1.5,2) to [out=-90,in=-180] (2.25,1) to [out=0,in=-90] (3,2) to [out=90,in=0] (2.25,3) to [out=180,in=90] (1.5,2);
  \node at (1.5,2) {$\bullet$};
  \node at (1,2) {\tiny $-$};
  \node at (3,2) {$\bullet$};
  \node at (3.5,2) {\tiny $+$};
   % Rectangle
  \draw (0,0) rectangle (4,4);
  \end{tikzpicture}};
    \node (B2) at (2,-3) {
\begin{tikzpicture}[anchorbase, scale=.5]
  % Strands
  \draw [semithick] (1,2) to [out=-90,in=-180] (1.75,1) to [out=0,in=-90] (2.5,2) to [out=90,in=0] (1.75,3) to [out=180,in=90] (1,2);
  \node at (1,2) {$\circ$};
  \node at (.5,2) {\tiny $+$};
  \node at (2.5,2) {$\circ$};
  \node at (3,2) {\tiny $-$};
  % Rectangle
  \draw (0,0) rectangle (4,4);
  \end{tikzpicture}};
    \node (C) at (0,-6) {
\begin{tikzpicture}[anchorbase, scale=.5]
  % Strands
  \draw [semithick] (1,2) to [out=-90,in=-180] (2,1) to [out=0,in=-90] (3,2) to [out=90,in=0] (2,3) to [out=180,in=90] (1,2);
  % Rectangle
  \draw (0,0) rectangle (4,4);
  \end{tikzpicture}};
    \node (D) at (0,-9) {
\begin{tikzpicture}[anchorbase, scale=.5]
  % Strands
  % Rectangle
  \draw (0,0) rectangle (4,4);
  \end{tikzpicture}};
\draw [<->] (A) -- (B1);
\draw [<->] (A) -- (B2);
\draw [<->] (B1) -- (C);
\draw [<->] (B2) -- (C);
\draw [<->] (C) -- (D);
\node at (0,-3) {$\rm{MM}_{12}$};
\end{tikzpicture}
\quad \text{or}\quad
  \begin{tikzpicture}[anchorbase]
    \node (A) at (0,0) {
\begin{tikzpicture}[anchorbase, scale=.5]
  % Strands
  \draw [line width=3, white] (1,2) to [out=90,in=-180] (1.5,3) to [out=0,in=180] (2.5,1) to [out=0,in=-90] (3,2);
  \draw [semithick] (1,2) to [out=90,in=-180] (1.5,3) to [out=0,in=180] (2.5,1) to [out=0,in=-90] (3,2);
  \draw [line width=3, white] (1,2) to [out=-90,in=-180] (1.5,1) to [out=0,in=180] (2.5,3) to [out=0,in=90] (3,2);
  \draw [semithick] (1,2) to [out=-90,in=-180] (1.5,1) to [out=0,in=180] (2.5,3) to [out=0,in=90] (3,2);
  \node at (3,2) {$\bullet$};
  \node at (3.4,2) {\tiny $-$};
  \node at (1,2) {$\circ$};
  \node at (.5,2) {\tiny $-$};
  % Rectangle
  \draw (0,0) rectangle (4,4);
  \end{tikzpicture}};
    \node (B1) at (-2,-3) {
\begin{tikzpicture}[anchorbase, scale=.5]
  % Strands
  \draw [semithick] (1.5,2) to [out=-90,in=-180] (2.25,1) to [out=0,in=-90] (3,2) to [out=90,in=0] (2.25,3) to [out=180,in=90] (1.5,2);
  \node at (1.5,2) {$\bullet$};
  \node at (1,2) {\tiny $+$};
  \node at (3,2) {$\bullet$};
  \node at (3.5,2) {\tiny $-$};
   % Rectangle
  \draw (0,0) rectangle (4,4);
  \end{tikzpicture}};
    \node (B2) at (2,-3) {
\begin{tikzpicture}[anchorbase, scale=.5]
  % Strands
  \draw [semithick] (1,2) to [out=-90,in=-180] (1.75,1) to [out=0,in=-90] (2.5,2) to [out=90,in=0] (1.75,3) to [out=180,in=90] (1,2);
  \node at (1,2) {$\circ$};
  \node at (.5,2) {\tiny $-$};
  \node at (2.5,2) {$\circ$};
  \node at (3,2) {\tiny $+$};
  % Rectangle
  \draw (0,0) rectangle (4,4);
  \end{tikzpicture}};
    \node (C) at (0,-6) {
\begin{tikzpicture}[anchorbase, scale=.5]
  % Strands
  \draw [semithick] (1,2) to [out=-90,in=-180] (2,1) to [out=0,in=-90] (3,2) to [out=90,in=0] (2,3) to [out=180,in=90] (1,2);
  % Rectangle
  \draw (0,0) rectangle (4,4);
  \end{tikzpicture}};
    \node (D) at (0,-9) {
\begin{tikzpicture}[anchorbase, scale=.5]
  % Strands
  % Rectangle
  \draw (0,0) rectangle (4,4);
  \end{tikzpicture}};
\draw [<->] (A) -- (B1);
\draw [<->] (A) -- (B2);
\draw [<->] (B1) -- (C);
\draw [<->] (B2) -- (C);
\draw [<->] (C) -- (D);
\node at (0,-3) {$\rm{MM}_{12}$};
\end{tikzpicture}
\]

Note that at the bottom, the side of the circle used to pair the half-twists together is irrelevant, thanks to Equation~\ref{eq:twistpaircup}.

We now go back to the case where the last line in the Taylor expansion is $\epsilon_1^2-\epsilon_2^2$:
\[
  f(u_0+\epsilon_1,v_0+\epsilon_2,w_0)=\begin{pmatrix} a_1 \epsilon_1+b_1\epsilon_2 \\ d_2 \epsilon_1^2+e_2\epsilon_1\epsilon_2+ g_2\epsilon_2^2 \\ a_3\epsilon_1+b_3\epsilon_2 \\ \epsilon_1^2-\epsilon_2^2 \end{pmatrix}
\]

Looking at the last line in the above matrix, one sees that $s=0$ when $\epsilon_1=\pm \epsilon_2$. The $s$-coordinate is then positive or negative as illustrated below:
\[
  \begin{tikzpicture}[anchorbase]
    \draw [opacity=.5,->] (-2,0) -- (2,0);
    \node at (2.3,0) {\small $\epsilon_1$};
    \draw [opacity=.5,->] (0,-2) -- (0,2);
    \node at (0,2.3) {\small $\epsilon_2$};
    \draw [semithick, red] (-2,-2) -- (2,2);
    \node [rotate=45] at (2.5,2.5) {\small $\epsilon_1=\epsilon_2$};
    \draw [semithick, red] (-2,2) -- (2,-2);
    \node [rotate=-45] at (2.5,-2.5) {\small $\epsilon_1=-\epsilon_2$};
    \node at (1,0) {$s>0$};
    \node at (-1,0) {$s>0$};
    \node at (0,1) {$s<0$};
    \node at (0,-1) {$s<0$};
  \end{tikzpicture}
\]

At $s=0$, then one has:
\[
  f(u_0+\epsilon_1,v_0\pm\epsilon_1,w_0)=\begin{pmatrix} (a_1\pm b_1) \epsilon_1 \\ (d_2\pm e_2 +g_2) \epsilon_1^2 \\ \cdot \\ 0 \end{pmatrix}
\]

One may assume that generically $a_1\pm b_1\neq 0$ and $d_2\pm e_2+g_2\neq 0$. One then sees two parabolas, tangent at one point.

Fixing $s=\mu^2$ with $\mu>0$, one has that $\epsilon_1^2=\epsilon_2^2+\mu^2\geq \mu^2$, thus one expects to see two connected components, depending that $\epsilon_1>\mu$ or $\epsilon_1<-\mu$. Let us first focus on $\epsilon_1>\mu$. The curve one will see is close to the two half-parabolas formed by the $\epsilon_1\geq 0$ part at $s=0$. Similarly, the curve corresponding to $\epsilon_1<-\mu$ is close to the two half-lines formed by the $\epsilon_1\leq 0$ part at $s=0$. Depending on the relative signs of $a_1\pm b_1$, the parts at $s=0$ assemble either into two smooth curves with one intersection point, or two singular curves meeting at their singular point (that then get smoothed at $\mu\neq 0$). Replacing $s=\mu^2$ by $s=-\mu^2$ passes from one to the other situation.

\[
  \begin{tikzpicture}[anchorbase]
    \node (A) at (0,0) {
      \begin{tikzpicture}[anchorbase,scale=.3]
        \draw [red, thick,  domain=-2:2, samples=40] 
        plot ({\x}, {\x*\x});
        \draw [blue, thick,  domain=-2:2, samples=40] 
        plot ({\x}, {-.5*\x*\x});
      \end{tikzpicture}
    };
    \node (B) at (3,2){
      \begin{tikzpicture}[anchorbase,scale=.3]
        \draw [green, thick,  domain=-2:0, samples=40] 
        plot ({\x}, {\x*\x});
        \draw [orange, thick,  domain=0:2, samples=40] 
        plot ({\x}, {\x*\x});
        \draw [orange, thick,  domain=-2:0, samples=40] 
        plot ({\x}, {-.5*\x*\x});
        \draw [green, thick,  domain=0:2, samples=40] 
        plot ({\x}, {-.5*\x*\x});
      \end{tikzpicture}
    };
    \node (C) at (6,2){
      \begin{tikzpicture}[anchorbase,scale=.3]
        \draw [orange, thick,  domain=0:2, samples=40] 
        plot ({\x}, {\x*\x});
        \draw [orange, thick,  domain=-2:0, samples=40] 
        plot ({\x}, {-.5*\x*\x});
        \draw [white, line width=3,  domain=-2:0, samples=40] 
        plot ({\x}, {\x*\x});
        \draw [white, line width=3,  domain=0:2, samples=40] 
        plot ({\x}, {-.5*\x*\x});
        \draw [green, thick,  domain=-2:0, samples=40] 
        plot ({\x}, {\x*\x});
        \draw [green, thick,  domain=0:2, samples=40] 
        plot ({\x}, {-.5*\x*\x});
      \end{tikzpicture}
    };
    \node (D) at (3,-2){
      \begin{tikzpicture}[anchorbase,scale=.3]
        \draw [green, thick,  domain=-2:0, samples=40] 
        plot ({\x}, {\x*\x});
        \draw [orange, thick,  domain=0:2, samples=40] 
        plot ({\x}, {\x*\x});
        \draw [green, thick,  domain=-2:0, samples=40] 
        plot ({\x}, {-.5*\x*\x});
        \draw [orange, thick,  domain=0:2, samples=40] 
        plot ({\x}, {-.5*\x*\x});
      \end{tikzpicture}
    };
    \node (E) at (6,-2){
      \begin{tikzpicture}[anchorbase,scale=.3]
        \draw [thick, green] (-2,4) to [out=-80,in=120] (-.5,.3) to [out=-60,in=40] (-.5,-.3) to [out=-140,in=60] (-2,-2);
        \draw [thick, orange] (2,4) to [out=-100,in=60] (.5,.3) to [out=-120,in=140] (.5,-.3) to [out=-40,in=120] (2,-2);
      \end{tikzpicture}
    };
    \draw [->] (A) -- (B);
    \draw [->] (B) -- (C);
    \draw [->] (A) -- (D);
    \draw [->] (D) -- (E);
  \end{tikzpicture}
\]

Notice that if the $x$-coordinate takes the value $0$, then one has: $\epsilon_2=-\frac{a_1}{b_1}\epsilon_1$. Replacing $\epsilon_2$ by this value in the last line implies that if $a_1>b_1$, then this can only occur for negative value of the $s$-coordinate, while if $a_1<b_1$, this will only occur for positive values of $s$. Since both positive and negative values can be taken in the $x$-coordinate, this implies the following: if $a_1>b_1$, then the two connected components living at fixed $s$-coordinate of positive value live each on one side of the $x=0$ line (and in particular do not meet); if $a_1<b_1$, then the two connected components living at fixed $s$-coordinate of negative value do not meet. This justifies that the green and orange lines at the bottom right corner of the above picture are disjoint.

For codimension reason, the framing can be assumed to be generic at the singular point, and then transported around, giving (notice the framing changes on the crossingless picture):
\[
  \begin{tikzpicture}[anchorbase]
    \node (A) at (0,0) {
      \begin{tikzpicture}[anchorbase,scale=.3]
        \draw [red, thick,  domain=-2:2, samples=40] 
        plot ({\x}, {\x*\x});
        \fill [opacity=.5,red,  domain=-2:2, samples=40] 
        plot ({\x}, {\x*\x}) --   plot ({-\x}, {1.1*\x*\x+.2});
        \draw [blue, thick,  domain=-2:2, samples=40] 
        plot ({\x}, {-.5*\x*\x});
        \fill [opacity=.5,blue,  domain=-2:2, samples=40] 
        plot ({\x}, {-.5*\x*\x}) -- plot ({-\x}, {-.45*\x*\x+.2});
      \end{tikzpicture}
    };
    \node (B) at (3,1.5){
      \begin{tikzpicture}[anchorbase,scale=.3]
        \draw [orange, thick,  domain=0:2, samples=40] 
        plot ({\x}, {\x*\x});
        \fill [opacity=.5,orange,  domain=0:2, samples=40] 
        plot ({\x}, {\x*\x}) --        plot ({2-\x}, {1.1*(2-\x)*(2-\x)+.2});
        \draw [orange, thick,  domain=-2:0, samples=40] 
        plot ({\x}, {-.5*\x*\x});
        \fill [opacity=.5,orange,  domain=-2:0, samples=40] 
        plot ({\x}, {-.5*\x*\x}) -- plot ({-2-\x}, {-.45*(-2-\x)*(-2-\x)+.2});
        \draw [white, line width=3,  domain=-2:0, samples=40] 
        plot ({\x}, {\x*\x});
        \draw [white, line width=3,  domain=0:2, samples=40] 
        plot ({\x}, {-.5*\x*\x});
        \draw [green, thick,  domain=-2:0, samples=40] 
        plot ({\x}, {\x*\x});
        \fill [opacity=.5,green, thick,  domain=-2:0, samples=40] 
        plot ({\x}, {\x*\x}) -- plot ({-2-\x}, {1.1*(-2-\x)*(-2-\x)+.2});
        \draw [green, thick,  domain=0:2, samples=40] 
        plot ({\x}, {-.5*\x*\x});
        \fill [opacity=.5,green, domain=0:2, samples=40] 
        plot ({\x}, {-.5*\x*\x}) --        plot ({2-\x}, {-.45*(2-\x)*(2-\x)+.2});
      \end{tikzpicture}
    };
    \node (D) at (3,-1.5){
      \begin{tikzpicture}[anchorbase,scale=.3]
        \draw [thick, green] (-2,4) to [out=-80,in=120] (-.5,.3) to [out=-60,in=40] (-.5,-.3) to [out=-140,in=60] (-2,-2);
        \fill [opacity=.5, green] (-2,4) to [out=-80,in=120] (-.5,.3) to [out=-60,in=40] (-.5,-.3) to [out=-140,in=60] (-2,-2) -- (-2.1,-1.8) to [out=60,in=-140] (-.5,-.1) to [out=40,in=-60] (-.5,.7) to [out=120,in=-80] (-1.9,4.2);
        \draw [thick, orange] (2,4) to [out=-100,in=60] (.5,.3) to [out=-120,in=140] (.5,-.3) to [out=-40,in=120] (2,-2);
        \fill [opacity=.5, orange] (2,4) to [out=-100,in=60] (.5,.3) to [out=-120,in=140] (.5,-.3) to [out=-40,in=120] (2,-2) -- (2.1,-1.8) to [out=120,in=-40] (.5,-.1) to [out=140,in=-120] (.4,.5) to [out=60,in=-100] (1.9,4.2);
      \end{tikzpicture}
    };
    \draw [->] (A) -- (B);
    \draw [->] (A) -- (D);
  \end{tikzpicture}
\]

When one adds the time parameter, the place where the $\partial u$ and $\partial v$ derivatives align will be taken away from the Morse singularity, in one or the other side of the saddle. One gets the following move (and all variants of it):

\[
  \begin{tikzpicture}[anchorbase]
    \node (A) at (0,0) {
\begin{tikzpicture}[anchorbase, scale=.5]
  % Strands
  \draw [line width=3, white] (0,0) -- (4,4);
  \draw [semithick] (0,0) -- (4,4);
  \draw [line width=3, white] (0,4) -- (4,0);
  \draw [semithick] (0,4) -- (4,0);
  % Twists
  % Rectangle
  \draw (0,0) rectangle (4,4);
  \end{tikzpicture}};
    \node (B1) at (-2,-3) {
\begin{tikzpicture}[anchorbase, scale=.5]
  % Strands
  \draw [line width=3, white] (0,0) -- (4,4);
  \draw [semithick] (0,0) -- (4,4);
  \draw [line width=3, white] (0,4) -- (4,0);
  \draw [semithick] (0,4) -- (4,0);
  % Twists
  \node at (2.5,1.5) {$\bullet$};
  \node at (2.8,1.8) {\tiny $+$};
  \node at (3,1) {$\bullet$};
  \node at (3.3,1.3) {\tiny $-$};
  % Rectangle
  \draw (0,0) rectangle (4,4);
  \end{tikzpicture}};
    \node (B2) at (2,-3) {
\begin{tikzpicture}[anchorbase, scale=.5]
  % Strands
  \draw [line width=3, white] (0,0) -- (4,4);
  \draw [semithick] (0,0) -- (4,4);
  \draw [line width=3, white] (0,4) -- (4,0);
  \draw [semithick] (0,4) -- (4,0);
  % Twists
  \node at (1.5,1.5) {$\circ$};
  \node at (1.8,1.2) {\tiny $+$};
  \node at (1,1) {$\circ$};
  \node at (1.3,.7) {\tiny $-$};
  % Rectangle
  \draw (0,0) rectangle (4,4);
  \end{tikzpicture}};
    \node (C2) at (2,-6) {
\begin{tikzpicture}[anchorbase, scale=.5]
  % Strands
  \draw [draw=white, line width=3] (4,4) to [out=-135,in=0] (2,1.5) to [out=180,in=-90] (1.5,2);
  \draw [semithick] (4,4) to [out=-135,in=0] (2,1.5) to [out=180,in=-90] (1.5,2);
  \draw [draw=white, line width=3] (1.5,2) to [out=90,in=180] (2,2.5) to [out=0,in=135] (4,0);
  \draw [semithick] (1.5,2) to [out=90,in=180] (2,2.5) to [out=0,in=135] (4,0);
  \draw [semithick] (0,0) to [out=45,in=-90] (1,2) to [out=90,in=-45] (0,4);
  % Twists
  \node at (.9,1.2) {$\circ$};
  \node at (.4,1.3) {\tiny $-$};
  \node at (2,1.5) {$\circ$};
  \node at (2,1.1) {\tiny $+$};
    % Rectangle
  \draw (0,0) rectangle (4,4);
  \end{tikzpicture}};
    \node (C1) at (-2,-6) {
\begin{tikzpicture}[anchorbase, scale=.5]
  % Strands
  \draw [semithick] (2.5,2) to [out=90,in=0] (2,2.5) to [out=180,in=45] (0,0);
  \draw [draw=white, line width=3] (0,4) to [out=-45,in=180] (2,1.5) to [out=0,in=-90] (2.5,2);
  \draw [semithick] (0,4) to [out=-45,in=180] (2,1.5) to [out=0,in=-90] (2.5,2);
  \draw [semithick] (4,0) to [out=135,in=-90] (3,2) to [out=90,in=-135] (4,4);
  % Twists
  \node at (2,1.5) {$\bullet$};
  \node at (2,1.1) {\tiny $+$};
  \node at (3.1,1.2) {$\bullet$};
  \node at (3.5,1.2) {\tiny $-$};
  % Rectangle
  \draw (0,0) rectangle (4,4);
  \end{tikzpicture}};
    \node (D) at (0,-9) {
\begin{tikzpicture}[anchorbase, scale=.5]
  % Strands
  \draw [semithick] (0,0) to [out=45,in=-90] (1,2) to [out=90,in=-45] (0,4);
  \draw [semithick] (4,0) to [out=135,in=-90] (3,2) to [out=90,in=-135] (4,4);
  \node at (1,2) {$\circ$};
  \node at (1.4,2) {\tiny $-$};
  \node at (3,2) {$\bullet$};
  \node at (2.6,2) {\tiny $-$};
  % Rectangle
  \draw (0,0) rectangle (4,4);
  \end{tikzpicture}};
\draw [<->] (A) -- (B1);
\draw [<->] (A) -- (B2);
\draw [<->] (B1) -- (C1);
\draw [<->] (B2) -- (C2);
\draw [<->] (C1) -- (D);
\draw [<->] (C2) -- (D);
\node at (0,-4.5) {$\text{MM}_{13}$};
\end{tikzpicture}
\]

\vspace{2cm}

Let us now go back to framing considerations we had eluded at the beginning of our analysis. The graph of $f$ on $N(F)\times [0,1]$ is 4-dimensional. Requiring that the framing lies in the subspace generated by the tangent plane to $F$ and the vertical direction is a codimension 1 condition. Furthermore, the restriction to $F$ is also a codimension $1$ condition. Thus the set of points where the framing is vertical is of dimension $1$ in a generic foam, and of dimension $2$ in a time-family of foams.

Assume that $(df)_M=\begin{pmatrix} \fbox{$\begin{matrix} \\ \\ A \\ ~ \end{matrix}$} & \fbox{$\begin{matrix} \\ \\ B \\~  \end{matrix}$} & \fbox{$A+B+\begin{matrix} 0\\ 0\\ 1\\ 0 \end{matrix}$}   \end{pmatrix}$. Up to change of coordinates in $u$ and $v$ we can assume that $\left(\frac{\partial f}{\partial u}\right)_M$ has zero $s$ coordinate. We first assume that $\left(\frac{\partial f}{\partial u}\right)$ has non-zero coordinates in $x$ or $y$, which allows us to reduce to the case where its $y$-coordinate is zero and the $x$-coordinate is strictly positive. Then one can change coordinates in $u,v,w$ so that:
\[
  \left(df\right)_M=\begin{pmatrix}
    a_1 & 0 & \lambda a_1  \\
    0 & b_2 & \mu b_2 \\
    a_3 & b_3 & \mu b_3 + \eta \\
    0 & b_4 & \mu b_4 
  \end{pmatrix}\quad ,\quad a_1>0, \eta \neq 0
\]

Let us first assume that $b_4\neq 0$. In that case, the local shape for the foam is a trivial movie over a single segment and we are basically brought to the case of an isotopy of a web. We will be interested in $\frac{\partial^2 f}{\partial u \partial w}$. Assume that this vector has a non-zero $y$ coordinate $f_2$. This ensures that the framing change is isolated (the framing has non-vanishing projection through $\pi$ around $M$) and we have a trivial movie move.

Now, assume that $f_2=0$. This is a codimensions $1$ condition, so we expect through time a $1$-dimensional set of such points. Consider $\beta'=(\frac{\partial^3f}{\partial u^2\partial w})_M$. If $\beta'\neq 0$, then we locally are in a situation similar to the one from equation~\eqref{eq:FrChMorse}. If the $y$-coordinate $\nu$ of $\frac{\partial^2 f}{\partial v \partial w}$ is non-zero, then at $t=t_0$ we see the foam from equations~\ref{eq:FrChMorse2} or ~\ref{eq:FrChMorse3}. Along time, we see no change provided $\frac{\partial^2f}{\partial t\partial w}$ has non-zero $y$ coordinates. Otherwise, one gets the following movie move, which is only a twist version of the framed $\mathrm{R}_I$ move:
\[
  \begin{tikzpicture}[anchorbase]
    \node (A) at (0,0) {
\begin{tikzpicture}[anchorbase, scale=.5]
  % Strand
  \draw [semithick] (0,2) -- (4,2);
  % Framing changes
  \node at (1,2) {$\circ$};
  \node at (1,2.3) {$\epsilon$};
  \node at (3,2) {$\circ$};
  \node at (3,2.3) {$-\epsilon$};
  % Rectangle
  \draw (0,0) rectangle (4,4);
  \end{tikzpicture}};
    \node (B1) at (-2,-3) {
\begin{tikzpicture}[anchorbase, scale=.5]
  % Strand
  \draw [semithick] (0,2) -- (4,2);
  % Rectangle
  \draw (0,0) rectangle (4,4);
  \end{tikzpicture}};
    \node (B2) at (2,-3) {
\begin{tikzpicture}[anchorbase, scale=.5]
  % Strand
  \draw [semithick] (0,2) -- (4,2);
  % Framing changes
  \node at (1,2) {$\circ$};
  \node at (1,2.3) {$\epsilon$};
  \node at (3,2) {$\circ$};
  \node at (3,2.3) {$-\epsilon$};
  % Rectangle
  \draw (0,0) rectangle (4,4);
  \end{tikzpicture}};
    \node (C) at (0,-6) {
\begin{tikzpicture}[anchorbase, scale=.5]
  % Strand
  \draw [semithick] (0,2) -- (4,2);
  % Framing changes
  \node at (1,2) {$\circ$};
  \node at (1,2.3) {$\epsilon$};
  \node at (3,2) {$\circ$};
  \node at (3,2.3) {$-\epsilon$};
  % Rectangle
  \draw (0,0) rectangle (4,4);
  \end{tikzpicture}};
\draw [<->] (A) -- (B1);
\draw [<->] (A) -- (B2);
\draw [<->] (B1) -- (C);
\draw [<->] (B2) -- (C);
\end{tikzpicture}
\quad \text{or}\quad
  \begin{tikzpicture}[anchorbase]
    \node (A) at (0,0) {
\begin{tikzpicture}[anchorbase, scale=.5]
  % Strand
  \draw [semithick] (0,2) -- (4,2);
  % Framing changes
  \node at (1,2) {$\bullet$};
  \node at (1,2.3) {$\epsilon$};
  \node at (3,2) {$\bullet$};
  \node at (3,2.3) {$-\epsilon$};
  % Rectangle
  \draw (0,0) rectangle (4,4);
  \end{tikzpicture}};
    \node (B1) at (-2,-3) {
\begin{tikzpicture}[anchorbase, scale=.5]
  % Strand
  \draw [semithick] (0,2) -- (4,2);
  % Rectangle
  \draw (0,0) rectangle (4,4);
  \end{tikzpicture}};
    \node (B2) at (2,-3) {
\begin{tikzpicture}[anchorbase, scale=.5]
  % Strand
  \draw [semithick] (0,2) -- (4,2);
  % Framing changes
  \node at (1,2) {$\bullet$};
  \node at (1,2.3) {$\epsilon$};
  \node at (3,2) {$\bullet$};
  \node at (3,2.3) {$-\epsilon$};
  % Rectangle
  \draw (0,0) rectangle (4,4);
  \end{tikzpicture}};
    \node (C) at (0,-6) {
\begin{tikzpicture}[anchorbase, scale=.5]
  % Strand
  \draw [semithick] (0,2) -- (4,2);
  % Framing changes
  \node at (1,2) {$\bullet$};
  \node at (1,2.3) {$\epsilon$};
  \node at (3,2) {$\bullet$};
  \node at (3,2.3) {$-\epsilon$};
  % Rectangle
  \draw (0,0) rectangle (4,4);
  \end{tikzpicture}};
\draw [<->] (A) -- (B1);
\draw [<->] (A) -- (B2);
\draw [<->] (B1) -- (C);
\draw [<->] (B2) -- (C);
\end{tikzpicture}
\]

If $\beta'=0$, then for codimension reason this only happens at isolated points, and we can assume that the $y$-coordinate $\beta"$ in $\frac{\partial^4f}{\partial^3u \partial w}$ is non-zero as well as $\nu$. The equation for the curve of annihilation of the points is then: $\beta"\epsilon_1^3+\nu \epsilon_2=0$. As we add the time parameter, we can assume that $\kappa$ the $y$-coordinate in $\frac{\partial^2 f}{\partial w \partial t}$ is non-zero as well $\xi$ the $y$-coordinate in $\frac{\partial^2 f}{\partial u \partial t}$, and we get an equation: $\beta"\epsilon_1^3+\nu\epsilon_2+\kappa \delta+\xi \epsilon_1\delta=0$.Below we illustrate the equations: $v=u^3+u+1$ and $v=u^3-u-1$ (corresponding to $\beta"=1$, $\nu=1$, $\kappa=1$, $\xi=1$, and $\delta=\pm 1$). The point is that in all cases, we pass between $0$ and $2$ inflection points.

\begin{equation} \label{eq:pic3change1}
\begin{tikzpicture}
    \begin{axis}[
            xmin=-1,xmax=2,
            ymin=-4,ymax=4,
            grid=both,
            ]
            \addplot [domain=-3:3,samples=50]({x},{x^3+x+1}); 
    \end{axis}
\end{tikzpicture}
\end{equation}

\begin{equation} \label{eq:pic3change2}
\begin{tikzpicture}
    \begin{axis}[
            xmin=-1,xmax=2,
            ymin=-4,ymax=4,
            grid=both,
            ]
            \addplot [domain=-3:3,samples=50]({x},{x^3-x-1}); 
    \end{axis}
\end{tikzpicture}
\end{equation}

This corresponds to the following movie-move:
\[
  \begin{tikzpicture}[anchorbase]
    \node (A) at (0,0) {
\begin{tikzpicture}[anchorbase, scale=.5]
  % Strand
  \draw [semithick] (0,2) -- (4,2);
  % Framing changes
  \node at (1,2) {$\circ$};
  \node at (1,2.3) {$\epsilon$};
  % Rectangle
  \draw (0,0) rectangle (4,4);
  \end{tikzpicture}};
    \node (B1) at (-2,-3) {
\begin{tikzpicture}[anchorbase, scale=.5]
  % Strand
  \draw [semithick] (0,2) -- (4,2);
  % Framing changes
  \node at (1,2) {$\circ$};
  \node at (1,2.3) {$\epsilon$};
  \node at (2,2) {$\circ$};
  \node at (2,2.3) {$-\epsilon$};
  \node at (3,2) {$\circ$};
  \node at (3,2.3) {$\epsilon$};
  % Rectangle
  \draw (0,0) rectangle (4,4);
  \end{tikzpicture}};
    \node (C) at (0,-6) {
\begin{tikzpicture}[anchorbase, scale=.5]
  % Strand
  \draw [semithick] (0,2) -- (4,2);
  % Framing changes
  \node at (3,2) {$\circ$};
  \node at (3,2.3) {$\epsilon$};
  % Rectangle
  \draw (0,0) rectangle (4,4);
  \end{tikzpicture}};
\draw [<->] (A) -- (B1);
\draw [<->] (B1) -- (C);
\draw [<->] (A) -- (C);
\end{tikzpicture}
\quad \text{or}\quad
  \begin{tikzpicture}[anchorbase]
    \node (A) at (0,0) {
\begin{tikzpicture}[anchorbase, scale=.5]
  % Strand
  \draw [semithick] (0,2) -- (4,2);
  % Framing changes
  \node at (1,2) {$\bullet$};
  \node at (1,2.3) {$\epsilon$};
  % Rectangle
  \draw (0,0) rectangle (4,4);
  \end{tikzpicture}};
    \node (B1) at (-2,-3) {
\begin{tikzpicture}[anchorbase, scale=.5]
  % Strand
  \draw [semithick] (0,2) -- (4,2);
  % Framing changes
  \node at (1,2) {$\bullet$};
  \node at (1,2.3) {$\epsilon$};
  \node at (2,2) {$\bullet$};
  \node at (2,2.3) {$-\epsilon$};
  \node at (3,2) {$\bullet$};
  \node at (3,2.3) {$\epsilon$};
  % Rectangle
  \draw (0,0) rectangle (4,4);
  \end{tikzpicture}};
    \node (C) at (0,-6) {
\begin{tikzpicture}[anchorbase, scale=.5]
  % Strand
  \draw [semithick] (0,2) -- (4,2);
  % Framing changes
  \node at (3,2) {$\bullet$};
  \node at (3,2.3) {$\epsilon$};
  % Rectangle
  \draw (0,0) rectangle (4,4);
  \end{tikzpicture}};
\draw [<->] (A) -- (B1);
\draw [<->] (B1) -- (C);
\draw [<->] (A) -- (C);
\end{tikzpicture}
\]

\label{page:b4=0} We now go back to the case where $b_4=0$, that is:
\[
  \left(df\right)_M=\begin{pmatrix}
    a_1 & 0 & \lambda a_1  \\
    0 & b_2 & \mu b_2  \\
    a_3 & b_3 &  \lambda a_3+\mu b_3 + \eta \\
    0 & 0 & 0 
  \end{pmatrix}\quad ,\quad a_1>0, \eta \neq 0
\]
This is a co-dimension $1$ condition, so we expect such points to happen at isolated places (through time). Up to adding multiples of $u$ and $v$ to $w$, we can simplify the matrix to:
\[
  \left(df\right)_M=\begin{pmatrix}
    a_1 & 0 & 0  \\
    0 & b_2 & 0  \\
    a_3 & b_3 & \eta \\
    0 & 0 & 0 
  \end{pmatrix}\quad ,\quad a_1>0, \eta \neq 0
\]
One may assume that the 3 by 3 matrix formed by the projections under $\pi$ of $\left(\frac{\partial^2 f}{\partial^2 u}\right)_M$, $\left(\frac{\partial^2 f}{\partial u \partial v}\right)_M$ and $\left(\frac{\partial^2 f}{\partial^2 v}\right)_M$ is of rank $3$. This ensures that the system formed by the first-order expansions of the equation:
\[
  A\left(\frac{\partial f}{\partial u}\right)_{(u_0+\epsilon_1,v_0+\epsilon_2,w_0,t_0)}+  B\left(\frac{\partial f}{\partial v}\right)_{(u_0+\epsilon_1,v_0+\epsilon_2,w_0,t_0)}=  \left(\frac{\partial f}{\partial w}\right)_{(u_0+\epsilon_1,v_0+\epsilon_2,w_0,t_0)}
\]
has a unique solution in $A$, $B$ for each value of $\epsilon_1$ and $\epsilon_2$. Furthermore, we can assume that the RHS of the above equation is non-zero, which implies that the solution is non-zero linear combination in $\epsilon_1$ and $\epsilon_2$. In other terms, at first order, the locus of framing change draws a line that passes at $M$.

The same equation will transport through time, except that the line will drift in the direction given by $- \frac{\partial^2 f}{\partial w \partial t}$. This vector can be assumed to have non-zero projection in the $x,y$ plane. One thus gets one of the following moves.

\begin{equation}\label{eq:twistpaircup}
  \begin{tikzpicture}[anchorbase]
    \node (A) at (0,0) {
\begin{tikzpicture}[anchorbase, scale=.5]
  % Circle
  \draw [semithick] (2,2) circle (1);
  % Framing changes
  \node at (1,2) {$\circ$};
  \node at (.6,2) {$\epsilon$};
  \node at (3,2) {$\circ$};
  \node at (3.4,2) {$-\epsilon$};
  % Rectangle
  \draw (0,0) rectangle (4,4);
  \end{tikzpicture}};
    \node (B1) at (-2,-3) {
\begin{tikzpicture}[anchorbase, scale=.5]
  % Circle
  \draw [semithick] (2,2) circle (1);
  % Framing changes
  \node at (1.3,2.7) {$\circ$};
  \node at (1,2.9) {$\epsilon$};
  \node at (2.7,2.7) {$\circ$};
  \node at (3.2,2.9) {$-\epsilon$};
  % Rectangle
  \draw (0,0) rectangle (4,4);
  \end{tikzpicture}};
    \node (B2) at (2,-3) {
\begin{tikzpicture}[anchorbase, scale=.5]
  % Circle
  \draw [semithick] (2,2) circle (1);
  % Framing changes
  \node at (1.3,1.3) {$\circ$};
  \node at (1,1.1) {$\epsilon$};
  \node at (2.7,1.3) {$\circ$};
  \node at (3.2,1.1) {$-\epsilon$};
  % Rectangle
  \draw (0,0) rectangle (4,4);
  \end{tikzpicture}};
    \node (C) at (0,-6) {
\begin{tikzpicture}[anchorbase, scale=.5]
  % Circle
  \draw [semithick] (2,2) circle (1);
  % Rectangle
  \draw (0,0) rectangle (4,4);
  \end{tikzpicture}};
    \node (D) at (0,-9) {
\begin{tikzpicture}[anchorbase, scale=.5]
  % Rectangle
  \draw (0,0) rectangle (4,4);
  \end{tikzpicture}};
\draw [<->] (A) -- (B1);
\draw [<->] (A) -- (B2);
\draw [<->] (B1) -- (C);
\draw [<->] (B2) -- (C);
\draw [<->] (C) -- (D);
\end{tikzpicture}
\quad\text{or}\quad
  \begin{tikzpicture}[anchorbase]
    \node (A) at (0,0) {
\begin{tikzpicture}[anchorbase, scale=.5]
  % Circle
  \draw [semithick] (2,2) circle (1);
  % Framing changes
  \node at (1,2) {$\bullet$};
  \node at (.6,2) {$\epsilon$};
  \node at (3,2) {$\bullet$};
  \node at (3.4,2) {$-\epsilon$};
  % Rectangle
  \draw (0,0) rectangle (4,4);
  \end{tikzpicture}};
    \node (B1) at (-2,-3) {
\begin{tikzpicture}[anchorbase, scale=.5]
  % Circle
  \draw [semithick] (2,2) circle (1);
  % Framing changes
  \node at (1.3,2.7) {$\bullet$};
  \node at (1,2.9) {$\epsilon$};
  \node at (2.7,2.7) {$\bullet$};
  \node at (3.2,2.9) {$-\epsilon$};
  % Rectangle
  \draw (0,0) rectangle (4,4);
  \end{tikzpicture}};
    \node (B2) at (2,-3) {
\begin{tikzpicture}[anchorbase, scale=.5]
  % Circle
  \draw [semithick] (2,2) circle (1);
  % Framing changes
  \node at (1.3,1.3) {$\bullet$};
  \node at (1,1.1) {$\epsilon$};
  \node at (2.7,1.3) {$\bullet$};
  \node at (3.2,1.1) {$-\epsilon$};
  % Rectangle
  \draw (0,0) rectangle (4,4);
  \end{tikzpicture}};
    \node (C) at (0,-6) {
\begin{tikzpicture}[anchorbase, scale=.5]
  % Circle
  \draw [semithick] (2,2) circle (1);
  % Rectangle
  \draw (0,0) rectangle (4,4);
  \end{tikzpicture}};
    \node (D) at (0,-9) {
\begin{tikzpicture}[anchorbase, scale=.5]
  % Rectangle
  \draw (0,0) rectangle (4,4);
  \end{tikzpicture}};
\draw [<->] (A) -- (B1);
\draw [<->] (A) -- (B2);
\draw [<->] (B1) -- (C);
\draw [<->] (B2) -- (C);
\draw [<->] (C) -- (D);
\end{tikzpicture}
\end{equation}

\[
  \begin{tikzpicture}[anchorbase]
    \node (A) at (0,0) {
\begin{tikzpicture}[anchorbase, scale=.5]
  % Strands
  \draw [semithick] (0,0) to [out=45,in=135] (4,0);
  \draw [semithick] (0,4) to [out=-45,in=-135] (4,4);
  % Framing changes
  \node at (2,.85) {$\circ$};
  \node at (2,.4) {$\epsilon$};
  \node at (2,3.15) {$\circ$};
  \node at (2,3.6) {$-\epsilon$};
  % Rectangle
  \draw (0,0) rectangle (4,4);
  \end{tikzpicture}};
    \node (B1) at (-2,-3) {
\begin{tikzpicture}[anchorbase, scale=.5]
  % Strands
  \draw [semithick] (0,0) to [out=45,in=-45] (0,4);
  \draw [semithick] (4,0) to [out=135,in=-135] (4,4);
  % Framing changes
  \node at (.65,1) {$\circ$};
  \node at (1.1,1) {$\epsilon$};
  \node at (.65,3) {$\circ$};
  \node at (1.1,3) {$-\epsilon$};
  % Rectangle
  \draw (0,0) rectangle (4,4);
  \end{tikzpicture}};
    \node (B2) at (2,-3) {
\begin{tikzpicture}[anchorbase, scale=.5]
  % Strands
  \draw [semithick] (0,0) to [out=45,in=-45] (0,4);
  \draw [semithick] (4,0) to [out=135,in=-135] (4,4);
  % Framing changes
  \node at (3.35,1) {$\circ$};
  \node at (2.9,1) {$\epsilon$};
  \node at (3.35,3) {$\circ$};
  \node at (2.7,3) {$-\epsilon$};
  % Rectangle
  \draw (0,0) rectangle (4,4);
\end{tikzpicture}};
    \node (C) at (0,-6) {
\begin{tikzpicture}[anchorbase, scale=.5]
  % Strands
  \draw [semithick] (0,0) to [out=45,in=-45] (0,4);
  \draw [semithick] (4,0) to [out=135,in=-135] (4,4);
  % Framing changes
  % Rectangle
  \draw (0,0) rectangle (4,4);
  \end{tikzpicture}};
\draw [<->] (A) -- (B1);
\draw [<->] (A) -- (B2);
\draw [<->] (B1) -- (C);
\draw [<->] (B2) -- (C);
\end{tikzpicture}
\quad\text{or}\quad
  \begin{tikzpicture}[anchorbase]
    \node (A) at (0,0) {
\begin{tikzpicture}[anchorbase, scale=.5]
  % Strands
  \draw [semithick] (0,0) to [out=45,in=135] (4,0);
  \draw [semithick] (0,4) to [out=-45,in=-135] (4,4);
  % Framing changes
  \node at (2,.85) {$\bullet$};
  \node at (2,.4) {$\epsilon$};
  \node at (2,3.15) {$\bullet$};
  \node at (2,3.6) {$-\epsilon$};
  % Rectangle
  \draw (0,0) rectangle (4,4);
  \end{tikzpicture}};
    \node (B1) at (-2,-3) {
\begin{tikzpicture}[anchorbase, scale=.5]
  % Strands
  \draw [semithick] (0,0) to [out=45,in=-45] (0,4);
  \draw [semithick] (4,0) to [out=135,in=-135] (4,4);
  % Framing changes
  \node at (.65,1) {$\bullet$};
  \node at (1.1,1) {$\epsilon$};
  \node at (.65,3) {$\bullet$};
  \node at (1.1,3) {$-\epsilon$};
  % Rectangle
  \draw (0,0) rectangle (4,4);
  \end{tikzpicture}};
    \node (B2) at (2,-3) {
\begin{tikzpicture}[anchorbase, scale=.5]
  % Strands
  \draw [semithick] (0,0) to [out=45,in=-45] (0,4);
  \draw [semithick] (4,0) to [out=135,in=-135] (4,4);
  % Framing changes
  \node at (3.35,1) {$\bullet$};
  \node at (2.9,1) {$\epsilon$};
  \node at (3.35,3) {$\bullet$};
\node at (2.7,3) {$-\epsilon$};
  % Rectangle
  \draw (0,0) rectangle (4,4);
\end{tikzpicture}};
    \node (C) at (0,-6) {
\begin{tikzpicture}[anchorbase, scale=.5]
  % Strands
  \draw [semithick] (0,0) to [out=45,in=-45] (0,4);
  \draw [semithick] (4,0) to [out=135,in=-135] (4,4);
  % Framing changes
  % Rectangle
  \draw (0,0) rectangle (4,4);
  \end{tikzpicture}};
\draw [<->] (A) -- (B1);
\draw [<->] (A) -- (B2);
\draw [<->] (B1) -- (C);
\draw [<->] (B2) -- (C);
\end{tikzpicture}
\]

We now look at points where $\vec{z}\in \Span_{\R}\left(\frac{\partial f}{\partial u},\frac{\partial f}{\partial v}\right)$. If we recap on codimension, we have: a codimension $1$ condition when restricting to $F$, and a codimension $2$ condition coming from the verticality assumption. We thus have isolated points on a foam (this was already analyzed when looking at isotopies of a web) yielding the movies from Equation~\eqref{eq:frR1}, and $1$-dimensional sets through time.
% Two things could happen (and they are generically mutually exclusive):
% \begin{itemize}
% \item this happens at a Morse extremum ($a_4=b_4=0$);
% \item a framing change occurs at the same time.
% \end{itemize}

It can happen that $a_4=b_4=0$: this is the case of Morse extrema which has already been analyzed, yielding movie moves $\text{MM}_{12}$ and $\text{MM}_{13}$.

Notice that there cannot be a framing change happening at the same time. Indeed:
\[
  \Span_\R\left(\frac{\partial f}{\partial u},\frac{\partial f}{\partial v},\vec{z}\right)=  \Span_\R\left(\frac{\partial f}{\partial u},\frac{\partial f}{\partial v}\right)
\]
and $\frac{\partial f}{\partial w}$ cannot belong to the above vector space without violating condition~\ref{enum:condFoam2} on page~\pageref{enum:condFoam}.

Let us run a process similar to the one that yielded Equation~\eqref{eq:frR1}. Up to change of $(u,v)$ basis, one can reduce to:
\[
  \frac{\partial f}{\partial u}=\begin{pmatrix} 0 \\ 0 \\ a_3 \\ 0\end{pmatrix},   \quad \frac{\partial f}{\partial v}=\begin{pmatrix} b_1 \\ b_2 \\ 0 \\ b_4 \end{pmatrix}
\]

Up to rotation in the $x,y$ plane, one further reduces to:
\[
\frac{\partial^2 f}{\partial u^2}=\begin{pmatrix} d_1 \\ 0 \\ d_3 \\ d_4 \end{pmatrix},\quad \frac{\partial^2 f}{\partial u\partial v}=\begin{pmatrix} e_1 \\ e_2 \\ a_3 \\ e_4\end{pmatrix},\quad \frac{\partial f^3}{\partial u^3}=\begin{pmatrix} l_1 \\ l_2 \\ l_3 \\ l_4\end{pmatrix}
\]
If $d_1\neq 0$, $l_2\neq 0$ and $e_2\neq 0$, 
then we simply have a framed Reidemeister 1 move traveling through time. This will be a trivial movie move provided the line of double points has a tangent at origin with non-zero $s$-coordinate. If this is not the case, then one gets the following move (which is just the classical $\text{MM}_7$ if one reads it from bottom to top and back to bottom through the other side).

\[
  \begin{tikzpicture}[anchorbase]
    \node (A) at (0,0) {
\begin{tikzpicture}[anchorbase, scale=.5]
  % Strands
  \draw [semithick] (1,2) to [out=90,in=135] (4,0);
  \draw [white, line width=3] (4,4) to [out=-135,in=-90] (1,2);
  \draw [semithick] (4,4) to [out=-135,in=-90] (1,2);
  % Framing changes
  \node at (1.3,1.6) {$\circ$};
  \node at (1.3,1.2) {\tiny $-1$};
  % Rectangle
  \draw (0,0) rectangle (4,4);
  \end{tikzpicture}};
    \node (B1) at (-2,-3) {
\begin{tikzpicture}[anchorbase, scale=.5]
  % Strands
  \draw [semithick] (4,0) to [out=135,in=0] (2,2.5) to [out=180,in=0] (1,1.5) to [out=180,in=-90] (.5,2);
  \draw [white, line width=3] (4,4) to [out=-135,in=0] (2,1.5) to [out=180,in=0] (1,2.5) to [out=180,in=90] (.5,2);
  \draw [semithick] (4,4) to [out=-135,in=0] (2,1.5) to [out=180,in=0] (1,2.5) to [out=180,in=90] (.5,2);
  % Framing changes
  \node at (1,2.5) {$\bullet$};
  \node at (1,2.9) {\tiny $+$};
  % Rectangle
  \draw (0,0) rectangle (4,4);
  \end{tikzpicture}};
    \node (C) at (0,-6) {
\begin{tikzpicture}[anchorbase, scale=.5]
  % Strands
  \draw [semithick] (4,0) to [out=135,in=-90] (2.5,2) to [out=90,in=-135] (4,4);
  % Framing changes
  \node at (2.5,2) {$\bullet$};
  \node at (2.1,2) {\tiny $+$};
  % Rectangle
  \draw (0,0) rectangle (4,4);
  \end{tikzpicture}};
\draw [<->] (A) -- (B1);
\draw [<->] (B1) -- (C);
\draw [<->] (A) to [out=-60,in=60] (C);
\node at (0,-3) {$\text{MM}_{7}$};
\end{tikzpicture}
\quad \text{or} \quad
  \begin{tikzpicture}[anchorbase]
    \node (A) at (0,0) {
\begin{tikzpicture}[anchorbase, scale=.5]
  % Strands
  \draw [semithick] (1,2) to [out=90,in=135] (4,0);
  \draw [white, line width=3] (4,4) to [out=-135,in=-90] (1,2);
  \draw [semithick] (4,4) to [out=-135,in=-90] (1,2);
  % Framing changes
  \node at (1.3,1.6) {$\bullet$};
  \node at (1.3,1.2) {\tiny $-1$};
   % Rectangle
  \draw (0,0) rectangle (4,4);
  \end{tikzpicture}};
    \node (B1) at (-2,-3) {
\begin{tikzpicture}[anchorbase, scale=.5]
  % Strands
  \draw [semithick] (4,0) to [out=135,in=0] (2,2.5) to [out=180,in=0] (1,1.5) to [out=180,in=-90] (.5,2);
  \draw [white, line width=3] (4,4) to [out=-135,in=0] (2,1.5) to [out=180,in=0] (1,2.5) to [out=180,in=90] (.5,2);
  \draw [semithick] (4,4) to [out=-135,in=0] (2,1.5) to [out=180,in=0] (1,2.5) to [out=180,in=90] (.5,2);
  % Framing changes
  \node at (1,2.5) {$\circ$};
  \node at (1,2.9) {\tiny $+$};
  % Rectangle
  \draw (0,0) rectangle (4,4);
  \end{tikzpicture}};
    \node (C) at (0,-6) {
\begin{tikzpicture}[anchorbase, scale=.5]
  % Strands
  \draw [semithick] (4,0) to [out=135,in=-90] (2.5,2) to [out=90,in=-135] (4,4);
  % Framing changes
  \node at (2.5,2) {$\circ$};
  \node at (2.1,2) {\tiny $+$};
  % Rectangle
  \draw (0,0) rectangle (4,4);
  \end{tikzpicture}};
\draw [<->] (A) -- (B1);
\draw [<->] (B1) -- (C);
\draw [<->] (A) to [out=-60,in=60] (C);
\node at (0,-3) {$\text{MM}_{7}$};
\end{tikzpicture}
\]

\[
  \begin{tikzpicture}[anchorbase]
    \node (A) at (0,0) {
\begin{tikzpicture}[anchorbase, scale=.5]
  % Strands
  \draw [white, line width=3] (4,4) to [out=-135,in=-90] (1,2);
  \draw [semithick] (4,4) to [out=-135,in=-90] (1,2);
  \draw [white, line width=3] (1,2) to [out=90,in=135] (4,0);
  \draw [semithick] (1,2) to [out=90,in=135] (4,0);
  % Framing changes
  \node at (1.3,2.4) {$\bullet$};
  \node at (1.3,2.8) {\tiny $+$};
  % Rectangle
  \draw (0,0) rectangle (4,4);
  \end{tikzpicture}};
    \node (B1) at (-2,-3) {
\begin{tikzpicture}[anchorbase, scale=.5]
  % Strands
  \draw [white, line width=3] (4,4) to [out=-135,in=0] (2,1.5) to [out=180,in=0] (1,2.5) to [out=180,in=90] (.5,2);
  \draw [semithick] (4,4) to [out=-135,in=0] (2,1.5) to [out=180,in=0] (1,2.5) to [out=180,in=90] (.5,2);
  \draw [white, line width=3] (4,0) to [out=135,in=0] (2,2.5) to [out=180,in=0] (1,1.5) to [out=180,in=-90] (.5,2);
  \draw [semithick] (4,0) to [out=135,in=0] (2,2.5) to [out=180,in=0] (1,1.5) to [out=180,in=-90] (.5,2);
  % Framing changes
  \node at (1,1.5) {$\circ$};
  \node at (1,1.1) {\tiny $-$};
  % Rectangle
  \draw (0,0) rectangle (4,4);
  \end{tikzpicture}};
    \node (C) at (0,-6) {
\begin{tikzpicture}[anchorbase, scale=.5]
  % Strands
  \draw [semithick] (4,0) to [out=135,in=-90] (2.5,2) to [out=90,in=-135] (4,4);
  % Framing changes
  \node at (2.5,2) {$\circ$};
  \node at (2.1,2) {\tiny $-$};
  % Rectangle
  \draw (0,0) rectangle (4,4);
  \end{tikzpicture}};
\draw [<->] (A) -- (B1);
\draw [<->] (B1) -- (C);
\draw [<->] (A) to [out=-60,in=60] (C);
\node at (0,-3) {$\text{MM}_{7}$};
\end{tikzpicture}
\quad \text{or} \quad
  \begin{tikzpicture}[anchorbase]
    \node (A) at (0,0) {
\begin{tikzpicture}[anchorbase, scale=.5]
  % Strands
  \draw [white, line width=3] (4,4) to [out=-135,in=-90] (1,2);
  \draw [semithick] (4,4) to [out=-135,in=-90] (1,2);
  \draw [white, line width=3] (1,2) to [out=90,in=135] (4,0);
  \draw [semithick] (1,2) to [out=90,in=135] (4,0);
  % Framing changes
  \node at (1.3,2.4) {$\circ$};
  \node at (1.3,2.8) {\tiny $+$};
  % Rectangle
  \draw (0,0) rectangle (4,4);
  \end{tikzpicture}};
    \node (B1) at (-2,-3) {
\begin{tikzpicture}[anchorbase, scale=.5]
  % Strands
  \draw [white, line width=3] (4,4) to [out=-135,in=0] (2,1.5) to [out=180,in=0] (1,2.5) to [out=180,in=90] (.5,2);
  \draw [semithick] (4,4) to [out=-135,in=0] (2,1.5) to [out=180,in=0] (1,2.5) to [out=180,in=90] (.5,2);
  \draw [white, line width=3] (4,0) to [out=135,in=0] (2,2.5) to [out=180,in=0] (1,1.5) to [out=180,in=-90] (.5,2);
  \draw [semithick] (4,0) to [out=135,in=0] (2,2.5) to [out=180,in=0] (1,1.5) to [out=180,in=-90] (.5,2);
  % Framing changes
  \node at (1,1.5) {$\bullet$};
  \node at (1,1.1) {\tiny $-$};
  % Rectangle
  \draw (0,0) rectangle (4,4);
  \end{tikzpicture}};
    \node (C) at (0,-6) {
\begin{tikzpicture}[anchorbase, scale=.5]
  % Strands
  \draw [semithick] (4,0) to [out=135,in=-90] (2.5,2) to [out=90,in=-135] (4,4);
  % Framing changes
  \node at (2.5,2) {$\bullet$};
  \node at (2.1,2) {\tiny $-$};
  % Rectangle
  \draw (0,0) rectangle (4,4);
  \end{tikzpicture}};
\draw [<->] (A) -- (B1);
\draw [<->] (B1) -- (C);
\draw [<->] (A) to [out=-60,in=60] (C);
\node at (0,-3) {$\text{MM}_{7}$};
\end{tikzpicture}
\]

To illustrate this case, consider the following particular situation:
\[
  f_{t_0}(u_0+\epsilon_1,v_0+\epsilon_2)=
  \begin{pmatrix} b_1\epsilon_2 +d_1\epsilon_1^2 \\ b_2\epsilon_2+e_2\epsilon_1\epsilon_2 +l_2\epsilon_1^3 \\ b_4 \epsilon_2+d_4\epsilon_1^2+e_4\epsilon_1\epsilon_2
  \end{pmatrix} 
\]

Then the line of double points is governed by the equation $\epsilon_1^2=-\frac{e_2}{l_2}\epsilon_2$. The tangent vector at $\epsilon_2=0$ is:
\[
  \begin{pmatrix}b_1 +\frac{d_1e_2}{l_2}\\ b_2 \\ b_4+\frac{d_4e_2}{l_2}\end{pmatrix}
\]
It might indeed happen that the last line vanishes.

Asking that $d_1=0$ corresponds to the simultaneous vanishing of both $x$ and $y$ coordinates in $\frac{\partial^2 f}{\partial u^2}$, so this is of too big codimension and generically does not happen.

Assume that $e_2=0$. Then we will use $m_2$ the $y$-coordinate of $\frac{\partial f^3}{\partial u^2 \partial v}$ and $n_2$ the $y$-coordinate of $\frac{\partial f^3}{\partial u \partial v^2}$ to write, at fixed $\epsilon_2$ and $\epsilon_1$ small enough:
\[
  f_{t_0}(u_0+\epsilon_1,v_0+\epsilon_2,w_0)\sim \begin{pmatrix}b_1\epsilon_2+d_1\epsilon_1^2+e_1\epsilon_1\epsilon_2 \\ b_2\epsilon_2+l_2\epsilon_1^3+m_2\epsilon_1^2\epsilon_2+n_2\epsilon_1\epsilon_2^2 \\ \cdot \\ b_4\epsilon_2+d_4\epsilon_1^2+e_4\epsilon_1\epsilon_2 \end{pmatrix}
\]

Starting from the cusp seen at $\epsilon_2=0$, one can study the effect of $e_1\epsilon_1\epsilon_2$, $m_2\epsilon_1^2\epsilon_2$ and $n_2\epsilon_1\epsilon_2^2$. One finds that only the latter term plays a role, and this time this role is the same both for $\epsilon_2$ positive or negative.  So depending on the signs of $l_2$ and $n_2$, either a crossing gets created both for $\epsilon_2>0$ and $\epsilon_2<0$, or for none of them.

The first case yields the classical movie move $\text{MM}_1$, while the second one yields $\text{MM}_2$.

\[
  \begin{tikzpicture}[anchorbase]
    \node (A) at (0,0) {
\begin{tikzpicture}[anchorbase, scale=.5]
  % Strands
  \draw [semithick] (2,0) -- (2,4);
  % Twists
  \node at (2,2) {$\bullet$};
  \node at (1.6,2) {\tiny $+$};
  % Rectangle
    \draw (0,0) rectangle (4,4);
  \end{tikzpicture}};
    \node (B1) at (-2,-3) {
\begin{tikzpicture}[anchorbase, scale=.5]
  % Strands
  \draw [white, line width=3] (2,0) -- (2,2) to [out=90,in=90] (1,2);
  \draw [semithick] (2,0) -- (2,2) to [out=90,in=90] (1,2);
  \draw [white, line width=3] (1,2) to [out=-90,in=-90] (2,2) -- (2,4);
  \draw [semithick] (1,2) to [out=-90,in=-90] (2,2) -- (2,4);
  % Halft twists
  \node at (1.5,1.7) {$\circ$};
  \node at (1.5,1.3) {\tiny $-$};
  % Rectangle
  \draw (0,0) rectangle (4,4);
  \end{tikzpicture}};
    \node (C) at (0,-6) {
\begin{tikzpicture}[anchorbase, scale=.5]
  % Strands
    \draw [semithick] (2,0) -- (2,4);
  % Twists
  \node at (2,2) {$\bullet$};
  \node at (1.6,2) {\tiny $+$};
  % Rectangle
  \draw (0,0) rectangle (4,4);
  \end{tikzpicture}};
\draw [<->] (A) -- (B1);
\draw [<->] (B1) -- (C);
\draw [<->] (A) to [out=-60,in=60] (C);
\node at (0,-3) {$\text{MM}_{1}$};
\end{tikzpicture}
\quad \text{or} \quad
  \begin{tikzpicture}[anchorbase]
    \node (A) at (0,0) {
\begin{tikzpicture}[anchorbase, scale=.5]
  % Strands
    \draw [semithick] (2,0) -- (2,4);
  % Twists
  \node at (2,2) {$\circ$};
  \node at (1.6,2) {\tiny $-$};
  % Rectangle
  \draw (0,0) rectangle (4,4);
  \end{tikzpicture}};
    \node (B1) at (-2,-3) {
\begin{tikzpicture}[anchorbase, scale=.5]
  % Strands
  \draw [white, line width=3] (1,2) to [out=-90,in=-90] (2,2) -- (2,4);
  \draw [semithick] (1,2) to [out=-90,in=-90] (2,2) -- (2,4);
  \draw [white, line width=3] (2,0) -- (2,2) to [out=90,in=90] (1,2);
  \draw [semithick] (2,0) -- (2,2) to [out=90,in=90] (1,2);
  % Halft twists
  \node at (1.5,2.3) {$\bullet$};
  \node at (1.5,2.7) {\tiny $+$};
  % Rectangle
  \draw (0,0) rectangle (4,4);
  \end{tikzpicture}};
    \node (C) at (0,-6) {
\begin{tikzpicture}[anchorbase, scale=.5]
  % Strands
    \draw [semithick] (2,0) -- (2,4);
  % Twists
  \node at (2,2) {$\circ$};
  \node at (1.6,2) {\tiny $-$};
  % Rectangle
  \draw (0,0) rectangle (4,4);
  \end{tikzpicture}};
\draw [<->] (A) -- (B1);
\draw [<->] (B1) -- (C);
\draw [<->] (A) to [out=-60,in=60] (C);
\node at (0,-3) {$\text{MM}_{1}$};
\end{tikzpicture}
\]

Above and below one should also consider the case where the $\circ$ and $\bullet$ signs are exchanged.

\[
  \begin{tikzpicture}[anchorbase]
    \node (A) at (0,0) {
\begin{tikzpicture}[anchorbase, scale=.5]
  % Strands
  \draw [white, line width=3] (2,0) -- (2,2) to [out=90,in=90] (1,2);
  \draw [semithick] (2,0) -- (2,2) to [out=90,in=90] (1,2);
  \draw [white, line width=3] (1,2) to [out=-90,in=-90] (2,2) -- (2,4);
  \draw [semithick] (1,2) to [out=-90,in=-90] (2,2) -- (2,4);
  % Halft twists
  \node at (1.5,1.7) {$\circ$};
  \node at (1.5,1.3) {$-$};
  % Rectangle
  \draw (0,0) rectangle (4,4);
  \end{tikzpicture}};
    \node (B1) at (-2,-3) {
\begin{tikzpicture}[anchorbase, scale=.5]
  % Strands
    \draw [semithick] (2,0) -- (2,4);
    % Twists
    \node at (2,2) {$\bullet$};
    \node at (1.6,2) {\tiny $+$};
    % Rectangle
  \draw (0,0) rectangle (4,4);
  \end{tikzpicture}};
    \node (C) at (0,-6) {
\begin{tikzpicture}[anchorbase, scale=.5]
  % Strands
  \draw [white, line width=3] (2,0) -- (2,2) to [out=90,in=90] (1,2);
  \draw [semithick] (2,0) -- (2,2) to [out=90,in=90] (1,2);
  \draw [white, line width=3] (1,2) to [out=-90,in=-90] (2,2) -- (2,4);
  \draw [semithick] (1,2) to [out=-90,in=-90] (2,2) -- (2,4);
  % Halft twists
  \node at (1.5,1.7) {$\circ$};
  \node at (1.5,1.3) {$-$};
  % Rectangle
  \draw (0,0) rectangle (4,4);
  \end{tikzpicture}};
\draw [<->] (A) -- (B1);
\draw [<->] (B1) -- (C);
\draw [<->] (A) to [out=-60,in=60] (C);
\node at (0,-3) {$\text{MM}_{2}$};
\end{tikzpicture}
\quad \text{or} \quad
  \begin{tikzpicture}[anchorbase]
    \node (A) at (0,0) {
\begin{tikzpicture}[anchorbase, scale=.5]
  % Strands
  \draw [white, line width=3] (1,2) to [out=-90,in=-90] (2,2) -- (2,4);
  \draw [semithick] (1,2) to [out=-90,in=-90] (2,2) -- (2,4);
  \draw [white, line width=3] (2,0) -- (2,2) to [out=90,in=90] (1,2);
  \draw [semithick] (2,0) -- (2,2) to [out=90,in=90] (1,2);
  % Halft twists
  \node at (1.5,2.3) {$\bullet$};
  \node at (1.5,2.7) {$+$};
  % Rectangle
  \draw (0,0) rectangle (4,4);
  \end{tikzpicture}};
    \node (B1) at (-2,-3) {
\begin{tikzpicture}[anchorbase, scale=.5]
  % Strands
    \draw [semithick] (2,0) -- (2,4);
    % Twists
    \node at (2,2) {$\circ$};
    \node at (1.6,2) {\tiny $-$};
  % Rectangle
  \draw (0,0) rectangle (4,4);
  \end{tikzpicture}};
    \node (C) at (0,-6) {
\begin{tikzpicture}[anchorbase, scale=.5]
  % Strands
  \draw [white, line width=3] (1,2) to [out=-90,in=-90] (2,2) -- (2,4);
  \draw [semithick] (1,2) to [out=-90,in=-90] (2,2) -- (2,4);
  \draw [white, line width=3] (2,0) -- (2,2) to [out=90,in=90] (1,2);
  \draw [semithick] (2,0) -- (2,2) to [out=90,in=90] (1,2);
  % Halft twists
  \node at (1.5,2.3) {$\bullet$};
  \node at (1.5,2.7) {$+$};
  % Rectangle
  \draw (0,0) rectangle (4,4);
  \end{tikzpicture}};
\draw [<->] (A) -- (B1);
\draw [<->] (B1) -- (C);
\draw [<->] (A) to [out=-60,in=60] (C);
\node at (0,-3) {$\text{MM}_{2}$};
\end{tikzpicture}
\]

The case where $l_2=0$ goes as follows: one replaces $l_2\epsilon_1^3$ by $p_2\epsilon_1^4$. One can push the Taylor expansion by adding a term $q_2\epsilon_1^5$, but then one has to account for terms $r_2\epsilon_1^3\epsilon_2$ too (we only keep track of those with odd exponent in $\epsilon_1$). We find multiple points by looking at the roots of the polynomial $q_2\epsilon_1^5+r_2\epsilon_1^3\epsilon_2 + e_2\epsilon_1\epsilon_2=0$. $\epsilon_1=0$ gives one root, and then depending on the values of the coefficients, one can get $0$, $2$ or $4$ real roots. Finding these roots goes by considering the degree two polynomial in the variable $X^2$ given by $q_2X^4+(r_2\epsilon_2)X^2+(e_2\epsilon_2)$. Changing $\epsilon_2$ to $-\epsilon_2$ does change the sign of the determinant. So on one of the two sides in the $v$ direction, one sees no multiple points. On the other side, the determinant is equivalent to $-4q_2e_2\epsilon_2$, and one finds that one of the two roots of the degree two polynomial has to be negative. This only creates two real roots, yielding a single double point, as above.

%%%%%%%%%%%%%%%%%%%%%%%%%%%%%%%%%%%%%%%%%%%%%%%%
\subsection{Neighborhood of a point on $F^{1}$}
%%%%%%%%%%%%%%%%%%%%%%%%%%%%%%%%%%%%%%%%%%%%%%%

Recall from the beginning of the section that we have to consider the following degeneracies:
\begin{itemize} \label{itemize:F1}
\item $\frac{\partial f}{\partial v} = \begin{pmatrix} \cdot \\ \cdot \\ \cdot \\ 0\end{pmatrix}$;
\item $(\frac{\partial f}{\partial u},\frac{\partial f}{\partial v})$ contains the direction $\vec{z}$;
\item $\frac{\partial f}{\partial w}$ is contained in the space spanned by $\vec{z},\frac{\partial f}{\partial u},\frac{\partial f}{\partial v}$.
\end{itemize}

Recall that the graph of $f$ is 3-dimensional ($+1$ through time). Regarding codimension, restricting to a point on $F^1$ is a codimension 2 condition. Asking that  $\frac{\partial f}{\partial v} = \begin{pmatrix} \cdot \\ \cdot \\ \cdot \\ 0\end{pmatrix}$ is a codimension $1$ condition, so this happens at isolated points with no other degeneracy in a foam, and such points form 1-dimensional sets through time.

To obtain a local foam model, consider a point $M=(u_0,v_0,w_0)\in F^{1}$, and assume that $\frac{\partial f}{\partial v} = \begin{pmatrix} \cdot \\ \cdot \\ \cdot \\ 0\end{pmatrix}$. Recall we use the local chart that makes the $v$ direction parallel to the seam. Up to rotation in the $x,y$ plane, one may assume that:
\[
  \frac{\partial f}{\partial v} = \begin{pmatrix} b_1 \\ 0 \\ b_3 \\ 0\end{pmatrix}, b_1>0.
\]
The assumption that $b_1>0$ is generic, even through time, as its failure expresses the codimension $2$ condition that both $x$ and $y$ are zero. Consider:
\[
 \frac{\partial^2 f}{\partial v^2} = \begin{pmatrix} g_1 \\ g_2 \\ g_3 \\ g_4\end{pmatrix}
\]
One may assume that $g_4\neq 0$. This is a codimension $1$ condition, so through time we will have to consider failure of it.

One may write, following the seam:
\[
  f(u_0,v_0+\epsilon_2,w_0)=f(u_0,v_0,w_0)+\begin{pmatrix} b_1 \epsilon_2 \\ g_2\epsilon_2^2 \\ b_3\epsilon_2 \\ g_4\epsilon_2^2\end{pmatrix}
\]
Depending on the sign of $g_4$, this corresponds to a Morse minimum ($g_4>0$) or maximum ($g_4<0$) on the seam. We run our analysis in the case with $g_4>0$, the other case being symmetric. Then, denoting:
\[
  \frac{\partial f}{\partial u} = \begin{pmatrix} a_1 \\ a_2 \\ a_3 \\ a_4\end{pmatrix}
\]
one may assume that $a_4\neq 0$ (this will also have to be considered through time). Up to adding to $u$ a scalar multiple of $v$, one can assume that:
\[
  \frac{\partial f}{\partial u} = \begin{pmatrix} 0 \\ a_2 \\ a_3 \\ a_4\end{pmatrix}
\]
Furthermore, up to adding to $w$ scalar multiples of $u$ and $v$, one can assume that:
\[
  \frac{\partial f}{\partial w} = \begin{pmatrix} 0 \\ c_2 \\ c_3 \\ 0\end{pmatrix}
\]

Thus one may write:
\begin{equation} \label{eq:Taylorzip}
 f(u_0+\epsilon_1,v_0+\epsilon_2,w_0)= \begin{pmatrix} b_1\epsilon_2 + d_1\epsilon_1^2 \\ a_2\epsilon_1+g_2\epsilon_2^2+e_2\epsilon_1\epsilon_2 \\ \ast \\  g_4\epsilon_2^2+a_4\epsilon_1  \end{pmatrix}
\end{equation}

At fixed level $s=r$, one thus has:
\[
  \epsilon_1=\frac{r-g_4\epsilon_2^2}{a_4}
\]
yielding a parametric curve equivalent to:
\[
  (d_1\frac{r^2}{a_4^2}+b_1\epsilon_2,\frac{a_2r}{a_4}+e_2\frac{r}{a_4}\epsilon_2
\]
provided $e_2\neq 0$, or:
\[
  (d_1\frac{r^2}{a_4^2}+b_1\epsilon_2,\frac{a_2r}{a_4}+(g_2-\frac{a_2g_4}{a_4})\epsilon_2^2
\]
if $e_2=0$ (provided $g_2-\frac{a_2g_4}{a_4}\neq 0$, which can be assumed generically if $e_2$ has already been set to zero).

In the first case, one just sees a line, while in the second case one gets a parabola.

Let us now identify the pieces that correspond to $\epsilon_1\geq 0$ (this is the part that will be replaced by the two legs of the web). $\epsilon_1\geq 0$ corresponds to:
\[
  \epsilon_2^2\leq \frac{r}{g_4}\; \text{if}\; a_4>0,\quad \epsilon_2^2\geq \frac{r}{g_4}\; \text{if}\; a_4<0.
\]

Now, to pass from the line to the web, one will replace the portions corresponding to $\epsilon_1\geq 0$ by a two curves, obtained by pushing positively or negatively in the framing direction. Notice that at $(u_0,v_0,w_0)$, the tangent to the line has slope $\frac{e_2r}{a_4b_1}\neq \infty$ (since $b_1$ and $a_4$ both are non-zero) if $e_2\neq 0$, and $0$ if $e_2=0$. Since the framing at $(u_0,v_0,w_0)$ has $(x,y)$ coordinates $(0,c_2)$, in both cases the process of creating the legs creates no crossing nor special framing features (provided $c_2\neq 0)$.

Thus depending whether $a_4>0$ or $a_4<0$, one gets one of the following foam generators:
\[
  \begin{tikzpicture}[anchorbase]
    \node at (0,0) {
\begin{tikzpicture}[anchorbase, scale=.5]
  % Web
  \draw [semithick] (0,2) -- (4,2);
  % Square
  \draw (0,0) rectangle (4,4);
  \end{tikzpicture}};
\node at (2.1,0) {\begin{tikzpicture}[anchorbase, scale=.5]
  % Web
    \draw [semithick] (0,2) -- (1,2);
    \draw [semithick] (1,2) to [out=0,in=180] (2,2.8) to [out=0,in=180] (3,2);
    \draw [semithick] (1,2) to [out=0,in=180] (2,1.2) to [out=0,in=180] (3,2);
    \draw [semithick] (3,2) -- (4,2);
  % Square
  \draw (0,0) rectangle (4,4);
  \end{tikzpicture}};
\end{tikzpicture}
\quad \text{or}\quad
  \begin{tikzpicture}[anchorbase]
    \node at (0,0) {
\begin{tikzpicture}[anchorbase, scale=.5]
  % Web
  \draw [semithick] (0,1.2) to [out=0,in=180] (1,2);
  \draw [semithick] (0,2.8) to [out=0,in=180] (1,2);
  \draw [semithick] (1,2) -- (3,2);
  \draw [semithick] (3,2) to [out=0,in=180] (4,1.2);
  \draw [semithick] (3,2) to [out=0,in=180] (4,2.8);
  % Square
  \draw (0,0) rectangle (4,4);
  \end{tikzpicture}};
\node at (2.1,0) {\begin{tikzpicture}[anchorbase, scale=.5]
  % Web
  \draw [semithick] (0,1.2) -- (4,1.2);
  \draw [semithick] (0,2.8) -- (4,2.8);
  % Square
  \draw (0,0) rectangle (4,4);
  \end{tikzpicture}};
\end{tikzpicture}
\]

Through time, we have to consider the following special cases (which are mutually exclusive):
\begin{enumerate}
\item $g_4=0$;
\item $a_4=0$;
\item $c_2=0$ (which also corresponds to a framing change);
\end{enumerate}

In the case where $g_4=0$, one has to go one order further and appeal to:
\[
  \frac{\partial^3 f}{\partial v^3} = \begin{pmatrix} j_1 \\ j_2 \\ j_3 \\ j_4\end{pmatrix}
\]
One may assume that $j_4\neq 0$ and thus obtain:
\[
  f(u_0,v_0+\epsilon_2,w_0)=f(u_0,v_0,w_0)+\begin{pmatrix} b_1 \epsilon_2 \\ d_2\epsilon_2^2 \\ b_3\epsilon_2 \\ j_4\epsilon_2^3\end{pmatrix}
\]
We care about the last line, that draws a polynomial with a triple root at $0$.

Let us consider also:
\[
    \frac{\partial f}{ \partial t} = \begin{pmatrix} k_1 \\ k_2 \\ k_3 \\ k_4\end{pmatrix},\quad
  \frac{\partial^2 f}{\partial v \partial t} = \begin{pmatrix} l_1 \\ l_2 \\ l_3 \\ l_4\end{pmatrix}
\]
$l_4$ can be assumed to be non-zero. The Taylor expansion thus becomes (we still only focus on the seam):
\[
  f_{t_0+\delta}(u_0,v_0+\epsilon_2,w_0)=f_{t_0}(u_0,v_0,w_0)+\begin{pmatrix} b_1 \epsilon_2 +k_1\delta + l_1\epsilon_2\delta\\ d_2\epsilon_2^2+k_2\delta +l_2\epsilon_2\delta \\ k_3\epsilon_2 +l_3\delta+g_3\epsilon_2\delta\\ j_4\epsilon_2^3+k_4\delta + l_4\epsilon_2\delta \end{pmatrix}
\]
Through time and focusing on the $x$ and $s$ coordinates, one is brought back to a situation similar to the one investigated in the figures drawn at~\eqref{eq:pic3change1} and~\eqref{eq:pic3change2}. Both signs for $a_4$ yield mirror versions of the following move:

\[
  \begin{tikzpicture}[anchorbase]
    \node (A) at (0,0) {
\begin{tikzpicture}[anchorbase, scale=.5]
  % Web
  \draw [semithick] (0,2) -- (2,2);
  \draw [semithick] (2,2) to [out=0,in=180] (4,3);
  \draw [semithick] (2,2) to [out=0,in=180] (4,1);
  % Rectangle
  \draw (0,0) rectangle (4,4);
  \end{tikzpicture}};
    \node (B1) at (-2,-3) {
\begin{tikzpicture}[anchorbase, scale=.5]
  % Web
  \draw [semithick] (0,2) -- (1,2);
  \draw [semithick] (1,2) to [out=0,in=180] (1.5,2.5) to [out=0,in=180] (2,2);
  \draw [semithick] (1,2) to [out=0,in=180] (1.5,1.5) to [out=0,in=180] (2,2);
  \draw [semithick] (2,2) -- (3,2);
  \draw [semithick] (3,2) to [out=0,in=180] (4,3);
  \draw [semithick] (3,2) to [out=0,in=180] (4,1);
  % Rectangle
  \draw (0,0) rectangle (4,4);
  \end{tikzpicture}};
    \node (C) at (0,-6) {
\begin{tikzpicture}[anchorbase, scale=.5]
  % Web
  \draw [semithick] (0,2) -- (2,2);
  \draw [semithick] (2,2) to [out=0,in=180] (4,3);
  \draw [semithick] (2,2) to [out=0,in=180] (4,1);
  % Rectangle
  \draw (0,0) rectangle (4,4);
  \end{tikzpicture}};
\draw [<->] (A) -- (B1)  node [rotate=55, yshift=4, midway] {zip} ;
\draw [<->] (A) -- (C);
\draw [<->] (B1) -- (C)  node [rotate=-55, yshift=-4, midway] {cap} ;
\end{tikzpicture}
\]

Let us now suppose that $a_4=0$. One should consider the second derivative in the $u$ direction, the $s$-coordinate of which we denote $d_4$. One may assume that $d_4\neq 0$. This means that on both sides of the seam, the image of the $w=0$ sheet goes similarly up or down. Since following both split facets corresponds to $\epsilon_3<<\epsilon_1$, we have the same behavior for the two thinnest facets.
Up to mirror, one can assume that $g_4>0$, and we are left with two situations: either $d_4>0$ or $d_4<0$. In the case where $d_4>0$ one reads:
\begin{equation}
  \begin{tikzpicture}[scale=1]
    % % Surrounding square
    % \draw [dashed] (-4,5) rectangle (4,-5);
    \node (A) at (0,0) {
      \begin{tikzpicture}[anchorbase,scale=.5]
        % Web
        \draw [semithick] (2,1) to [out=0,in=-90] (3.3,2) to [out=90,in=0] (2,3);
        \draw [semithick] (2,1) to [out=0,in=-90] (2.7,2) to [out=90,in=0] (2,3);
        \draw [semithick] (2,3) to [out=180,in=90] (1,2) to [out=-90,in=180] (2,1);
        % Square
        \draw (0,0) rectangle (4,4);
      \end{tikzpicture}
    };
    \node (B1) at (-2,-4.5) {
      \begin{tikzpicture}[anchorbase,scale=.5]
        % Web
        \draw [semithick] (2,2) circle (1);
        % Square
        \draw (0,0) rectangle (4,4);
      \end{tikzpicture}
    };
    \node (B2) at (2,-3) {
      \begin{tikzpicture}[anchorbase,scale=.5]
        % Web
        \draw [semithick] (2,2) circle (1.2);
        \draw [semithick] (2,2) circle (.8);
        % Square
        \draw (0,0) rectangle (4,4);
      \end{tikzpicture}
    };
        \node (C2) at (2,-6) {
      \begin{tikzpicture}[anchorbase,scale=.5]
        % Web
        \draw [semithick] (2,2) circle (1.2);
        % Square
        \draw (0,0) rectangle (4,4);
      \end{tikzpicture}
    };
        \node (D) at (0,-9) {
      \begin{tikzpicture}[anchorbase,scale=.5]
        % Web
        % Square
        \draw (0,0) rectangle (4,4);
      \end{tikzpicture}
    };
    \draw [<->] (A) -- (B1);
    \draw [<->] (A) -- (B2);
    \draw [<->] (B2) -- (C2);
    \draw [<->] (B1) -- (D);
    \draw [<->] (C2) -- (D);
  \end{tikzpicture}
\end{equation}

When $d_4<0$, one gets:

\begin{equation}
  \begin{tikzpicture}[scale=1]
    % % Surrounding square
    % \draw [dashed] (-4,5) rectangle (4,-5);
    \node (A) at (0,0) {
      \begin{tikzpicture}[anchorbase,scale=.5]
        % Web
        \draw [semithick] (0,1) -- (4,1);
        \draw [semithick] (0,2) -- (4,2);
        \draw [semithick] (0,3) -- (4,3);
        % Square
        \draw (0,0) rectangle (4,4);
      \end{tikzpicture}
    };
    \node (B1) at (-2,-4.5) {
      \begin{tikzpicture}[anchorbase,scale=.5]
        % Web
        \draw [semithick] (0,1) -- (4,1);
        \draw [semithick] (0,2) to [out=0,in=180] (1.5,2.5);
        \draw [semithick] (0,3) to [out=0,in=180] (1.5,2.5);
        \draw [semithick] (1.5,2.5) -- (2.5,2.5);
        \draw [semithick] (2.5,2.5) to [out=0,in=180] (4,2);
        \draw [semithick] (2.5,2.5) to [out=0,in=180] (4,3);
        % Square
        \draw (0,0) rectangle (4,4);
      \end{tikzpicture}
    };
    \node (B2) at (2,-3) {
      \begin{tikzpicture}[anchorbase,scale=.5]
        % Web
        \draw [semithick] (0,1) -- (1.5,1);
        \draw [semithick] (1.5,1) to [out=0,in=180] (2,1.5) to [out=0,in=180] (2.5,1);
        \draw [semithick] (1.5,1) to [out=0,in=180] (2,.5) to [out=0,in=180] (2.5,1);
        \draw [semithick] (2.5,1) -- (4,1);
        \draw [semithick] (0,2) -- (4,2);
        \draw [semithick] (0,3) -- (4,3);
        % Square
        \draw (0,0) rectangle (4,4);
      \end{tikzpicture}
    };
        \node (C2) at (2,-6) {
      \begin{tikzpicture}[anchorbase,scale=.5]
        % Web
        \draw [semithick] (0,1) -- (1,1);
        \draw [semithick] (1,1) to [out=0,in=0] (1,2) -- (0,2);
        \draw [semithick] (1,1) to [out=0,in=180] (2,.5) to [out=0,in=180] (3,1);
        \draw [semithick] (3,1) to [out=180,in=180]  (3,2) -- (4,2);
        \draw [semithick] (3,1) -- (4,1);
        \draw [semithick] (0,3) -- (4,3);
        % Square
        \draw (0,0) rectangle (4,4);
      \end{tikzpicture}
    };
        \node (D) at (0,-9) {
      \begin{tikzpicture}[anchorbase,scale=.5]
        % Web
        \draw [semithick] (0,2) to [out=0,in=180] (1,2.5);
        \draw [semithick] (0,3) to [out=0,in=180] (1,2.5);
        \draw [semithick] (1,2.5) to [out=0,in=0] (1,1) -- (0,1);
        \draw [semithick] (3,2.5) to [out=180,in=180] (3,1) -- (4,1);
        \draw [semithick] (3,2.5) to [out=0,in=180] (4,2);
        \draw [semithick] (3,2.5) to [out=0,in=180] (4,3);
        % Square
        \draw (0,0) rectangle (4,4);
      \end{tikzpicture}
    };
    \draw [<->] (A) -- (B1);
    \draw [<->] (A) -- (B2);
    \draw [<->] (B2) -- (C2);
    \draw [<->] (B1) -- (D);
    \draw [<->] (C2) -- (D);
  \end{tikzpicture}
\end{equation}

Let us now consider the case where $a_4$ and $g_4$ are both non-zero, but $c_2=0$. One can start the same analysis as in Equation~\eqref{eq:Taylorzip}, but splitting the piece with $\epsilon_1\geq 0$ will be more involved (since the framing crosses the line). It is best seen by pushing the Taylor expansion also in the $w$ direction, writing:
\begin{gather*}
  (f(u,v_0+\epsilon_2,w))_x\sim b_1\epsilon_2+e_1\epsilon_1\epsilon_2 +f_1\epsilon_1\epsilon_3+h_1\epsilon_2\epsilon_3
  \\
  (f(u,v_0+\epsilon_2,w))_y\sim a_2\epsilon_1 +e_2\epsilon_1\epsilon_2 +g_2\epsilon_2^2+f_1\epsilon_1\epsilon_3+h_2\epsilon_2\epsilon_3+i_2\epsilon_3^2.
  \end{gather*}
Again one sees a line at first order in $\epsilon_2$, but the change when $\epsilon_3$ becomes $-\epsilon_3$ is now controlled by $h_2\epsilon_2\epsilon_3$. This creates a crossing, and at $t=t_0$ we get a singular version of the zip/unzip or cap/cup generators (the crossings come of course with a sign):
\[
  % [inline block 4: 74 envs, 47704 chars -> data_tex | \begin{tikzpicture}[anchorbase]     \node at (0,0) {...]

    };
    \draw [<->] (A) -- (B1);
    \draw [<->] (A) -- (B2);
    \draw [<->] (B1) -- (C1);
    \draw [<->] (C1) -- (D);
    \draw [<->] (B2) -- (D);
  \end{tikzpicture}
\end{equation}

\vspace{1cm}

We now consider the second bullet point from the list written at the beginning of this section:
\[
  \vec{z}\in  \Span_\R(\frac{\partial f}{\partial u},\frac{\partial f}{\partial v})
\]
Recall that we have set the $v$ direction to be the one of the seam. Asking that this is parallel to the $\vec{z}$ direction is a codimension $3$ condition, so this is never generic. On the other hand, asking that $\frac{\partial f}{\partial u}$ is a linear combination of the $\vec{z}$ direction and of $\frac{\partial f}{\partial v}$ is a codimension $2$ condition. Through time, we expect isolated points. At such a point we can reparametrize $u$ by a scalar multiple of $v$ so that $\frac{\partial f}{\partial u}$ is genuinely vertical.

Interestingly, the resulting move can be read entirely from the ones in Equation~\eqref{eq:frR1}. Indeed, forgetting one of the two legs of the web does bring us back to the isotopy investigated in Section~\ref{sec:webs}. Now, when the time moves the vertical vector on the side of the web that only has one strand, one gets the same isotopy as in Equation~\eqref{eq:frR1}. When it moves on the other side, focusing on each leg one at a time, we again read the same isotopy. We pretend that the way the two legs entangle can also be read from it: indeed, one leg is the image of the other one by a push along the framing: one just has to take the boundary of the red ribbon in~\eqref{eq:frR1}. The resulting move is equivalent to the following one (or to similar ones with other configurations for crossings and twists):

\begin{equation}
\begin{tikzpicture}[anchorbase,scale=.75]
  \node at (0,0) {
\begin{tikzpicture}[decoration={markings, mark=at position 0.6 with {\arrow{>}};  },fill opacity=.2,scale=.15]
%\draw[dashed] (25,32) rectangle (-25,-32);
%\node[opacity=1] at (0,11) {\bf{Sauron}};
%\node[opacity=1] at (0,-11) {};
\end{tikzpicture}
  };
  \node (A) at (0,0) {
    \begin{tikzpicture}[anchorbase,scale=.5]
      % Square
      \draw (0,0) rectangle (6,4);
      % Strands
      \draw [white, line width=3] (1.5,1) to [out=180,in=180] (1.5,2) -- (3,2);
      \draw [semithick] (1.5,1) to [out=180,in=180] (1.5,2) -- (3,2);
      \draw [white, line width=3] (0,2) to [out=0,in=180] (1.5,2) to [out=0,in=0] (1.5,1);
      \draw [semithick] (0,2) to [out=0,in=180] (1.5,2) to [out=0,in=0] (1.5,1);
      \draw [semithick] (3,2) to [out=0,in=180] (4,3) -- (6,3);
      \draw [semithick] (3,2) to [out=0,in=180] (4,1) -- (6,1);
      \node at (1.8,1.5) {$\circ$};
      \node at (2.1,1.5) {\tiny $-$};
    \end{tikzpicture}
  };
  \node (B1) at (-2,-3) {
    \begin{tikzpicture}[anchorbase,scale=.5]
      % Square
      \draw (0,0) rectangle (6,4);
      % Strands
      \draw [white, line width=3] (1.5,1) to [out=180,in=180] (1.5,2) -- (2,2);
      \draw [semithick] (1.5,1) to [out=180,in=180] (1.5,2) -- (2,2);
      \draw [semithick] (2,2) to [out=0,in=180] (4,3) -- (6,3);
      \draw [semithick] (2,2) to [out=0,in=180] (4,1) -- (6,1);
      \draw [white, line width=3] (0,2) to [out=0,in=180] (2.5,2.5) to [out=0,in=0] (2.5,1) -- (1.5,1);
      \draw [semithick]  (0,2) to [out=0,in=180] (2.5,2.5) to [out=0,in=0] (2.5,1) -- (1.5,1);
      \node at (2,1) {$\circ$};
      \node at (2,.6) {\tiny $-$};
     \end{tikzpicture}
  };
  \node (C1) at (-2,-6) {
    \begin{tikzpicture}[anchorbase,scale=.5]
      % Square
      \draw (0,0) rectangle (6,4);
      % Strands
      \draw [white, line width=3] (1.5,1) to [out=180,in=180] (1.5,2) -- (2,2);
      \draw [semithick] (1.5,1) to [out=180,in=180] (1.5,2) -- (2,2);
      \draw [white, line width=3] (2,2) to [out=0,in=180] (2.5,2.5) to [out=0,in=180] (3,1.5);
      \draw [semithick] (2,2) to [out=0,in=180] (2.5,2.5) to [out=0,in=180] (3,1.5) -- (3.5,1.5);
      \draw [white, line width=3] (2,2) to [out=0,in=180] (2.5,1.5) to [out=0,in=180] (3,2.5);
      \draw [semithick] (2,2) to [out=0,in=180] (2.5,1.5) to [out=0,in=180] (3,2.5) -- (3.5,2.5);
      % \draw [white, line width=3] (3,2.5) to [out=0,in=180] (3.5,1.5);
      % \draw [semithick] (3,2.5) to [out=0,in=180] (3.5,1.5);
      % \draw [white, line width=3] (3,1.5) to [out=0,in=180] (3.5,2.5);
      % \draw [semithick] (3,1.5) to [out=0,in=180] (3.5,2.5);
      \draw [semithick] (3.5,2.5) to [out=0,in=180] (6,3);
      \draw [semithick] (3.5,1.5) to [out=0,in=180]  (6,1);
      \draw [white, line width=3] (0,2) to [out=0,in=180] (5,3.5) to [out=0,in=0] (5,.5) to [out=180,in=0] (1.5,1);
      \draw [semithick]  (0,2) to [out=0,in=180] (5,3.5) to [out=0,in=0] (5,.5) to [out=180,in=0] (1.5,1);
      \node at (2.5,2.5) {$\circ$};
      \node at (2.5,2.3) {\tiny $-$};
      \node at (2.5,1.5) {$\circ$};
      \node at (2.5,1.3) {\tiny $-$};
      % \node at (3,2.5) {$\bullet$};
      % \node at (3,2.7) {\tiny $-$};
      % \node at (3,1.5) {$\bullet$};
      % \node at (3,1.3) {\tiny $-$};
    \end{tikzpicture}
  };
  \node (D1) at (-2,-9) {
    \begin{tikzpicture}[anchorbase,scale=.5]
      % Square
      \draw (0,0) rectangle (6,4);
      % Strands
      % Incoming strand
      \draw [semithick] (0,2) to [out=0,in=180] (1,3);
      % double crossings between outgoing edges
      \draw [semithick] (1,3) to [out=0,in=180] (1.5,3.5);
      \draw [semithick] (1,3) to [out=0,in=180] (1.5,2.5);
      \draw [white, line width=3] (1.5,3.5) to [out=0,in=180] (2,2.5);
      \draw [semithick] (1.5,3.5) to [out=0,in=180] (2,2.5) -- (2.5,2.5);
      \draw [white, line width=3] (1.5,2.5) to [out=0,in=180] (2,3.5);
      \draw [semithick] (1.5,2.5) to [out=0,in=180] (2,3.5) -- (2.5,3.5);
      % \draw [white, line width=3] (2,3.5) to [out=0,in=180] (2.5,2.5);
      % \draw [semithick] (2,3.5) to [out=0,in=180] (2.5,2.5);
      % \draw [white, line width=3] (2,2.5) to [out=0,in=180] (2.5,3.5);
      % \draw [semithick] (2,2.5) to [out=0,in=180] (2.5,3.5);
      % big double curl
      \draw [white, line width=3] (4,.5) to [out=180,in=-90] (3,2) to [out=90,in=180] (4,3.5) to [out=0,in=180] (6,2.5);
      \draw [semithick] (4,.5) to [out=180,in=-90] (3,2) to [out=90,in=180] (4,3.5) to [out=0,in=180] (6,2.5);
      \draw [white, line width=3] (4,1.5) to [out=180,in=-90] (3.5,2) to [out=90,in=180] (4,2.5) to [out=0,in=180] (6,1.5);
      \draw [semithick] (4,1.5) to [out=180,in=-90] (3.5,2) to [out=90,in=180] (4,2.5) to [out=0,in=180] (6,1.5);
      \draw [white, line width=3] (2.5,2.5) -- (4,2.5) to [out=0,in=90] (4.5,2) to [out=-90,in=0] (4,1.5);
      \draw [semithick] (2.5,2.5) -- (4,2.5) to [out=0,in=90] (4.5,2) to [out=-90,in=0] (4,1.5);
      \draw [white, line width=3] (2.5,3.5) -- (4,3.5) to [out=0,in=90] (5,2) to [out=-90,in=0] (4,.5);
      \draw [semithick] (2.5,3.5) -- (4,3.5) to [out=0,in=90] (5,2) to [out=-90,in=0] (4,.5);
      % Twists
      \node at (1.5,3.5) {$\circ$};
      \node at (1.5,3.3) {\tiny $-$};
      \node at (1.5,2.5) {$\circ$};
      \node at (1.5,2.3) {\tiny $-$};
      % \node at (2,3.5) {$\bullet$};
      % \node at (2,3.7) {\tiny $-$};
      % \node at (2,2.5) {$\bullet$};
      % \node at (2,2.3) {\tiny $-$};
    \end{tikzpicture}
  };
  \node (E1) at (-2,-12) {
    \begin{tikzpicture}[anchorbase,scale=.5]
      % Square
      \draw (0,0) rectangle (6,4);
      % Strands
      % Incoming strand
      \draw [semithick] (0,2) to [out=0,in=180] (1,3);
      % double crossings between outgoing edges
      \draw [semithick] (1,3) to [out=0,in=180] (1.5,3.5);
      \draw [semithick] (1,3) to [out=0,in=180] (1.5,2.5);
      \draw [white, line width=3] (1.5,3.5) to [out=0,in=180] (2,2.5);
      \draw [semithick] (1.5,3.5) to [out=0,in=180] (2,2.5) -- (2.5,2.5);
      \draw [white, line width=3] (1.5,2.5) to [out=0,in=180] (2,3.5);
      \draw [semithick] (1.5,2.5) to [out=0,in=180] (2,3.5) -- (2.5,3.5);
      % \draw [white, line width=3] (2,3.5) to [out=0,in=180] (2.5,2.5);
      % \draw [semithick] (2,3.5) to [out=0,in=180] (2.5,2.5);
      % \draw [white, line width=3] (2,2.5) to [out=0,in=180] (2.5,3.5);
      % \draw [semithick] (2,2.5) to [out=0,in=180] (2.5,3.5);
      % curl on bottom strand
      \draw [white, line width=3] (3,1.5) to [out=180,in=-90] (2.5,2) to [out=90,in=180] (3,2.5) -- (3.5,2.5);
      \draw [semithick]  (3,1.5) to [out=180,in=-90] (2.5,2) to [out=90,in=180] (3,2.5) -- (3.5,2.5);
      \draw [white, line width=3] (2.5,2.5) -- (3,2.5) to [out=0,in=90] (3.5,2) to [out=-90,in=0] (3,1.5);
      \draw [semithick] (2.5,2.5) -- (3,2.5) to [out=0,in=90] (3.5,2) to [out=-90,in=0] (3,1.5);
      % top strand
      \draw [semithick] (2.5,3.5) -- (3.5,3.5);
      % double crossing again
      \draw [white, line width=3] (3.5,2.5) to [out=0,in=180] (4,3.5);
      \draw [semithick] (3.5,2.5) to [out=0,in=180] (4,3.5);
      \draw [white, line width=3] (3.5,3.5) to [out=0,in=180] (4,2.5);
      \draw [semithick] (3.5,3.5) to [out=0,in=180] (4,2.5);
      \draw [white, line width=3] (4,2.5) to [out=0,in=180] (4.5,3.5);
      \draw [semithick] (4,2.5) to [out=0,in=180] (4.5,3.5);
      \draw [white, line width=3] (4,3.5) to [out=0,in=180] (4.5,2.5);
      \draw [semithick] (4,3.5) to [out=0,in=180] (4.5,2.5);
      % curl on top strand
      \draw [white, line width=3] (5,2.5) to [out=180,in=-90] (4.5,3) to [out=90,in=180] (5,3.5) to [out=0,in=180] (6,2.5);
      \draw [semithick] (5,2.5) to [out=180,in=-90] (4.5,3) to [out=90,in=180] (5,3.5) to [out=0,in=180] (6,2.5);
      \draw [white, line width=3] (4.5,3.5) -- (5,3.5) to [out=0,in=90] (5.5,3) to [out=-90,in=0] (5,2.5);
      \draw [semithick]  (4.5,3.5) -- (5,3.5) to [out=0,in=90] (5.5,3) to [out=-90,in=0] (5,2.5);
      % bottom strand
      \draw [semithick] (4.5,2.5) to [out=0,in=180] (6,1.5);
      % Twists
      \node at (3.5,2) {$\circ$};
      \node at (3.8,2) {\tiny $-$};
      % \node at (2.5,2) {$\bullet$};
      % \node at (2.2,2) {\tiny $-$};
      \node at (5.3,2.6) {$\circ$};
      \node at (5.5,2.4) {\tiny $-$};
      % \node at (4.5,2.9) {$\bullet$};
      % \node at (4.7,3.1) {\tiny $-$};
    \end{tikzpicture}
  };
  \node (F1) at (-2,-15) {
    \begin{tikzpicture}[anchorbase,scale=.5]
      % Square
      \draw (0,0) rectangle (6,4);
      % Strands
      % Incoming strand
      \draw [semithick] (0,2) to [out=0,in=180] (1,3);
      % double crossings between outgoing edges
      \draw [semithick] (1,3) to [out=0,in=180] (1.5,3.5);
      \draw [semithick] (1,3) to [out=0,in=180] (1.5,2.5);
      \draw [white, line width=3] (1.5,3.5) to [out=0,in=180] (2,2.5);
      \draw [semithick] (1.5,3.5) to [out=0,in=180] (2,2.5) -- (2.5,2.5);
      \draw [white, line width=3] (1.5,2.5) to [out=0,in=180] (2,3.5);
      \draw [semithick] (1.5,2.5) to [out=0,in=180] (2,3.5) -- (2.5,3.5);
      % \draw [white, line width=3] (2,3.5) to [out=0,in=180] (2.5,2.5);
      % \draw [semithick] (2,3.5) to [out=0,in=180] (2.5,2.5);
      % \draw [white, line width=3] (2,2.5) to [out=0,in=180] (2.5,3.5);
      % \draw [semithick] (2,2.5) to [out=0,in=180] (2.5,3.5);
      % curl on bottom strand
      \draw [semithick] (2.5,2.5) -- (3.5,2.5);
      % top strand
      \draw [semithick] (2.5,3.5) -- (3.5,3.5);
      % double crossing again
      \draw [white, line width=3] (3.5,2.5) to [out=0,in=180] (4,3.5);
      \draw [semithick] (3.5,2.5) to [out=0,in=180] (4,3.5);
      \draw [white, line width=3] (3.5,3.5) to [out=0,in=180] (4,2.5);
      \draw [semithick] (3.5,3.5) to [out=0,in=180] (4,2.5);
      \draw [white, line width=3] (4,2.5) to [out=0,in=180] (4.5,3.5);
      \draw [semithick] (4,2.5) to [out=0,in=180] (4.5,3.5);
      \draw [white, line width=3] (4,3.5) to [out=0,in=180] (4.5,2.5);
      \draw [semithick] (4,3.5) to [out=0,in=180] (4.5,2.5);
      % curl on top strand
      \draw [semithick]  (4.5,3.5) to [out=0,in=180] (6,2.5);
      % bottom strand
      \draw [semithick] (4.5,2.5) to [out=0,in=180] (6,1.5);
      % Twists
      \node at (3,2.5) {$\bullet$};
      \node at (3,2.1) {\tiny $+$};
      \node at (3,3.5) {$\bullet$};
      \node at (3,3.9) {\tiny $+$};
    \end{tikzpicture}
  };
    \node (G) at (-2,-18) {
    \begin{tikzpicture}[anchorbase,scale=.5]
      % Square
      \draw (0,0) rectangle (6,4);
      % Strands
      % Incoming strand
      \draw [semithick] (0,2) to [out=0,in=180] (1,3);
      %  crossings between outgoing edges
      \draw [semithick] (1,3) to [out=0,in=180] (1.5,3.5);
      \draw [semithick] (1,3) to [out=0,in=180] (1.5,2.5);
      \draw [white, line width=3] (1.5,2.5) to [out=0,in=180] (2,3.5);
      \draw [semithick] (1.5,2.5) to [out=0,in=180] (2,3.5) -- (2.5,3.5);
      \draw [white, line width=3] (1.5,3.5) to [out=0,in=180] (2,2.5);
      \draw [semithick] (1.5,3.5) to [out=0,in=180] (2,2.5) -- (2.5,2.5);
      % curl on bottom strand
      \draw [semithick] (2.5,2.5) -- (3.5,2.5) -- (4.5,2.5);
      % top strand
      \draw [semithick] (2.5,3.5) -- (3.5,3.5) -- (4.5,3.5);
      % curl on top strand
      \draw [semithick]  (4.5,3.5) to [out=0,in=180] (6,2.5);
      % bottom strand
      \draw [semithick] (4.5,2.5) to [out=0,in=180] (6,1.5);
      % Twists
      \node at (1.5,2.5) {$\bullet$};
      \node at (1.5,2.1) {\tiny $+$};
      \node at (1.5,3.5) {$\bullet$};
      \node at (1.5,3.9) {\tiny $+$};
    \end{tikzpicture}
  };
  \node (H) at (0,-21) {
    \begin{tikzpicture}[anchorbase,scale=.5]
      % Square
      \draw (0,0) rectangle (6,4);
      % Strands
      \draw [semithick] (0,2) -- (3,2);
      \draw [semithick] (3,2) to [out=0,in=180] (4,3) -- (6,3);
      \draw [semithick] (3,2) to [out=0,in=180] (4,1) -- (6,1);
      % Twists
      \node at (1.5,2) {$\bullet$};
      \node at (1.5,2.5) {\tiny $+$};
    \end{tikzpicture}
  };
  \draw [-] (A) -- (B1);
  \draw [-] (B1) -- (C1);
  \draw [-] (C1) -- (D1);
  \draw [-] (D1) -- (E1);
  \draw [-] (E1) -- (F1);
  \draw [-] (F1) -- (G);
  \draw [-] (G) -- (H);
  \draw [<->] (A) to [out=-70,in=70] (H);
\end{tikzpicture}
\end{equation}

For the third bullet point on page~\pageref{itemize:F1}, requiring that $\frac{\partial f}{\partial w}\in \Span_\R\left(\frac{\partial f}{\partial u},\frac{\partial f}{\partial v},\vec{z}\right)$ at $M\in F^{1}$ is a codimension $3$ condition ($2$ from restricting to $F^1$, $1$ from the framing assumption). This creates foam generators that we have already investigated in Theorem~\ref{th:ReidHTwist}, namely:
\[
  \begin{tikzpicture}[anchorbase,rotate=90]
    \draw [semithick, postaction={decorate}] (0,0) to [out=0,in=180] (.3,-.3) to [out=0,in=180] (1,.5);
    \draw [white, line width=3] (0,0) to [out=0,in=180] (.3,.3) to [out=0,in=180] (1,-.5);
    \draw [semithick, postaction={decorate}] (-1,0) -- (0,0);
    \draw [semithick, postaction={decorate}] (0,0) to [out=0,in=180] (.3,.3) to [out=0,in=180] (1,-.5);
    \fill [red,opacity=.5] (-1,0) -- (0,0) -- (0,-.2) -- (-1,-.2);
    \fill [red,opacity=.5] (0,0) to [out=0,in=180] (.3,.3) to [out=0,in=180] (1,-.5) -- (1,-.3) to [out=180,in=-60] (.6,.2) to [out=120,in=0] (.3,.2) to [out=180,in=0] (0,-.2);
    \fill [red, opacity=.5] (0,0) to [out=0,in=180] (.3,-.3) to [out=0,in=180] (1,.5) -- (1,.7) to [out=180,in=60] (.6,.2) to [out=-120,in=0] (.3,-.4) to [out=180,in=0] (0,-.2);
   \node at (.5,-.23) {$\circ$};
    \node at (.5,-.5) {\tiny $+1$};
    \node at (.5,.23) {$\circ$};
    \node at (.5,.45) {\tiny $+1$};
      \end{tikzpicture}
  \quad \leftrightarrow \quad
  \begin{tikzpicture}[anchorbase,rotate=90]
    \draw [semithick, postaction={decorate}] (0,0) to [out=0,in=180] (1,.5);
    \draw [semithick, postaction={decorate}] (-1,0) -- (0,0);
    \draw [semithick, postaction={decorate}] (0,0) to [out=0,in=180] (1,-.5);
    \fill [red,opacity=.5] (-1,0) -- (-.3,0) -- (-1,-.2);
    \fill [red,opacity=.5]  (-.3,0) -- (0,0) -- (0,.2);
    \fill [red,opacity=.5] (0,0) to [out=0,in=180] (1,.5) -- (1,.7) to [out=180,in=0] (0,.2);
    \fill [red,opacity=.5] (0,0) to [out=0,in=180] (1,-.5) -- (1,-.3) to [out=180,in=0] (0,.2);
    \node at (-.3,0) {$\circ$};
    \node at (-.3,-.3) {\tiny $+1$};
  \end{tikzpicture}
  \quad,\quad
  \begin{tikzpicture}[anchorbase,rotate=-90,xscale=-1]
    \draw [semithick, postaction={decorate}] (0,0) to [out=0,in=180] (.3,.3) to [out=0,in=180] (1,-.5);
    \draw [white, line width=3] (0,0) to [out=0,in=180] (.3,-.3) to [out=0,in=180] (1,.5);
    \draw [semithick, postaction={decorate}] (-1,0) -- (0,0);
    \draw [semithick, postaction={decorate}] (0,0) to [out=0,in=180] (.3,-.3) to [out=0,in=180] (1,.5);
    \fill [red,opacity=.5] (-1,0) -- (0,0) -- (0,-.2) -- (-1,-.2);
    \fill [red,opacity=.5] (0,0) to [out=0,in=180] (.3,.3) to [out=0,in=180] (1,-.5) -- (1,-.3) to [out=180,in=-60] (.6,.2) to [out=120,in=0] (.3,.2) to [out=180,in=0] (0,-.2);
    \fill [red, opacity=.5] (0,0) to [out=0,in=180] (.3,-.3) to [out=0,in=180] (1,.5) -- (1,.7) to [out=180,in=60] (.6,.2) to [out=-120,in=0] (.3,-.4) to [out=180,in=0] (0,-.2);
   \node at (.5,-.23) {$\bullet$};
    \node at (.5,-.5) {\tiny $+1$};
    \node at (.5,.23) {$\bullet$};
    \node at (.5,.45) {\tiny $+1$};
      \end{tikzpicture}
  \quad \leftrightarrow \quad
  \begin{tikzpicture}[anchorbase,rotate=-90,xscale=-1]
    \draw [semithick, postaction={decorate}] (0,0) to [out=0,in=180] (1,.5);
    \draw [semithick, postaction={decorate}] (-1,0) -- (0,0);
    \draw [semithick, postaction={decorate}] (0,0) to [out=0,in=180] (1,-.5);
    \fill [red,opacity=.5] (-1,0) -- (-.3,0) -- (-1,-.2);
    \fill [red,opacity=.5]  (-.3,0) -- (0,0) -- (0,.2);
    \fill [red,opacity=.5] (0,0) to [out=0,in=180] (1,.5) -- (1,.7) to [out=180,in=0] (0,.2);
    \fill [red,opacity=.5] (0,0) to [out=0,in=180] (1,-.5) -- (1,-.3) to [out=180,in=0] (0,.2);
    \node at (-.3,0) {$\bullet$};
    \node at (-.3,-.3) {\tiny $+1$};
  \end{tikzpicture}
\]

\[
  \begin{tikzpicture}[anchorbase,rotate=90]
    \draw [semithick, postaction={decorate}] (0,0) to [out=0,in=180] (.3,.3) to [out=0,in=180] (1,-.5);
    \draw [white, line width=3] (0,0) to [out=0,in=180] (.3,-.3) to [out=0,in=180] (1,.5);
    \draw [semithick, postaction={decorate}] (-1,0) -- (0,0);
    \draw [semithick, postaction={decorate}] (0,0) to [out=0,in=180] (.3,-.3) to [out=0,in=180] (1,.5);
    \fill [red,opacity=.5] (-1,0) -- (0,0) -- (0,-.2) -- (-1,-.2);
    \fill [red,opacity=.5] (0,0) to [out=0,in=180] (.3,.3) to [out=0,in=180] (1,-.5) -- (1,-.3) to [out=180,in=-60] (.6,.2) to [out=120,in=0] (.3,.2) to [out=180,in=0] (0,-.2);
    \fill [red, opacity=.5] (0,0) to [out=0,in=180] (.3,-.3) to [out=0,in=180] (1,.5) -- (1,.7) to [out=180,in=60] (.6,.2) to [out=-120,in=0] (.3,-.4) to [out=180,in=0] (0,-.2);
   \node at (.5,-.23) {$\bullet$};
    \node at (.5,-.5) {\tiny $-1$};
    \node at (.5,.23) {$\bullet$};
    \node at (.5,.45) {\tiny $-1$};
      \end{tikzpicture}
  \quad \leftrightarrow \quad
  \begin{tikzpicture}[anchorbase,rotate=90]
    \draw [semithick, postaction={decorate}] (0,0) to [out=0,in=180] (1,.5);
    \draw [semithick, postaction={decorate}] (-1,0) -- (0,0);
    \draw [semithick, postaction={decorate}] (0,0) to [out=0,in=180] (1,-.5);
    \fill [red,opacity=.5] (-1,0) -- (-.3,0) -- (-1,-.2);
    \fill [red,opacity=.5]  (-.3,0) -- (0,0) -- (0,.2);
    \fill [red,opacity=.5] (0,0) to [out=0,in=180] (1,.5) -- (1,.7) to [out=180,in=0] (0,.2);
    \fill [red,opacity=.5] (0,0) to [out=0,in=180] (1,-.5) -- (1,-.3) to [out=180,in=0] (0,.2);
    \node at (-.3,0) {$\bullet$};
    \node at (-.3,-.3) {\tiny $-1$};
  \end{tikzpicture}
  \quad,\quad
  \begin{tikzpicture}[anchorbase,rotate=-90,xscale=-1]
    \draw [semithick, postaction={decorate}] (0,0) to [out=0,in=180] (.3,-.3) to [out=0,in=180] (1,.5);
    \draw [white, line width=3] (0,0) to [out=0,in=180] (.3,.3) to [out=0,in=180] (1,-.5);
    \draw [semithick, postaction={decorate}] (-1,0) -- (0,0);
    \draw [semithick, postaction={decorate}] (0,0) to [out=0,in=180] (.3,.3) to [out=0,in=180] (1,-.5);
    \fill [red,opacity=.5] (-1,0) -- (0,0) -- (0,-.2) -- (-1,-.2);
    \fill [red,opacity=.5] (0,0) to [out=0,in=180] (.3,.3) to [out=0,in=180] (1,-.5) -- (1,-.3) to [out=180,in=-60] (.6,.2) to [out=120,in=0] (.3,.2) to [out=180,in=0] (0,-.2);
    \fill [red, opacity=.5] (0,0) to [out=0,in=180] (.3,-.3) to [out=0,in=180] (1,.5) -- (1,.7) to [out=180,in=60] (.6,.2) to [out=-120,in=0] (.3,-.4) to [out=180,in=0] (0,-.2);
   \node at (.5,-.23) {$\circ$};
    \node at (.5,-.5) {\tiny $-1$};
    \node at (.5,.23) {$\circ$};
    \node at (.5,.45) {\tiny $-1$};
      \end{tikzpicture}
  \quad \leftrightarrow \quad
  \begin{tikzpicture}[anchorbase,rotate=-90,xscale=-1]
    \draw [semithick, postaction={decorate}] (0,0) to [out=0,in=180] (1,.5);
    \draw [semithick, postaction={decorate}] (-1,0) -- (0,0);
    \draw [semithick, postaction={decorate}] (0,0) to [out=0,in=180] (1,-.5);
    \fill [red,opacity=.5] (-1,0) -- (-.3,0) -- (-1,-.2);
    \fill [red,opacity=.5]  (-.3,0) -- (0,0) -- (0,.2);
    \fill [red,opacity=.5] (0,0) to [out=0,in=180] (1,.5) -- (1,.7) to [out=180,in=0] (0,.2);
    \fill [red,opacity=.5] (0,0) to [out=0,in=180] (1,-.5) -- (1,-.3) to [out=180,in=0] (0,.2);
    \node at (-.3,0) {$\circ$};
    \node at (-.3,-.3) {\tiny $-1$};
  \end{tikzpicture}
\]

To get the above generators, one has to assume that we are away from the first two bullet points from the list on page~\pageref{itemize:F1}, and also that $\frac{\partial^2f}{\partial u \partial w}$ and $\frac{\partial^2 f}{\partial v \partial w}$ have non-zero $y$ coordinates. To give a bit more detail, one can reduce to the preferred situation where:
\[
  \frac{\partial f}{\partial u}=\begin{pmatrix} a_1 \\ 0 \\ a_3 \\ 0 \end{pmatrix}, \quad a_1>0,\quad  \frac{\partial f}{\partial v}=\begin{pmatrix}0 \\ b_2 \\ b_3 \\ b_4 \end{pmatrix}, \quad b_4\neq 0,\quad
  \frac{\partial f}{\partial w}=\begin{pmatrix}  0 \\ 0 \\ c_3 \\ 0 \end{pmatrix}
\]
Consider our usual notations that coordinates in $\partial u^2$ are denoted $d_i$, in $\partial u\partial v$ $e_i$, in $\partial u\partial w$ $f_i$, in $\partial v^2 $ $g_i$, in $\partial v \partial w$ $h_i$. Then on a typical slice, the framing grows parallel to a line generated by $\begin{pmatrix}f_1 \\ f_2 \\ f_3 \\ f_4\end{pmatrix}$. The framing vector will be on one or the other side of the tangent plane, the precise side being determined by the following determinant (at $(u_0+\epsilon_1,v_0,w_0)$):
\[
  \begin{vmatrix}
    f_1\epsilon_1 & a_1 & e_1\epsilon_1 \\
    f_2\epsilon_1 & d_2 \epsilon_1 & b_2 \\
    f_4\epsilon_1 & d_4 \epsilon_1 & b_4
    \end{vmatrix}
  \]
  One gets a polynomial in $\epsilon_1$ of the form $\epsilon_1(f_4a_1b_2-f_2a_1b_4+B\epsilon_1+C\epsilon_1^2)$. Provided $f_4a_1b_2-f_2a_1b_4\neq 0$, the sign changes when $\epsilon_1$ passes zero.

  Similarly, following the seam, one gets at $(u_0,v_0+\epsilon_2,w_0)$:
\[
  \begin{vmatrix}
    h_1\epsilon_2 & a_1 & g_1\epsilon_2 \\
    h_2\epsilon_2 & e_2 \epsilon_2 & b_2 \\
    h_4\epsilon_2 & e_4 \epsilon_2 & b_4
    \end{vmatrix}
  \]
We get that the sign will change if $h_4a_1b_2-h_2a_1b_4\neq 0$.
This ensures that the line of framing change is generic, in the sense that it goes from the bottom on one side of the seam to the top on the other side.

Through time, there could be more degeneration from the list on page~\pageref{itemize:F1}, or the two conditions listed above could fail. In each case we get codimensions $1$ conditions, so we only have to consider one case at a time. Notice that the second bullet point is not compatible with the third one, as we have assumed that the $3$ derivatives form a rank $3$ matrix.

In case one has $f_4a_1b_2-f_2a_1b_4=0$, then the framing line stays on one side of the seam. We get as movie move the invertibility of a half-twist passing through a vertex:

\begin{equation}
% [inline block 5: 39 envs, 21613 chars -> data_tex | \begin{tikzpicture}[anchorbase]   \node at (0,0) {...]

  };
  \draw [-] (A) -- (B1);
  \draw [-] (B1) -- (C1);
  \draw [-] (C1) -- (D1);
  \draw [-] (D1) -- (E1);
  \draw [-] (E1) -- (F);
  \draw [<->] (A) to [out=-70,in=70] (F);
\end{tikzpicture}
\end{equation}

Finally, if $\frac{\partial f}{\partial v}$ has zero $s$-coordinate, one gets back the moves~\eqref{eq:crzcup} to~\eqref{eq:crzip}.

%%%%%%%%%%%%%%%%%%%%%%%%%%%%%%%%
\subsection{Neighborhood of a point on $F^0$}
%%%%%%%%%%%%%%%%%%%%%%%%%%%%%%%%%

We now consider the neighborhood of a point $M=(u_0,v_0,w_0)\in F^0$. Restricting to $F^0$ is already a codimension $3$ condition, so at the level of foams no other degeneration will generically occur. Assume as in picture~\eqref{eq:infl6val} that the local chart in $u,v,w$ is such that the tangent vector along both seams at $M$ is parallel to the $v$ direction.

Locally on the vertical seam, one has:
\[
  f(u_0,v_0+\epsilon_2,w_0)=f(u_0,v_0,w_0)+\begin{pmatrix}b_1 \epsilon_2 \\ b_2 \epsilon_2 \\ b_3\epsilon_2 \\ b_4\epsilon_2\end{pmatrix}+o(\epsilon_2)
\]

Assuming $b_4\neq 0$, the image of the seam will have a non-trivial component in the $s$-direction. In order to have the expected generator, one wishes that $\begin{pmatrix} 0 \\ 0 \\ 1 \\ 0\end{pmatrix}\notin \Span_\R\left(\frac{\partial f}{\partial u},\frac{\partial f}{\partial v}\right)$. Assuming the converse is a codimension $2$ condition, so this will never happen, even through time.

Up to change of coordinates $u,v,w$ and rotation in the $x,y$ plane, one can thus assume that:
\[
  \frac{\partial f}{\partial u}=\begin{pmatrix} a_1 \\ 0 \\ a_3 \\ 0 \end{pmatrix}, \;a_1>0,\quad
  \frac{\partial f}{\partial v}=\begin{pmatrix} 0 \\ b_2 \\ b_3 \\ b_4 \end{pmatrix},
  \frac{\partial f}{\partial w}=\begin{pmatrix} 0 \\ c_2 \\ c_3 \\0 \end{pmatrix}
\]

Finally, one can assume that $c_2\neq 0$ (which also ensures that the framing is locally constant and non-vertical). This gives the classical foam generator:

\[
  \begin{tikzpicture}[anchorbase]
    \node at (0,0)
    {
      \begin{tikzpicture}[anchorbase,scale=.5]
        % rectangle
        \draw (0,0) rectangle (4,4);
        % Web
        \draw [semithick] (0,2) -- (1,2);
        \draw [semithick] (1,2) to [out=0,in=180] (2,2.5);
        \draw [semithick] (2,2.5) to [out=0,in=180] (4,3);
        \draw [semithick] (2,2.5) to [out=0,in=180] (4,2);
        \draw [semithick] (1,2) to [out=0,in=180] (4,1);
      \end{tikzpicture}
    };
    \node at (2.1,0)
    {
      \begin{tikzpicture}[anchorbase,scale=.5]
        % rectangle
        \draw (0,0) rectangle (4,4);
        % Web
        \draw [semithick] (0,2) -- (1,2);
        \draw [semithick] (1,2) to [out=0,in=180] (2,1.5);
        \draw [semithick] (2,1.5) to [out=0,in=180] (4,1);
        \draw [semithick] (2,1.5) to [out=0,in=180] (4,2);
        \draw [semithick] (1,2) to [out=0,in=180] (4,3);
      \end{tikzpicture}
    };
  \end{tikzpicture}
\]

Through time, we have to analyze the codimension $1$ conditions that $b_4=0$ or $c_2=0$. For codimension reasons, they generically won't happen together.

Let us start with $b_4=0$. Then following the vertical seam, one reads:
\[
  f(u_0,v_0+\epsilon_2,w_0)=f(u_0,v_0,w_0)+ \begin{pmatrix} \epsilon_2b_1 \\ \epsilon_2 b_2 \\ \epsilon_2 b_3 \\ \epsilon_2^2 g_4 \end{pmatrix}
\]
For codimension reason, one may assume that $g_4\neq 0$. One thus sees a bending seam. At $t_0+\delta$ though, one picks up a contribution of the $s$-coordinate of $\frac{\partial^2 f}{\partial v \partial t}$, that can be assumed to be non-zero. Depending on the sign of $\delta$, the Morse extremum and the 6-valent point will split in one or the other direction. One reads the following movie-moves:

\begin{equation}
\begin{tikzpicture}[anchorbase]
  \node at (0,0) {
\begin{tikzpicture}[decoration={markings, mark=at position 0.6 with {\arrow{>}};  },fill opacity=.2,scale=.15]
\end{tikzpicture}
  };
  \node (A) at (0,0) {
    \begin{tikzpicture}[anchorbase,scale=.5]
      % Square
      \draw (0,0) rectangle (6,4);
      % Strands
      \draw [semithick] (0,3) to [out=0,in=180] (1,2.5);
      \draw [semithick] (0,2) to [out=0,in=180] (1,2.5);
      \draw [semithick] (1,2.5) to [out=0,in=180] (2,2);
      \draw [semithick] (0,1) to [out=0,in=180] (2,2);
      \draw [semithick] (2,2) -- (4,2);
      \draw [semithick] (4,2) to  [out=0,in=180] (5,1.5);
      \draw [semithick] (5,1.5) to [out=0,in=180] (6,1);
      \draw [semithick] (5,1.5) to [out=0,in=180] (6,2);
      \draw [semithick] (4,2) to [out=0,in=180] (6,3);
    \end{tikzpicture}
  };
  \node (B1) at (-2,-3) {
    \begin{tikzpicture}[anchorbase,scale=.5]
      % Square
      \draw (0,0) rectangle (6,4);
      % Strands
      \draw [semithick] (0,3) to [out=0,in=180] (1,2.5);
      \draw [semithick] (0,2) to [out=0,in=180] (1,2.5);
      \draw [semithick] (1,2.5) to [out=0,in=180] (2,2);
      \draw [semithick] (0,1) to [out=0,in=180] (2,2);
      \draw [semithick] (2,2) -- (4,2);
      \draw [semithick] (4,2) to  [out=0,in=180] (5,2.5);
      \draw [semithick] (5,2.5) to [out=0,in=180] (6,3);
      \draw [semithick] (5,2.5) to [out=0,in=180] (6,2);
      \draw [semithick] (4,2) to [out=0,in=180] (6,1);
     \end{tikzpicture}
  };
  \node (B2) at (2,-3) {
    \begin{tikzpicture}[anchorbase,scale=.5]
      % Square
      \draw (0,0) rectangle (6,4);
      % Strands
      \draw [semithick] (0,1) to [out=0,in=180] (1,1.5);
      \draw [semithick] (0,2) to [out=0,in=180] (1,1.5);
      \draw [semithick] (1,1.5) to [out=0,in=180] (2,2);
      \draw [semithick] (0,3) to [out=0,in=180] (2,2);
      \draw [semithick] (2,2) -- (4,2);
      \draw [semithick] (4,2) to  [out=0,in=180] (5,1.5);
      \draw [semithick] (5,1.5) to [out=0,in=180] (6,1);
      \draw [semithick] (5,1.5) to [out=0,in=180] (6,2);
      \draw [semithick] (4,2) to [out=0,in=180] (6,3);
    \end{tikzpicture}
  };
  \node (C1) at (-2,-6) {
    \begin{tikzpicture}[anchorbase,scale=.5]
      % Square
      \draw (0,0) rectangle (6,4);
      % Strands
      \draw [semithick] (0,3) to [out=0,in=180] (2,2.5);
      \draw [semithick] (0,2) to [out=0,in=180] (2,2.5);
      \draw [semithick] (2,2.5) -- (4,2.5);
      \draw [semithick] (0,1) -- (6,1);
      \draw [semithick] (4,2.5) to  [out=0,in=180] (6,3);
      \draw [semithick] (4,2.5) to [out=0,in=180] (6,2);
     \end{tikzpicture}
  };
  \node (C2) at (2,-6) {
    \begin{tikzpicture}[anchorbase,scale=.5]
      % Square
      \draw (0,0) rectangle (6,4);
      % Strands
      \draw [semithick] (0,1) to [out=0,in=180] (2,1.5);
      \draw [semithick] (0,2) to [out=0,in=180] (2,1.5);
      \draw [semithick] (2,1.5) -- (4,1.5);
      \draw [semithick] (0,3) -- (6,3);
      \draw [semithick] (4,1.5) to  [out=0,in=180] (6,1);
      \draw [semithick] (4,1.5) to [out=0,in=180] (6,2);
     \end{tikzpicture}
  };
  \node (D) at (0,-9) {
    \begin{tikzpicture}[anchorbase,scale=.5]
      % Square
      \draw (0,0) rectangle (6,4);
      % Strands
      \draw [semithick] (0,1) -- (6,1);
      \draw [semithick] (0,2) -- (6,2);
      \draw [semithick] (0,3) -- (6,3);
    \end{tikzpicture}
  };
  \draw [<->] (A) -- (B1);
  \draw [<->] (A) -- (B2);
  \draw [<->] (B1) -- (C1);
  \draw [<->] (B2) -- (C2);
  \draw [<->] (C1) -- (D);
  \draw [<->] (C2) -- (D);
\end{tikzpicture}
\quad \text{or}\quad
\begin{tikzpicture}[anchorbase]
  \node at (0,0) {
\begin{tikzpicture}[decoration={markings, mark=at position 0.6 with {\arrow{>}};  },fill opacity=.2,scale=.15]
\end{tikzpicture}
  };
  \node (A) at (0,0) {
    \begin{tikzpicture}[anchorbase,scale=.5,yscale=-1]
      % Square
      \draw (0,0) rectangle (6,4);
      % Strands
      \draw [semithick] (0,3) to [out=0,in=180] (1,2.5);
      \draw [semithick] (0,2) to [out=0,in=180] (1,2.5);
      \draw [semithick] (1,2.5) to [out=0,in=180] (2,2);
      \draw [semithick] (0,1) to [out=0,in=180] (2,2);
      \draw [semithick] (2,2) -- (4,2);
      \draw [semithick] (4,2) to  [out=0,in=180] (5,1.5);
      \draw [semithick] (5,1.5) to [out=0,in=180] (6,1);
      \draw [semithick] (5,1.5) to [out=0,in=180] (6,2);
      \draw [semithick] (4,2) to [out=0,in=180] (6,3);
    \end{tikzpicture}
  };
  \node (B1) at (-2,-3) {
    \begin{tikzpicture}[anchorbase,scale=.5,yscale=-1]
      % Square
      \draw (0,0) rectangle (6,4);
      % Strands
      \draw [semithick] (0,3) to [out=0,in=180] (1,2.5);
      \draw [semithick] (0,2) to [out=0,in=180] (1,2.5);
      \draw [semithick] (1,2.5) to [out=0,in=180] (2,2);
      \draw [semithick] (0,1) to [out=0,in=180] (2,2);
      \draw [semithick] (2,2) -- (4,2);
      \draw [semithick] (4,2) to  [out=0,in=180] (5,2.5);
      \draw [semithick] (5,2.5) to [out=0,in=180] (6,3);
      \draw [semithick] (5,2.5) to [out=0,in=180] (6,2);
      \draw [semithick] (4,2) to [out=0,in=180] (6,1);
     \end{tikzpicture}
  };
  \node (B2) at (2,-3) {
    \begin{tikzpicture}[anchorbase,scale=.5,yscale=-1]
      % Square
      \draw (0,0) rectangle (6,4);
      % Strands
      \draw [semithick] (0,1) to [out=0,in=180] (1,1.5);
      \draw [semithick] (0,2) to [out=0,in=180] (1,1.5);
      \draw [semithick] (1,1.5) to [out=0,in=180] (2,2);
      \draw [semithick] (0,3) to [out=0,in=180] (2,2);
      \draw [semithick] (2,2) -- (4,2);
      \draw [semithick] (4,2) to  [out=0,in=180] (5,1.5);
      \draw [semithick] (5,1.5) to [out=0,in=180] (6,1);
      \draw [semithick] (5,1.5) to [out=0,in=180] (6,2);
      \draw [semithick] (4,2) to [out=0,in=180] (6,3);
    \end{tikzpicture}
  };
  \node (C1) at (-2,-6) {
    \begin{tikzpicture}[anchorbase,scale=.5,yscale=-1]
      % Square
      \draw (0,0) rectangle (6,4);
      % Strands
      \draw [semithick] (0,3) to [out=0,in=180] (2,2.5);
      \draw [semithick] (0,2) to [out=0,in=180] (2,2.5);
      \draw [semithick] (2,2.5) -- (4,2.5);
      \draw [semithick] (0,1) -- (6,1);
      \draw [semithick] (4,2.5) to  [out=0,in=180] (6,3);
      \draw [semithick] (4,2.5) to [out=0,in=180] (6,2);
     \end{tikzpicture}
  };
  \node (C2) at (2,-6) {
    \begin{tikzpicture}[anchorbase,scale=.5,yscale=-1]
      % Square
      \draw (0,0) rectangle (6,4);
      % Strands
      \draw [semithick] (0,1) to [out=0,in=180] (2,1.5);
      \draw [semithick] (0,2) to [out=0,in=180] (2,1.5);
      \draw [semithick] (2,1.5) -- (4,1.5);
      \draw [semithick] (0,3) -- (6,3);
      \draw [semithick] (4,1.5) to  [out=0,in=180] (6,1);
      \draw [semithick] (4,1.5) to [out=0,in=180] (6,2);
     \end{tikzpicture}
  };
  \node (D) at (0,-9) {
    \begin{tikzpicture}[anchorbase,scale=.5,yscale=-1]
      % Square
      \draw (0,0) rectangle (6,4);
      % Strands
      \draw [semithick] (0,1) -- (6,1);
      \draw [semithick] (0,2) -- (6,2);
      \draw [semithick] (0,3) -- (6,3);
    \end{tikzpicture}
  };
  \draw [<->] (A) -- (B1);
  \draw [<->] (A) -- (B2);
  \draw [<->] (B1) -- (C1);
  \draw [<->] (B2) -- (C2);
  \draw [<->] (C1) -- (D);
  \draw [<->] (C2) -- (D);
\end{tikzpicture}
\end{equation}

\begin{equation}
  \begin{tikzpicture}[anchorbase]
  \node at (0,0) {
\begin{tikzpicture}[decoration={markings, mark=at position 0.6 with {\arrow{>}};  },fill opacity=.2,scale=.15]
\end{tikzpicture}
  };
  \node (A) at (0,0) {
    \begin{tikzpicture}[anchorbase,scale=.5]
      % Square
      \draw (0,0) rectangle (4,4);
      % Strands
      \draw [semithick] (0,2) -- (.5,2);
      \draw [semithick] (.5,2) to [out=0,in=180] (1.5,1);
      \draw [semithick] (.5,2) to [out=0,in=180] (1.5,3) -- (2.5,3);
      \draw [semithick] (1.5,1) to [out=0,in=180] (2.5,3);
      \draw [semithick] (1.5,1) -- (2.5,1) to [out=0,in=180] (3.5,2);
      \draw [semithick] (2.5,3) to [out=0,in=180] (3.5,2);
      \draw [semithick] (3.5,2) -- (4,2);
    \end{tikzpicture}
  };
  \node (B1) at (-2,-3) {
    \begin{tikzpicture}[anchorbase,scale=.5]
      % Square
      \draw (0,0) rectangle (4,4);
      % Strands
      \draw [semithick] (0,2) -- (.5,2);
      \draw [semithick] (.5,2) to [out=0,in=180] (1.5,1);
      \draw [semithick] (.5,2) to [out=0,in=180] (1.5,3) -- (2.5,3);
      \draw [semithick] (1.5,1) to [out=0,in=180] (2,2) to [out=0,in=180] (2.5,1);
      \draw [semithick] (1.5,1) -- (2.5,1) to [out=0,in=180] (3.5,2);
      \draw [semithick] (2.5,3) to [out=0,in=180] (3.5,2);
      \draw [semithick] (3.5,2) -- (4,2);
     \end{tikzpicture}
  };
  \node (B2) at (2,-3) {
    \begin{tikzpicture}[anchorbase,scale=.5]
      % Square
      \draw (0,0) rectangle (4,4);
      % Strands
      \draw [semithick] (0,2) -- (.5,2);
      \draw [semithick] (.5,2) to [out=0,in=180] (1.5,1);
      \draw [semithick] (.5,2) to [out=0,in=180] (1.5,3) -- (2.5,3);
      \draw [semithick] (1.5,3) to [out=0,in=180] (2,2) to [out=0,in=180] (2.5,3);
      \draw [semithick] (1.5,1) -- (2.5,1) to [out=0,in=180] (3.5,2);
      \draw [semithick] (2.5,3) to [out=0,in=180] (3.5,2);
      \draw [semithick] (3.5,2) -- (4,2);
    \end{tikzpicture}
  };
  \node (C1) at (-2,-6) {
    \begin{tikzpicture}[anchorbase,scale=.5]
      % Square
      \draw (0,0) rectangle (4,4);
      % Strands
      \draw [semithick] (0,2) -- (.5,2);
      \draw [semithick] (.5,2) to [out=0,in=180] (1.5,1);
      \draw [semithick] (.5,2) to [out=0,in=180] (1.5,3) -- (2.5,3);
      \draw [semithick] (1.5,1) -- (2.5,1) to [out=0,in=180] (3.5,2);
      \draw [semithick] (2.5,3) to [out=0,in=180] (3.5,2);
      \draw [semithick] (3.5,2) -- (4,2);
     \end{tikzpicture}
  };
  \node (C2) at (2,-6) {
    \begin{tikzpicture}[anchorbase,scale=.5]
      % Square
      \draw (0,0) rectangle (4,4);
      % Strands
      \draw [semithick] (0,2) -- (.5,2);
      \draw [semithick] (.5,2) to [out=0,in=180] (1.5,1);
      \draw [semithick] (.5,2) to [out=0,in=180] (1.5,3) -- (2.5,3);
      \draw [semithick] (1.5,1) -- (2.5,1) to [out=0,in=180] (3.5,2);
      \draw [semithick] (2.5,3) to [out=0,in=180] (3.5,2);
      \draw [semithick] (3.5,2) -- (4,2);
     \end{tikzpicture}
  };
  \node (D) at (0,-9) {
    \begin{tikzpicture}[anchorbase,scale=.5]
      % Square
      \draw (0,0) rectangle (4,4);
      % Strands
      \draw [semithick] (0,2) -- (4,2);
    \end{tikzpicture}
  };
  \draw [<->] (A) -- (B1);
  \draw [<->] (A) -- (B2);
  \draw [<->] (B1) -- (C1);
  \draw [<->] (B2) -- (C2);
  \draw [<->] (C1) -- (D);
  \draw [<->] (C2) -- (D);
\end{tikzpicture}
\quad \text{or} \quad
  \begin{tikzpicture}[anchorbase]
  \node at (0,0) {
\begin{tikzpicture}[decoration={markings, mark=at position 0.6 with {\arrow{>}};  },fill opacity=.2,scale=.15]
\end{tikzpicture}
  };
  \node (A) at (0,0) {
    \begin{tikzpicture}[anchorbase,scale=.5]
      % Square
      \draw (0,0) rectangle (4,4);
      % Strands
      \draw [semithick] (0,2) -- (.5,2);
      \draw [semithick] (.5,2) to [out=0,in=180] (1.5,1);
      \draw [semithick] (.5,2) to [out=0,in=180] (1.5,3) -- (2.5,3);
      \draw [semithick] (1.5,3) to [out=0,in=180] (2.5,1);
      \draw [semithick] (1.5,1) -- (2.5,1) to [out=0,in=180] (3.5,2);
      \draw [semithick] (2.5,3) to [out=0,in=180] (3.5,2);
      \draw [semithick] (3.5,2) -- (4,2);
    \end{tikzpicture}
  };
  \node (B1) at (-2,-3) {
    \begin{tikzpicture}[anchorbase,scale=.5]
      % Square
      \draw (0,0) rectangle (4,4);
      % Strands
      \draw [semithick] (0,2) -- (.5,2);
      \draw [semithick] (.5,2) to [out=0,in=180] (1.5,1);
      \draw [semithick] (.5,2) to [out=0,in=180] (1.5,3) -- (2.5,3);
      \draw [semithick] (1.5,1) to [out=0,in=180] (2,2) to [out=0,in=180] (2.5,1);
      \draw [semithick] (1.5,1) -- (2.5,1) to [out=0,in=180] (3.5,2);
      \draw [semithick] (2.5,3) to [out=0,in=180] (3.5,2);
      \draw [semithick] (3.5,2) -- (4,2);
     \end{tikzpicture}
  };
  \node (B2) at (2,-3) {
    \begin{tikzpicture}[anchorbase,scale=.5]
      % Square
      \draw (0,0) rectangle (4,4);
      % Strands
      \draw [semithick] (0,2) -- (.5,2);
      \draw [semithick] (.5,2) to [out=0,in=180] (1.5,1);
      \draw [semithick] (.5,2) to [out=0,in=180] (1.5,3) -- (2.5,3);
      \draw [semithick] (1.5,3) to [out=0,in=180] (2,2) to [out=0,in=180] (2.5,3);
      \draw [semithick] (1.5,1) -- (2.5,1) to [out=0,in=180] (3.5,2);
      \draw [semithick] (2.5,3) to [out=0,in=180] (3.5,2);
      \draw [semithick] (3.5,2) -- (4,2);
    \end{tikzpicture}
  };
  \node (C1) at (-2,-6) {
    \begin{tikzpicture}[anchorbase,scale=.5]
      % Square
      \draw (0,0) rectangle (4,4);
      % Strands
      \draw [semithick] (0,2) -- (.5,2);
      \draw [semithick] (.5,2) to [out=0,in=180] (1.5,1);
      \draw [semithick] (.5,2) to [out=0,in=180] (1.5,3) -- (2.5,3);
      \draw [semithick] (1.5,1) -- (2.5,1) to [out=0,in=180] (3.5,2);
      \draw [semithick] (2.5,3) to [out=0,in=180] (3.5,2);
      \draw [semithick] (3.5,2) -- (4,2);
     \end{tikzpicture}
  };
  \node (C2) at (2,-6) {
    \begin{tikzpicture}[anchorbase,scale=.5]
      % Square
      \draw (0,0) rectangle (4,4);
      % Strands
      \draw [semithick] (0,2) -- (.5,2);
      \draw [semithick] (.5,2) to [out=0,in=180] (1.5,1);
      \draw [semithick] (.5,2) to [out=0,in=180] (1.5,3) -- (2.5,3);
      \draw [semithick] (1.5,1) -- (2.5,1) to [out=0,in=180] (3.5,2);
      \draw [semithick] (2.5,3) to [out=0,in=180] (3.5,2);
      \draw [semithick] (3.5,2) -- (4,2);
     \end{tikzpicture}
  };
  \node (D) at (0,-9) {
    \begin{tikzpicture}[anchorbase,scale=.5]
      % Square
      \draw (0,0) rectangle (4,4);
      % Strands
      \draw [semithick] (0,2) -- (4,2);
    \end{tikzpicture}
  };
  \draw [<->] (A) -- (B1);
  \draw [<->] (A) -- (B2);
  \draw [<->] (B1) -- (C1);
  \draw [<->] (B2) -- (C2);
  \draw [<->] (C1) -- (D);
  \draw [<->] (C2) -- (D);
\end{tikzpicture}
\end{equation}

Let us now consider the case where $b_4\neq 0$ but $c_2=0$. Around $(u_0,v_0,w_0)$, the $y$-coordinate is controlled by:
\[
  b_2\epsilon_2+d_2\epsilon_1^2+f_2\epsilon_1\epsilon_3+h_2\epsilon_2\epsilon_3+i_2\epsilon_3^2
\]
The situation is similar to the one that lead to~\eqref{eq:twistFork}: we have either contradictory or parallel effects of $h_2\epsilon_2\epsilon_3$ and $f_2\epsilon_1\epsilon_3$ depending on the sign of $\epsilon_2$. One thus gets the following movie move (and analogs of it with other twist/crossing versions):

\begin{equation}
\begin{tikzpicture}[anchorbase,scale=.75]
  \node at (0,0) {
\begin{tikzpicture}[decoration={markings, mark=at position 0.6 with {\arrow{>}};  },fill opacity=.2,scale=.15]
%\draw[dashed] (25,32) rectangle (-25,-32);
%\node[opacity=1] at (0,11) {\bf{Sauron}};
%\node[opacity=1] at (0,-11) {};
\end{tikzpicture}
  };
  \node (A) at (0,0) {
    \begin{tikzpicture}[anchorbase,scale=.5]
      % Square
      \draw (0,0) rectangle (6,4);
      % Strands
      \draw [semithick] (0,2) -- (2,2);
      \draw [semithick] (2,2) to [out=0,in=180] (3,2.5);
      \draw [semithick] (2,2) to [out=0,in=180] (6,1);
      \draw [semithick] (3,2.5) to [out=0,in=180] (6,3);
      \draw [semithick] (3,2.5) to [out=0,in=180] (6,2);
      % Twists
      \node at (1,2) {$\circ$};
      \node at (1,2.3) {\tiny $+$};
    \end{tikzpicture}
  };
  \node (B1) at (-3,-4.5) {
    \begin{tikzpicture}[anchorbase,scale=.5]
      % Square
      \draw (0,0) rectangle (6,4);
      % Strands
      \draw [semithick] (0,2) -- (1.5,2);
      \draw [semithick] (1.5,2) to [out=0,in=180] (2,2.5);
      \draw [semithick] (1.5,2) to [out=0,in=180] (2,1.5);
      % Crossing
      \draw [white, line width=3] (2,1.5) to [out=0,in=180] (3,2.5);
      \draw [semithick] (2,1.5) to [out=0,in=180] (3,2.5);
      \draw [white, line width=3] (2,2.5) to [out=0,in=180] (3,1.5);
      \draw [semithick] (2,2.5) to [out=0,in=180] (3,1.5);
      % Free strand
      \draw [semithick] (3,1.5) to [out=0,in=180] (6,1);
      % Web
      \draw [semithick] (3,2.5) to [out=0,in=180] (6,3);
      \draw [semithick] (3,2.5) to [out=0,in=180] (6,2);
      % Twists
      \node at (2,2.5) {$\circ$};
      \node at (2,2.8) {\tiny $+$};
      \node at (2,1.5) {$\circ$};
      \node at (2,1.2) {\tiny $+$};
    \end{tikzpicture}
  };
  \node (B2) at (3,-3) {
    \begin{tikzpicture}[anchorbase,scale=.5]
      % Square
      \draw (0,0) rectangle (6,4);
      % Strands
      \draw [semithick] (0,2) -- (2,2);
      \draw [semithick] (2,2) to [out=0,in=180] (3,1.5);
      \draw [semithick] (2,2) to [out=0,in=180] (6,3);
      \draw [semithick] (3,1.5) to [out=0,in=180] (6,1);
      \draw [semithick] (3,1.5) to [out=0,in=180] (6,2);
      % Twists
      \node at (1,2) {$\circ$};
      \node at (1,2.3) {\tiny $+$};
    \end{tikzpicture}
  };
  \node (C1) at (-3,-7.5) {
    \begin{tikzpicture}[anchorbase,scale=.5]
      % Square
      \draw (0,0) rectangle (6,4);
      % Strands
      \draw [semithick] (0,2) -- (1.5,2);
      \draw [semithick] (1.5,2) to [out=0,in=180] (2,2.5) -- (3,2.5);
      \draw [semithick] (1.5,2) to [out=0,in=180] (2,1.5) -- (2.5,1.5);
      \draw [semithick] (2.5,1.5) to [out=0,in=180]  (3,2);
      \draw [semithick] (2.5,1.5) to [out=0,in=180] (3,1);
      % Crossing
      \draw [white, line width=3] (3,1) to [out=0,in=180] (4,2);
      \draw [semithick] (3,2) to [out=0,in=180] (4,3);
      \draw [white, line width=3] (3,1) to [out=0,in=180] (4,2);
      \draw [semithick] (3,1) to [out=0,in=180] (4,2);
      \draw [white, line width=3] (3,2.5) to [out=0,in=180] (4,1);
      \draw [semithick] (3,2.5) to [out=0,in=180] (4,1);
      % Free strand
      \draw [semithick] (4,1) to [out=0,in=180] (6,1);
      % Web strands
      \draw [semithick] (4,3) to [out=0,in=180] (6,3);
      \draw [semithick] (4,2) to [out=0,in=180] (6,2);
      % Twists
      \node at (2,2.5) {$\circ$};
      \node at (2,2.8) {\tiny $+$};
      \node at (2,1.5) {$\circ$};
      \node at (2,1.2) {\tiny $+$};
    \end{tikzpicture}
  };
  \node (C2) at (3,-6) {
    \begin{tikzpicture}[anchorbase,scale=.5]
      % Square
      \draw (0,0) rectangle (6,4);
      % Strands
      \draw [semithick] (0,2) -- (1.5,2);
      \draw [semithick] (1.5,2) to [out=0,in=180] (2,1.5);
      \draw [semithick] (1.5,2) to [out=0,in=180] (2,2.5);
      % Crossing
      \draw [white, line width=3] (2,1.5) to [out=0,in=180] (3,2.5);
      \draw [semithick] (2,1.5) to [out=0,in=180] (3,2.5);
      \draw [white, line width=3] (2,2.5) to [out=0,in=180] (3,1.5);
      \draw [semithick] (2,2.5) to [out=0,in=180] (3,1.5);
      % Free strand
      \draw [semithick] (3,2.5) to [out=0,in=180] (6,3);
      % Web
      \draw [semithick] (3,1.5) to [out=0,in=180] (6,1);
      \draw [semithick] (3,1.5) to [out=0,in=180] (6,2);
      % Twists
      \node at (2,1.5) {$\circ$};
      \node at (2,1.2) {\tiny $+$};
      \node at (2,2.5) {$\circ$};
      \node at (2,2.8) {\tiny $+$};
    \end{tikzpicture}
  };
  \node (D1) at (-3,-10.5) {
    \begin{tikzpicture}[anchorbase,scale=.5]
      % Square
      \draw (0,0) rectangle (6,4);
      % Strands
      \draw [semithick] (0,2) -- (1.5,2);
      \draw [semithick] (1.5,2) to [out=0,in=180] (2,3) -- (4,3);
      \draw [semithick] (1.5,2) to [out=0,in=180] (2.5,1.5);
      \draw [semithick] (2.5,1.5) to [out=0,in=180]  (3,2);
      \draw [semithick] (2.5,1.5) to [out=0,in=180] (3,1);
      % Crossings
      \draw [white, line width=3] (3,1) to [out=0,in=180] (4,2);
      \draw [semithick] (3,1) to [out=0,in=180] (4,2);
      \draw [white, line width=3] (3,2) to [out=0,in=180] (4,1);
      \draw [semithick] (3,2) to [out=0,in=180] (4,1);      
      \draw [white, line width=3] (4,1) to [out=0,in=180] (5,2);
      \draw [semithick] (4,1) to [out=0,in=180] (5,2);
      \draw [white, line width=3] (4,2) to [out=0,in=180] (5,3);
      \draw [semithick] (4,2) to [out=0,in=180] (5,3);
      \draw [white, line width=3] (4,3) to [out=0,in=180] (5,1);
      \draw [semithick] (4,3) to [out=0,in=180] (5,1);
      % Free strand
      \draw [semithick] (5,1) to [out=0,in=180] (6,1);
      % Web strands
      \draw [semithick] (5,3) to [out=0,in=180] (6,3);
      \draw [semithick] (5,2) to [out=0,in=180] (6,2);
      % Twists
      \node at (2,3) {$\circ$};
      \node at (2,3.3) {\tiny $+$};
      \node at (3,1) {$\circ$};
      \node at (3,.7) {\tiny $+$};
      \node at (3,2) {$\circ$};
      \node at (3,2.3) {\tiny $+$};
    \end{tikzpicture}
  };
  \node (D2) at (3,-9) {
    \begin{tikzpicture}[anchorbase,scale=.5]
      % Square
      \draw (0,0) rectangle (6,4);
      % Strands
      \draw [semithick] (0,2) -- (1.5,2);
      \draw [semithick] (1.5,2) to [out=0,in=180] (2,1.5) -- (3,1.5);
      \draw [semithick] (1.5,2) to [out=0,in=180] (2,2.5) -- (2.5,2.5);
      \draw [semithick] (2.5,2.5) to [out=0,in=180]  (3,2);
      \draw [semithick] (2.5,2.5) to [out=0,in=180] (3,3);
      % Crossing
      \draw [white, line width=3] (3,1.5) to [out=0,in=180] (4,3);
      \draw [semithick] (3,1.5) to [out=0,in=180] (4,3);
      \draw [white, line width=3] (3,2) to [out=0,in=180] (4,1);
      \draw [semithick] (3,2) to [out=0,in=180] (4,1);
      \draw [white, line width=3] (3,3) to [out=0,in=180] (4,2);
      \draw [semithick] (3,3) to [out=0,in=180] (4,2);
      % Free strand
      \draw [semithick] (4,3) to [out=0,in=180] (6,3);
      % Web strands
      \draw [semithick] (4,1) to [out=0,in=180] (6,1);
      \draw [semithick] (4,2) to [out=0,in=180] (6,2);
      % Twists
      \node at (2,2.5) {$\circ$};
      \node at (2,2.8) {\tiny $+$};
      \node at (2,1.5) {$\circ$};
      \node at (2,1.2) {\tiny $+$};
    \end{tikzpicture}
  };
  \node (E2) at (3,-12) {
    \begin{tikzpicture}[anchorbase,scale=.5]
      % Square
      \draw (0,0) rectangle (6,4);
      % Strands
      \draw [semithick] (0,2) -- (1.5,2);
      \draw [semithick] (1.5,2) to [out=0,in=180] (2,1) -- (4,1);
      \draw [semithick] (1.5,2) to [out=0,in=180] (2.5,2.5);
      \draw [semithick] (2.5,2.5) to [out=0,in=180]  (3,2);
      \draw [semithick] (2.5,2.5) to [out=0,in=180] (3,3);
      % Crossings
      \draw [white, line width=3] (3,2) to [out=0,in=180] (4,3);
      \draw [semithick] (3,2) to [out=0,in=180] (4,3);      
      \draw [white, line width=3] (3,3) to [out=0,in=180] (4,2);
      \draw [semithick] (3,3) to [out=0,in=180] (4,2);
      \draw [white, line width=3] (4,1) to [out=0,in=180] (5,3);
      \draw [semithick] (4,1) to [out=0,in=180] (5,3);
      \draw [white, line width=3] (4,3) to [out=0,in=180] (5,2);
      \draw [semithick] (4,3) to [out=0,in=180] (5,2);
      \draw [white, line width=3] (4,2) to [out=0,in=180] (5,1);
      \draw [semithick] (4,2) to [out=0,in=180] (5,1);
      % Free strand
      \draw [semithick] (5,3) to [out=0,in=180] (6,3);
      % Web strands
      \draw [semithick] (5,1) to [out=0,in=180] (6,1);
      \draw [semithick] (5,2) to [out=0,in=180] (6,2);
      % Twists
      \node at (2,1) {$\circ$};
      \node at (2,.7) {\tiny $+$};
      \node at (3,3) {$\circ$};
      \node at (3,3.3) {\tiny $+$};
      \node at (3,2) {$\circ$};
      \node at (3,1.7) {\tiny $+$};
    \end{tikzpicture}
  };
    \node (F) at (0,-15) {
    \begin{tikzpicture}[anchorbase,scale=.5]
      % Square
      \draw (0,0) rectangle (6,4);
      % Strands
      \draw [semithick] (0,2) -- (1.5,2);
      \draw [semithick] (1.5,2) to [out=0,in=180] (2,1) -- (3,1);
      \draw [semithick] (1.5,2) to [out=0,in=180] (2.5,2.5);
      \draw [semithick] (2.5,2.5) to [out=0,in=180]  (3,2);
      \draw [semithick] (2.5,2.5) to [out=0,in=180] (3,3) -- (4,3);
      % Crossings
      \draw [white, line width=3] (3,1) to [out=0,in=180] (4,2);
      \draw [semithick] (3,1) to [out=0,in=180] (4,2);
      \draw [white, line width=3] (3,2) to [out=0,in=180] (4,1);
      \draw [semithick] (3,2) to [out=0,in=180] (4,1);      
      \draw [white, line width=3] (4,1) to [out=0,in=180] (5,2);
      \draw [semithick] (4,1) to [out=0,in=180] (5,2);
      \draw [white, line width=3] (4,2) to [out=0,in=180] (5,3);
      \draw [semithick] (4,2) to [out=0,in=180] (5,3);
      \draw [white, line width=3] (4,3) to [out=0,in=180] (5,1);
      \draw [semithick] (4,3) to [out=0,in=180] (5,1);
      % Free strand
      \draw [semithick] (5,1) to [out=0,in=180] (6,1);
      % Web strands
      \draw [semithick] (5,3) to [out=0,in=180] (6,3);
      \draw [semithick] (5,2) to [out=0,in=180] (6,2);
      % Twists
      \node at (3,3) {$\circ$};
      \node at (3,3.3) {\tiny $+$};
      \node at (3,1) {$\circ$};
      \node at (3,.7) {\tiny $+$};
      \node at (3,2) {$\circ$};
      \node at (3,2.3) {\tiny $+$};
    \end{tikzpicture}
  };
  \draw [-] (A) -- (B1);
  \draw [-] (A) -- (B2);
  \draw [-] (B1) -- (C1);
  \draw [-] (B2) -- (C2);
  \draw [-] (C1) -- (D1);
  \draw [-] (C2) -- (D2);
  \draw [-] (D1) -- (F);
  \draw [-] (D2) -- (E2);
  \draw [-] (E2) -- (F);
\end{tikzpicture}
\end{equation}

%%%%%%%%%%%%%%%%%%%%%%%%%%%%%%%%%%%%%
\subsection{Main statement with half-twists}
%%%%%%%%%%%%%%%%%%%%%%%%%%%%%%%%%%%%%

\begin{theorem} \label{th:mainHT} 
  Framed foams between oriented framed web-tangles admit the following movie generators, in addition to identity movies over web-tangles (each picture is a representative of a family, obtained by crossing changes, changing half-twists types and signs, orientations, or taking planar symmetries):

\begin{equation}
  % [inline block 6: 247 envs, 170506 chars -> data_tex | \begin{tikzpicture}[anchorbase]     \draw [dashed] (-.7,-.7) rectangle (2,.7);...]

  };
  \draw [-] (A) -- (B1);
  \draw [-] (A) -- (B2);
  \draw [-] (B1) -- (C1);
  \draw [-] (B2) -- (C2);
  \draw [-] (C1) -- (D1);
  \draw [-] (C2) -- (D2);
  \draw [-] (D1) -- (F);
  \draw [-] (D2) -- (E2);
  \draw [-] (E2) -- (F);
\end{tikzpicture}
\]

\end{theorem}

%%%%%%%%%%%%%%%%%%%%%%%%%%%%%%%%%%%%%%%%%
\subsection{Main statement with full twists}
%%%%%%%%%%%%%%%%%%%%%%%%%%%%%%%%%%%%%%%%%%%

By restricting our attention to foams that generically bound webs with full twists as in the statement of Theorem~\ref{th:ReidFT}, one can deduce the following statement from the previous one. Indeed, one simply has to pre/post-compose generators by twisting the framing into the standard position, collapsing $\bullet$ and $\circ$ into {\tiny $\LEFTcircle$}, where the framing takes a full turn. 

\begin{theorem} \label{th:mainFT} 
  Framed foams between oriented framed web-tangles with preferred diagrams admit the following movie generators, in addition to identity movies over web-tangles (each picture is a representative of a family, obtained by crossing changes, changing half-twists types and signs, orientations, or taking planar symmetries):

\begin{equation}
  % [inline block 7: 61 envs, 47948 chars -> data_tex | \begin{tikzpicture}[anchorbase]     \draw [dashed] (-.7,-.7) rectangle (2,.7);...]

  };
  \draw [-] (A) -- (B1);
  \draw [-] (A) -- (B2);
  \draw [-] (B1) -- (C1);
  \draw [-] (B2) -- (C2);
  \draw [-] (C1) -- (D1);
  \draw [-] (C2) -- (D2);
  \draw [-] (D1) -- (F);
  \draw [-] (D2) -- (E2);
  \draw [-] (E2) -- (F);
\end{tikzpicture}
\]

\end{theorem}

\bibliographystyle{plain}

\end{document}